\documentclass[12pt]{amsart} %
\usepackage[pdftex]{xcolor,graphicx}
\usepackage{amsmath,amssymb,amsthm,mathrsfs,ascmac,comment, here, url} 
\usepackage{fullpage, caption}
\usepackage[all]{xy}

\usepackage[pagewise, mathlines]{lineno} 
\usepackage{time}
\usepackage[nameinlink]{cleveref}

\usepackage[format=plain, font=normalsize, labelfont=bf
]{caption} 

\numberwithin{equation}{section}
\usepackage{etoolbox}          

\newcommand*\linenomathpatch[1]{%
  \cspreto{#1}{\linenomath}%
  \cspreto{#1*}{\linenomath}%
  \csappto{end#1}{\endlinenomath}%
  \csappto{end#1*}{\endlinenomath}%
}
\newcommand*\linenomathpatchAMS[1]{%
  \cspreto{#1}{\linenomathAMS}%
  \cspreto{#1*}{\linenomathAMS}%
  \csappto{end#1}{\endlinenomath}%
  \csappto{end#1*}{\endlinenomath}%
}

\expandafter\ifx\linenomath\linenomathWithnumbers 
 \let\linenomathAMS\linenomathWithnumbers
 \patchcmd\linenomathAMS{\advance\postdisplaypenalty\linenopenalty}{}{}{}
\else
  \let\linenomathAMS\linenomathNonumbers
\fi

\linenomathpatch{equation}
\linenomathpatchAMS{gather}
\linenomathpatchAMS{multline}
\linenomathpatchAMS{align}
\linenomathpatchAMS{alignat}
\linenomathpatchAMS{flalign}

\makeatletter
\patchcmd{\mmeasure@}{\measuring@true}{
  \measuring@true
  \ifnum-\linenopenaltypar>\interdisplaylinepenalty
    \advance\interdisplaylinepenalty-\linenopenalty
  \fi
  }{}{}
\makeatother

\newcommand{\mf}[1]{{\mathfrak{#1}}}

\newcommand{\bb}[1]{{\mathbb{#1}}}
\newcommand{\mca}[1]{{\mathcal{#1}}}

\newcommand{\To}{\longrightarrow}

\newcommand{\inj}{\hookrightarrow}
\newcommand{\surj}{\twoheadrightarrow}

\newcommand{\congto}{\overset{\cong}{\to}}

\newcommand{\Z}{\bb{Z}}
\newcommand{\Zp}{\bb{Z}_{p}}
\newcommand{\Q}{\bb{Q}}

\newcommand{\C}{\bb{C}}
\newcommand{\Cp}{\bb{C}_{p}}
\newcommand{\F}{\bb{F}}
\newcommand{\p}{\mf{p}}

\newcommand{\ol}{\overline}

\newcommand{\wt}[1]{{\widetilde{#1}}}
\newcommand{\wh}[1]{{\widehat{#1}}}

\newcommand{\ctext}[1]{\raise0.2ex\hbox{\textcircled{\scriptsize{#1}}}}

\newtheorem{thm}{Theorem}[section]
\newtheorem{lem}[thm]{Lemma}
\newtheorem{cor}[thm]{Corollary}
\newtheorem{prop}[thm]{Proposition}  

\theoremstyle{remark}

\theoremstyle{definition}
\newtheorem{defn}[thm]{Definition}
\newtheorem{rem}[thm]{Remark} 

\newtheorem{eg}[thm]{Examples}

\newtheorem{q}[thm]{Question} 

\newtheorem{def/prop}[thm]{Definition/Proposition}

\newtheorem{thmx}{Theorem} 




\newcommand{\red}{}    


\title[The $p$-adic limits of class numbers in $\Z_p$-towers]{
The $p$-adic limits of class numbers in $\Z_p$-towers} 

\author{
Jun Ueki} 
\email{uekijun46@gmail.com}
\address{Department of Mathematics, Faculty of Science, Ochanomizu University; 
2-1-1 Otsuka, Bunkyo-ku, 112-8610, Tokyo, Japan}

\author{
Hyuga Yoshizaki} 
\email{yoshizaki.hyuga@gmail.com}
\address{Department of Mathematics, 
Faculty of Science and Technology, Tokyo University of Science;
2641, Yamazaki, Noda-shi, 278-8510, Chiba, Japan}

\makeatletter
\@namedef{subjclassname@2020}{
\textup{2020} Mathematics Subject Classification}
\makeatother 

\subjclass[2020]{Primary 
57K10, 11R23, 11R29; 
Secondary 11G20, 57M10, 11S05
} 


\keywords{Weber's class number problem, number field, knot, elliptic curve, 
$p$-adic torsion, Iwasawa $\nu$-invariant, Iwasawa theory, arithmetic topology. 
} 

\begin{document}

\begin{abstract} 
This article discusses variants of Weber's class number problem in the spirit of arithmetic topology
to connect the results of Sinnott--Kisilevsky and Kionke.   
Let $p$ be a prime number.  
We first prove the $p$-adic convergence of class numbers in a $\Zp$-extension of a global field and a similar result in a $\Zp$-cover of a compact 3-manifold. 
Secondly, we establish an explicit formula for the $p$-adic limit of the $p$-power-th cyclic resultants of a polynomial using roots of unity of orders prime to $p$, the $p$-adic logarithm, and the Iwasawa invariants.
Finally, we give thorough investigations of torus knots, twist knots, and elliptic curves; 
we complete the list of the cases with $p$-adic limits being in $\Z$ and find the cases such that the base $p$-class numbers are small and $\nu$'s are arbitrarily large. 
\end{abstract}


\maketitle 



\setcounter{tocdepth}{3}
{\small 
\tableofcontents }

\section{Introduction}

Since the dawn of algebraic number theory, the quest for number fields with trivial class numbers has been of great interest. 
In the early 19th century, C.~\!F.~\!Gauss studied quadratic number fields and asked whether there exist infinitely many real quadratic fields with trivial class numbers, which is still an open question. 
Also in the 19th century, H.~\!Weber studied the class numbers of cyclotomic $\Z/2^n\Z$-extensions of $\Q$ (cf.~\cite{HWeber1886}). 
So-called \emph{Weber's class number problem} asks whether the class numbers of cyclotomic $\Z/p^n\Z$-extensions are trivial for arbitrary prime number $p$. 
After many years of efforts, this problem is expected to be affirmative, 
nevertheless the assertion is verified only when $(p,n)$ is $(2,\leq$$6),$ $(3,\leq$$3),$ $(5,\leq$$2),$ and when $7\leq p\leq 19$, $n=1$ at this moment (cf.~\!\cite{FukudaKomatsuMorisawa2014, Gras2021JNT}). 
 
The second author recently approached Weber's problem using continued fraction expansions and pointed out that the sequence of the class numbers in the cyclotomic $\Z_2$-extension of $\Q$ converges in the ring of $2$-adic integers $\mathbb{Z}_2$ \cite[Theorem 5.3]{Yoshizaki2023JTNB}. 
W.~\!Sinnott also announced in 1985 a similar result for a cyclotomic $\Zp$-extension of a CM field and for the ``minus'' class numbers, and Sang-G.~\!Han established an explicit formula \cite[Theorem 4]{Han1991} by using an analytic argument. 
Their results are specific cases of H.~\!Kisilevsky's theorem over any global field, that is, a finite extension of $\Q$ or $\F_{p'}(x)$, $p'$ being a prime number: 

\setcounter{section}{2} 
\begin{thmx}[\Cref{thm.field}, {\cite[Corollary 2]{Kisilevsky1997PJM}}]
Let $k_{p^\infty}$ be a $\Zp$-extension of a global field $k$. 
Then, the sizes of the class groups ${\rm C}(k_{p^n})$, 
those of the non-$p$-subgroups ${\rm C}(k_{p^n})_{\text{non-}p}$, 
and those of the $l$-torsion subgroups ${\rm C}(k_{p^n})_{(l)}$ for each prime number $l$ 
converge in $\Zp$. 
\end{thmx}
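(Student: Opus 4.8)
The plan is to factor $h_n:=|\mathrm{C}(k_{p^n})|$ as the product of its $p$-part $|\mathrm{C}(k_{p^n})_{(p)}|$ and its prime-to-$p$ part $|\mathrm{C}(k_{p^n})_{\text{non-}p}|$ and to prove $p$-adic convergence of each; the statements for $h_n$ and for the individual $l$-torsion subgroups then follow. Write $\Gamma=\G(k_{p^\infty}/k)\cong\Zp$ and $\Gamma_n=\G(k_{p^n}/k)\cong\Z/p^n\Z$. For the $p$-part I would invoke the Iwasawa class number formula and its function-field counterpart: there exist $\mu,\lambda\geq0$ and $\nu\in\Z$ with $v_p\big(|\mathrm{C}(k_{p^n})_{(p)}|\big)=\mu p^n+\lambda n+\nu$ for all $n\gg0$. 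Hence if $(\mu,\lambda)\neq(0,0)$ this valuation tends to $\infty$ and the $p$-part tends to $0$ in $\Zp$, while if $\mu=\lambda=0$ it is eventually the constant $p^\nu$; in both cases it converges. This also settles the prime $l=p$ of the $l$-torsion assertion.

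The core of the argument is a lemma on the $p$-adic convergence of $p$-power cyclic resultants $r_n:=\prod_{\zeta^{p^n}=1}\Delta(\zeta)=\Res(T^{p^n}-1,\Delta)$ attached to a fixed $\Delta(T)\in\Z[T]$. Writing $T^{p^n}-1=\prod_{j=0}^{n}\Phi_{p^j}(T)$ gives $r_n=\prod_{j=0}^{n}\Res(\Phi_{p^j},\Delta)$ with $\Res(\Phi_{p^j},\Delta)=\Nr_{\Q(\zeta_{p^j})/\Q}\big(\Delta(\zeta_{p^j})\big)$, so convergence of $r_n$ reduces to showing that these factors tend to $1$ as $j\to\infty$. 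Since $\zeta_{p^j}\to1$ $p$-adically along the totally ramified cyclotomic tower, one has $\Delta(\zeta_{p^j})=\Delta(1)\,u_j$ with $u_j\to1$, and a Fermat--Euler estimate for $\Delta(1)^{\varphi(p^j)}$ together with control of the $\varphi(p^j)$-fold conjugate product of $u_j$ shows the factor converges to $1$ precisely when $p\nmid\Delta(1)$ (the case $p\mid\Delta(1)$ being absorbed into the $p$-part). This is the mechanism, refined in the companion cyclic-resultant formula, and it applies verbatim to the constant $\Zp$-extension of a function field $k=\F_q(C)$: there $h_n=\prod_{i}(1-\alpha_i^{p^n})=\Res(T^{p^n}-1,\Delta)$ for $\Delta$ the characteristic polynomial of the $q$-Frobenius, so $h_n$ converges in $\Zp$.

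To reach the individual $l$-torsion statement for $l\neq p$ and the full prime-to-$p$ statement, I would localise this analysis at each prime: $|\mathrm{C}(k_{p^n})_{(l)}|=l^{f_n}$ is a unit in $\Zp$, and running the resultant factorisation $l$-adically shows $l^{f_n}$ is $p$-adically Cauchy, hence converges in $\Zp^\times$; taking the product over $l\neq p$ yields the convergence of $|\mathrm{C}(k_{p^n})_{\text{non-}p}|$, compatibly with $\lim_n h_n=\big(\lim_n|\mathrm{C}(k_{p^n})_{(p)}|\big)\big(\lim_n|\mathrm{C}(k_{p^n})_{\text{non-}p}|\big)$.

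The step I expect to be the main obstacle is the prime-to-$p$ part in the number-field case, where no product formula as clean as $\prod_i(1-\alpha_i^{p^n})$ is available for the full class number. Here the plan is to use that $|\Gamma_n|=p^n$ acts invertibly on $\mathrm{C}(k_{p^n})_{(l)}$ to split it into its $\Gamma_n$-invariants and a complement: Chevalley's ambiguous class number formula pins down the invariants, whose prime-to-$p$ correction factor is trivial because all ramification indices in a $\Zp$-extension are $p$-powers, so $|\mathrm{C}(k_{p^n})_{(l)}^{\Gamma_n}|$ is essentially constant; the complement is governed by the $l$-adic characteristic polynomial of the $\Gamma$-action, to which the cyclic-resultant estimate must be re-applied. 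The delicate point---and the crux of the whole theorem---is to show these orders are genuinely $p$-adically Cauchy and that the limit is a unit rather than $0$, i.e.\ that the prime-to-$p$ part does not $p$-adically escape; this is exactly where quantitative control of $p$-power cyclic resultants is indispensable.
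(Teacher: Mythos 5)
Your proposal works, essentially as the paper's Theorem~\ref{thm.res.conv} does, for \emph{constant} $\Zp$-extensions of function fields, where $|{\rm C}(k_{p^n})|=|{\rm Res}(t^{p^n}-1,F_k(t))|$ holds for a fixed Frobenius polynomial --- the paper itself notes that the cyclic-resultant theorem yields an alternative proof in that situation. But the case you flag as the ``main obstacle'' --- the prime-to-$p$ part for number fields (and likewise for geometric $\Zp$-extensions of function fields, where the genus grows and no fixed polynomial exists) --- is a genuine gap, not a deferrable technicality, and your proposed repair does not close it: for $l\neq p$ the groups ${\rm C}(k_{p^n})_{(l)}$ are not governed by any finitely generated module with an ``$l$-adic characteristic polynomial of the $\Gamma$-action'' to which a resultant estimate could be applied (even the boundedness of the $l$-parts is Washington's theorem, known only for cyclotomic $\Zp$-extensions of abelian fields, and the theorem here makes no such assumption). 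The paper closes this by a short self-contained argument you are missing: the relative norm ${\rm Nr}\colon{\rm C}(k_{p^{n}})\to{\rm C}(k_{p^{n-1}})$ is surjective on $l$-parts, since ${\rm Nr}\circ\iota$ is the $p$-th power map, invertible on prime-to-$p$ torsion; hence $|({\rm Ker}\,{\rm Nr})_{(l)}|=|{\rm C}(k_{p^{n}})_{(l)}|/|{\rm C}(k_{p^{n-1}})_{(l)}|$, and an orbit count for $G={\rm Gal}(k_{p^{n}}/k)\cong\Z/p^{n}\Z$ acting on $({\rm Ker}\,{\rm Nr})_{(l)}$ shows every nontrivial orbit has size exactly $p^{n}$ (a class $[\mf{a}]$ fixed by $\sigma^{p^{n-1}}$ satisfies $[\mf{a}]^p=[\prod_{i=0}^{p-1}\mf{a}^{\tau^i}]=1$ in the kernel, so $[\mf{a}]=1$ as $l\neq p$), giving the congruence $|{\rm C}(k_{p^{n}})_{(l)}|/|{\rm C}(k_{p^{n-1}})_{(l)}|\equiv 1 \bmod p^{n}$. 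That Cauchy estimate is the whole theorem, and it uses no Chevalley formula, no Iwasawa modules, and no resultants.

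Three secondary points. First, your $p$-part argument via the Iwasawa formula $\mu p^n+\lambda n+\nu$ is invalid for geometric $\Zp$-extensions of function fields: Gold--Kisilevsky showed the $p$-part there can grow arbitrarily fast, so no such formula holds; the correct input is the class-field-theoretic divisibility $|{\rm C}(k_{p^{n-1})}|\,\big|\,|{\rm C}(k_{p^{n}})|$ for $n\gg0$, which makes the $p$-exponent eventually non-decreasing, hence eventually constant or tending to infinity --- without some monotonicity the sequence $p^{e_n}$ could oscillate and fail to converge. Second, your ``crux'' worry that the non-$p$ limit might be $0$ is vacuous: every term is a $p$-adic unit and $\Zp^{\times}$ is closed in $\Zp$, so once the sequence is Cauchy its limit is automatically a unit. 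Third, deducing the non-$p$ statement by ``taking the product over $l\neq p$'' of the $l$-wise limits needs care, since infinitely many $l$ may occur as $n$ varies; the paper instead runs the same orbit count on the full prime-to-$p$ kernel at once, obtaining the congruence for $|{\rm C}(k_{p^n})_{\text{non-}p}|$ directly.
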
 

The growth of $p$-torsions has been extensively studied in the context of Iwasawa theory. This theorem defines a numerical invariant, say, \emph{the $p$-adic class number} $\lim_{n\to \infty}|{\rm C}(k_{p^n})_{\text{non-}p}|$ of a $\Zp$-extension with any $p$-torsion growth. 

It is said that Gauss's proof of the quadratic reciprocity law using Gauss sums is based on his insight into the analogy between knots and prime numbers. 
In addition, the analogy between the Alexander--Fox theory for $\Z$-covers and the Iwasawa theory for $\Zp$-extensions has played an important role since the 1960s (cf.~\cite{Mazur1963, Morishita2012}). 
A $p$-adic refinement of Alexander--Fox's theory is of its self-interests, as well as applies to the study of profinite rigidity (cf.~\cite{Ueki5, Ueki6, YiLiu2023Invent}). 
In this view, we establish the Iwasawa-type formula and an analogue of \Cref{thm.field} for unbranched $\Zp$-cover of 3-manifolds. 

\setcounter{section}{3}\setcounter{thm}{0}
\begin{thmx}
Let $(X_{p^n}\to X)_n$ be a $\Zp$-cover of a compact connected 3-manifold $X$. 
Then, 

\noindent 
(1) (\Cref{prop.3mfd.Iwasawa}). There exist some $\lambda,\mu\in \Z_{\geq 0}$ and $\nu \in \Z$ such that for every $n\gg 0$, the size of the $p$-torsion subgroup satisfies  
\[|H_1(X_{p^n})_{(p)}|=p^{\lambda n +\mu p^m +\nu}.\]
(2) (\Cref{thm.3mfd}). The sizes of the torsion subgroups $H_1(X_{p^n})_{\rm tor}$, 
those of the non-$p$ torsion subgroups $H_1(X_{p^n})_{\text{non-}p}$,
and those of the $l$-torsion subgroups $H_1(X_{p^n})_{(l)}$ for each prime number $l$,   
of the 1st homology groups converge in $\Zp$. 
\end{thmx} 
By S.~\!Kionke's theorem \cite[Theorem 1.1 (ii)]{Kionke2020JLMS} and the Poincar\'e duality, 
the $p$-adic limit value $\lim_{n\to \infty} |H_1(X_{p^n})_{\text{non-}p}|$ coincides with Kionke's \emph{$p$-adic torsion}. 
In Sections 2 and 3, we stick to the homological argument and give proofs to these theorems in a parallel manner. 
Afterward, in Section 4, we state a general proposition and discuss alternative proofs. 

In several contexts, the size of the $n$-th layer is calculated by the $n$-th cyclic resultant ${\rm Res}(t^n-1,f(t))=\prod_{\zeta^n=1}f(\zeta)$ 
of a certain polynomial $0\neq f(t) \in \Z[t]$. 
In order to pursue numerical studies, 
we establish the following theorems on the $p$-adic limits of cyclic resultants, 
which are detailed versions of {\cite[Proposition 2]{Kisilevsky1997PJM}}. 
In the proof, we invoke an elementary $p$-adic number theory and the class field theory with modulus. 
Let $\C_p$ denote the $p$-adic completion of an algebraic closure of the $p$-adic numbers $\Q_p$ and fix an embedding $\ol{\Q}\inj \C_p$.

\setcounter{section}{4}\setcounter{thm}{2}
\begin{thmx}[\Cref{thm.res.conv}]
Let $0\neq f(t)\in \Z[t]$. Then, the $p$-power-th cyclic resultants 
${\rm Res}(t^{p^n}-1,f(t))$ converge in $\Zp$. 
The limit value is zero if and only if $p\mid f(1)$. 
In any case, if ${\rm Res}(t^{p^n}-1,f(t))$ $\neq 0$ for any $n$, then  
the non-$p$-parts of ${\rm Res}(t^{p^n}-1,f(t))$ 
converge to a non-zero value in $\Zp$. 
For each prime number $l$, similar assertions for the $l$-parts of ${\rm Res}(t^{p^n}-1,f(t))$ hold. 
\end{thmx}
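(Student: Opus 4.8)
The plan is to reduce the computation of $R_n := \Res(t^{p^n}-1, f(t)) = \prod_{\z^{p^n}=1} f(\z)$ to the behaviour of a single $p$-adic power series on $p$-power roots of unity. First I would substitute $t = 1+T$, so that $R_n = \prod_{(1+T)^{p^n}=1} f(1+T)$, and apply the $p$-adic Weierstrass preparation theorem to $F(T) := f(1+T) \in \Zp[[T]]$ (legitimate since $f \neq 0$). Writing $F(T) = p^{\mu} P(T) U(T)$ with $P$ distinguished of degree $\lambda$ and $U \in \Zp[[T]]^{\times}$, and using multiplicativity of the resultant together with $\#\{\z : \z^{p^n}=1\} = p^n$, I obtain
\[
R_n = \pm\, p^{\mu p^n}\cdot \Res\bigl((1+T)^{p^n}-1,\ P(T)\bigr)\cdot \prod_{\z^{p^n}=1} U(\z-1),
\]
whose three factors can be analysed separately.

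Second, I would dispose of the first two factors. Since $F(0) = f(1)$ and $U(0)$ is a unit, $v_p(f(1)) = \mu + v_p(P(0))$, so $p \mid f(1)$ if and only if $(\mu,\lambda) \neq (0,0)$, i.e. if and only if $F$ is a nonunit in $\Zp[[T]]$. The factor $p^{\mu p^n}$ tends to $0$ when $\mu \geq 1$ and equals $1$ when $\mu = 0$; the middle factor equals $\prod_{P(\beta)=0}\bigl((1+\beta)^{p^n}-1\bigr)$, and since every root $\beta$ of a distinguished polynomial satisfies $v_p(\beta) > 0$, each $(1+\beta)^{p^n}-1 \to 0$, so this factor tends to $0$ when $\lambda \geq 1$ and is the empty product $1$ when $\lambda = 0$. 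As the remaining factor is a product of units, this already proves $R_n \to 0$ whenever $p \mid f(1)$, and reduces the whole theorem to the unit factor in the case $p \nmid f(1)$, where $F = U$.

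Third — and this is the technical heart — I would prove that $\prod_{\z^{p^n}=1} U(\z-1)$ converges to a nonzero element of $\Zp^{\times}$. Writing $U(T) = U(0)V(T)$ with $V \in 1 + T\Zp[[T]]$, the Teichmüller computation $U(0)^{p^n} \to \omega(U(0))$ handles the constant, and it remains to control $\prod_{\z^{p^n}=1} V(\z-1)$ by $p$-adic logarithms. The engine is the elementary power-sum identity
\[
S_{n,k} := \sum_{\z^{p^n}=1}(\z-1)^k = (-1)^k p^n \qquad (1 \leq k < p^n),
\]
obtained by expanding $(\z-1)^k$ and using $\sum_{\z^{p^n}=1}\z^i = p^n$ or $0$ according as $p^n \mid i$ or not. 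Feeding this into $\log V(T) = \sum_k b_k T^k$ and passing to the successive quotients $a_n = \prod_{\mathrm{ord}\,\z=p^n}V(\z-1)$ gives $\sum_{k<p^{n-1}}b_k(-1)^k p^{n-1}(p-1)$ as the head contribution to $\log a_n$, whence $v_p(a_n-1) \geq n-1$ and the partial products are $p$-adically Cauchy. The main obstacle is the tail $k \geq p^{n-1}$, where $S_{n,k}$ acquires binomial corrections and the $b_k$ carry denominators $v_p(b_k) \geq -v_p(k)$. Here I would exploit that in the unit case every reciprocal root $\delta_i = (\alpha_i-1)^{-1}$ of $U$ satisfies $v_p(\delta_i) \geq 0$ (the roots $\alpha_i \equiv 1$ of $f$ having been absorbed into $P$), so that $b_k = -\tfrac1k\sum_i \delta_i^{\,k}$ loses nothing beyond $1/k$, and then use the different/trace estimate for $\Qp(\z_{p^n})/\Qp$ to force the tail to vanish $p$-adically. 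Making this bound uniform in $n$ is the step I expect to require the most care.

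Finally, for the prime-to-$p$ statements I would argue as follows. For a fixed prime $l \neq p$, a root of $f$ modulo a prime above $l$ can be a primitive $p^{n}$-th root of unity in $\ol{\F_l}$ only if $p^{n} \mid l^{m}-1$ for some $m \leq \deg f$; as this bounds $n$, one gets $v_l(\Res(\Phi_{p^{n}},f)) = 0$ for all large $n$, so $v_l(R_n)$ stabilises and the $l$-part is eventually constant, hence convergent in $\Zp$. For the non-$p$-part when $R_n \neq 0$ for all $n$, I would combine the limit of the unit factor with the per-root limits $\bigl((1+\beta)^{p^n}-1\bigr)/p^{n} \to \log(1+\beta)$ arising from the middle factor (the hypothesis $R_n \neq 0$ ensuring $1+\beta$ is never a root of unity, so $\log(1+\beta) \neq 0$); dividing out the exact power of $p$ then yields a nonzero $p$-adic limit for the non-$p$-part.
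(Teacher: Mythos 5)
Your structural reduction is sound and genuinely different from the paper's route: the Weierstrass preparation $f(1+T)=p^{\mu}P(T)U(T)$, the factorization $R_n=\pm\,p^{\mu p^n}\prod_j\bigl((1+\beta_j)^{p^n}-1\bigr)\prod_{\zeta^{p^n}=1}U(\zeta-1)$, the equivalence $p\mid f(1)\iff(\mu,\lambda)\neq(0,0)$, and the non-$p$-part bookkeeping via $\bigl((1+\beta)^{p^n}-1\bigr)/p^n\to\log(1+\beta)$ (with $R_n\neq 0$ for all $n$ correctly ruling out $1+\beta_j\in\mu_{p^\infty}$) are all fine. But the technical heart --- convergence of the unit factor --- is exactly where the proposal has a genuine gap, which you half-concede. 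Even your ``head'' estimate fails as stated: from $\log a_n=\sum_k b_kT_{n,k}$ with $T_{n,k}=(-1)^kp^{n-1}(p-1)$ for $k<p^{n-1}$ and only $v_p(b_k)\geq -v_p(k)$, the term with $k=p^{n-2}$ can have valuation as low as $(n-1)-(n-2)=1$ (for $V(T)=1+T$ one has $b_k=\pm 1/k$ exactly), so the head alone does not yield $v_p(a_n-1)\geq n-1$; the true bound arises from cancellation between your head and tail, which is precisely the part left open. In the tail, the crude bound $v_p(b_kS_{n,k})\geq n-v_p(k)$ has no decay in $k$, and the promised trace/different estimate is never carried out. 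So the central convergence claim is unproven as written.

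The good news is that your own reduction admits a one-line repair that is the local shadow of the paper's argument. Since the $\Qp$-conjugates of $\zeta_{p^n}$ are exactly the primitive $p^n$-th roots of unity and $U\in\Zp[[T]]$ converges on the maximal ideal of the complete ring $\O_L=\Z_p[\zeta_{p^n}]$, your successive quotient is a norm of a unit: $a_n=\Nr_{\Qp(\zeta_{p^n})/\Qp}\bigl(U(\zeta_{p^n}-1)\bigr)$ with $U(\zeta_{p^n}-1)\in\O_L^{\times}$. Local class field theory for the totally ramified extension $\Qp(\zeta_{p^n})/\Qp$ of conductor $p^n$ gives $\Nr_{L/\Qp}(\O_L^{\times})=1+p^n\Zp$, hence $a_n\equiv 1\ \mathrm{mod}\ p^n$ and Cauchyness at once. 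The paper obtains the same congruence globally in one stroke, applying Artin reciprocity with modulus $p^n\infty$ (Lemma \ref{lem.GCFT}) to $\Nr_{\Q(\zeta_{p^n})/\Q}f(\zeta_{p^n})$, with no preparation theorem, and treats the non-$p$ and $l$-parts by stripping the $(1-\zeta_{p^n})$-component; your decomposition buys the explicit shape of the limit (close to the paper's Theorem \ref{thm.res}) at the cost of this extra input. Finally, your $l$-part argument has a small hole: if $l$ divides the content of $f$ (say $f=lg$), then $f$ reduces to $0$ in $\F_l[t]$, $v_l(R_n)\geq p^n$ grows, and the $l$-part is \emph{not} eventually constant; it still converges, since $l^{p^n}\to\omega(l)$ in $\Zp$, but that case must be split off before your stabilization argument applies.
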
 

\begin{thmx}[\Cref{thm.res}, a short version] 
Suppose $p\nmid f(t)$. 
Write $f(t)=a_0\prod_i (t-\alpha_i)$ in $\ol{\Q}[t]$ and note that $|a_0\prod_{|\alpha_i|_p>1} \alpha_i|_p=1$. 
Let $\xi$ and $\zeta_i$ denote the unique roots of unity of order prime to $p$ 
satisfying $|a_0\prod_{|\alpha_j|_p>1} \alpha_j -\xi|_p<1$ and $|\alpha_i-\zeta_i|_p<1$ for each $i$ with $|\alpha_i|_p=1$. 
Then 
\[\lim_{n\to \infty} {\rm Res}(t^{p^n}-1,f(t)) =
(-1)^{p\, {\rm deg}f +\#\{i\, \mid\, |\alpha_i|_p<1\}
} \xi \prod_{i;\,|\alpha_i|_p=1} (\zeta_i-1)\]
holds in $\Zp$. 
In addition, the non-$p$ part of ${\rm Res}(t^{p^n}-1,f(t))$ converges to 
\[(-1)^{p\, {\rm deg}f +\#\{i\, \mid\, |\alpha_i|_p<1\}
} \xi\, \bigl( \prod_{\substack{i;\, |\alpha_i|_p=1,\\ \ |\alpha_i-1|_p=1}} (\zeta_i-1)\bigr) \, p^{-\nu} \prod_{\substack{i;\, |\alpha_i|_p=1,\\ \  |\alpha_i-1|_p<1}} \log \alpha_i\]
in $\Zp$, where $\log$ denotes the $p$-adic logarithm and 
$\nu$ $\in \Z\cup\{\infty\}$ is Iwasawa's invariant defined by 
$p^{-\nu}=\prod_{i;|\alpha_i-1|_p<1} |\log \alpha_i|_p$. 
If all $\alpha_i$'s with $|\alpha_i-1|_p<1$ are sufficiently close to 1, 
then $p^\nu=|f(1)|_p^{-1}$ holds. 
\end{thmx}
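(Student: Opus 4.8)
The plan is to evaluate the resultant through its factorization over $\Cp$ and to track three types of roots separately. Writing $f(t)=a_0\prod_i(t-\alpha_i)$, I would use
\[\Res(t^{p^n}-1,f(t))=(-1)^{p^n\deg f}\,a_0^{p^n}\prod_i(\alpha_i^{p^n}-1),\]
and partition the roots according to whether $|\alpha_i|_p$ is $>1$, $=1$, or $<1$. Since $(-1)^{p^n\deg f}=(-1)^{p\deg f}$ for every $n\ge1$, the sign is immediate. For $|\alpha_i|_p<1$ one has $\alpha_i^{p^n}\to0$, so $\alpha_i^{p^n}-1\to-1$ and this block contributes $(-1)^{\#\{i\,\mid\,|\alpha_i|_p<1\}}$. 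Convergence of the whole sequence may be taken for granted from Theorem \ref{thm.res.conv}; the real task is to identify the limit.

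Next I would treat the leading coefficient together with the large roots. Setting $A=a_0\prod_{|\alpha_i|_p>1}\alpha_i$, so that $|A|_p=1$, I factor
\[a_0^{p^n}\prod_{|\alpha_i|_p>1}(\alpha_i^{p^n}-1)=A^{p^n}\prod_{|\alpha_i|_p>1}(1-\alpha_i^{-p^n}),\]
where the product tends to $1$ because $|\alpha_i^{-1}|_p<1$. The key observation is that $A$ reduces into $\F_p$: reducing the factorization of $f$ modulo $\m$ (the maximal ideal of the valuation ring of $\Cp$) gives $\ol f(t)=(-1)^{\#\{|\alpha_i|_p>1\}}\,\ol A\,t^{\#\{|\alpha_i|_p<1\}}\prod_{|\alpha_i|_p=1}(t-\ol{\alpha_i})$, and comparing leading coefficients forces $\ol A\in\F_p^\times$. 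Hence its Teichm\"uller representative $\xi$ satisfies $\xi^p=\xi$, so $A^{p^n}=\xi\,\langle A\rangle^{p^n}\to\xi$ since $\langle A\rangle\in1+\m$. This block therefore contributes exactly $\xi$.

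The heart of the argument is the block $|\alpha_i|_p=1$, which I split using $\zeta_i=\omega(\alpha_i)$ into the part $|\alpha_i-1|_p=1$ (where $\zeta_i\ne1$) and the part $|\alpha_i-1|_p<1$ (where $\zeta_i=1$). Because $\ol f\in\F_p[t]$, the multiset of its nonzero roots is stable under Frobenius $x\mapsto x^p$; lifting, $\{\zeta_i^{p^n}\}=\{\zeta_i\}$ as multisets, so the a priori non-convergent Teichm\"uller factors conspire: $\prod(\zeta_i^{p^n}-1)=\prod(\zeta_i-1)$ for every $n$. Writing $\alpha_i=\zeta_i\langle\alpha_i\rangle$ and using $\langle\alpha_i\rangle^{p^n}\to1$, the part with $\zeta_i\ne1$ converges to $\prod_{|\alpha_i-1|_p=1}(\zeta_i-1)$, while for $\zeta_i=1$ each factor $\alpha_i^{p^n}-1\to0$; together this yields the full-limit formula. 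For the non-$p$ part I would feed in the $p$-adic logarithm: when $\alpha_i\in1+\m$, the estimate $\alpha_i^{p^n}-1=p^n\log\alpha_i\,(1+o(1))$ gives $v_p(\alpha_i^{p^n}-1)=n+v_p(\log\alpha_i)$ for large $n$, so the entire $p$-part comes from this sub-block, with $|\Res|_p=p^{-\lambda n-\nu}$ where $\lambda=\#\{i\mid|\alpha_i-1|_p<1\}$ and $\nu=\sum v_p(\log\alpha_i)$. Multiplying by $|\Res|_p$ cancels $p^{\lambda n}$ and leaves the unit $p^{-\nu}\prod\log\alpha_i$, producing the stated non-$p$ limit.

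Finally, for the closeness claim I would compute $v_p(f(1))$ directly from $f(1)=a_0\prod_i(1-\alpha_i)$: the large roots contribute $v_p(a_0)+\sum_{|\alpha_i|_p>1}v_p(\alpha_i)=v_p(A)=0$, the blocks with $|\alpha_i-1|_p=1$ or $|\alpha_i|_p<1$ contribute $0$, and the near-$1$ roots contribute $\sum v_p(\alpha_i-1)$, so $v_p(f(1))=\sum_{|\alpha_i-1|_p<1}v_p(\alpha_i-1)$. Under the hypothesis $|\alpha_i-1|_p<p^{-1/(p-1)}$ the logarithm is norm-preserving, $v_p(\log\alpha_i)=v_p(\alpha_i-1)$, whence $\nu=v_p(f(1))$ and $p^\nu=|f(1)|_p^{-1}$. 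I expect the main obstacle to be the block $|\alpha_i|_p=1$: one must simultaneously control the Frobenius permutation of the Teichm\"uller representatives (so that non-convergent individual factors assemble into a convergent product) and the precise logarithmic asymptotics of the roots lying in $1+\m$, which is exactly what forces the appearance of $\log\alpha_i$ and the invariant $\nu$.
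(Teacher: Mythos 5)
Your proposal is correct, and at the crucial step it takes a genuinely different route from the paper's. The paper evaluates the oscillating factor $\xi^{p^n}\prod_i(\zeta_i^{p^n}-1)$ by a soft argument: choosing $m$ prime to $p$ with $\xi^m=\zeta_i^m=1$, the sequence is periodic in $n$, and it converges by Theorem \ref{thm.res.conv} (whose proof rests on the Artin reciprocity law), so it must be constant, equal to $\xi\prod_i(\zeta_i-1)$. You instead prove the constancy directly and algebraically: reducing the factorization of $f$ modulo the maximal ideal of the valuation ring shows on one hand that the leading coefficient forces $\overline{A}\in\F_p^\times$ (one can also see $A\in\Zp^\times$ from Galois-stability of the set of large roots), so $\xi^p=\xi$ and $A^{p^n}\to\xi$; and on the other hand that the nonzero roots of $\overline{f}\in\F_p[t]$ form a Frobenius-stable multiset, whence $\{\zeta_i^{p^n}\}=\{\zeta_i\}$ and $\prod_i(\zeta_i^{p^n}-1)=\prod_i(\zeta_i-1)$ for every $n$. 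What this buys you is independence from class field theory: your identification of the limit in fact re-proves the convergence you chose to quote from Theorem \ref{thm.res.conv}, since each block converges once the root-of-unity part is literally constant, whereas the paper's shortcut is quicker but imports the CFT-based convergence theorem as an essential ingredient rather than a convenience. The remainder of your argument --- the asymptotics $v_p(\alpha_i^{p^n}-1)=n+v_p(\log\alpha_i)$ giving $|{\rm Res}|_p=p^{-\lambda n-\nu}$ for $n\gg0$, and the valuation count $v_p(f(1))=\sum_{|\alpha_i-1|_p<1}v_p(\alpha_i-1)$ under the closeness hypothesis $|\alpha_i-1|_p<p^{-1/(p-1)}$ --- parallels the paper's use of Lemma \ref{lem.converge}(2),(3), and you make explicit the computation of $v_p(f(1))$ that the paper leaves implicit. (One shared gloss: if some $\alpha_i$ is a nontrivial $p$-power root of unity, then $\log\alpha_i=0$, the resultants eventually vanish, and the non-$p$-part statement degenerates; the paper is equally silent on this case.)
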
 

In the cases of $\Zp$-covers of knots, Fox--Weber's formula asserts that the cyclic resultants of the Alexander polynomials coincide with the sizes of torsion subgroups of the 1st homology groups. 
We calculate the $p$-adic limits of $|H_1(X_{p^n})_{\rm tor}|$ for the $\Zp$-covers of torus knots $T_{a,b}$ and twist knots $J(2,2m)$ to establish Propositions \ref{prop.torusknots}, \ref{prop.twistknots}, \ref{prop.Atiyah}, 
\emph{completing the table of the cases with 
the $p$-adic limits being in $\Z$.}  
Moreover, we give a systematic study of the Iwasawa $\nu$-invariants and answer the following question (Propositions \ref{prop.nu>0}, \ref{prop.nu.knot}):
\emph{
Find $\Zp$-covers $(X_{ep^n}\to X_e)_n$ with $e\in \Z_{>0}$ 
of twist knots $J(2,2n)$ such that the base $p$-class numbers $|H_1(X_e)_{(p)}|$ are small and $\nu$'s are arbitrary large.} 
In Subsection \ref{ss.rem}, 
we discuss several possible analogues of Weber's problem for knots; 
we remark Livingston's results in \cite{Livingston2002GT} and 
point out further problems in view of the Sato--Tate conjecture.  

In the cases of constant $\Zp$-extensions of function fields, the cyclic resultants of the Frobenius polynomials coincide with the sizes of the degree zero divisor class groups. In Section \ref{s.ac}, we recollect basic facts of function fields, 
state an analogue of Fox--Weber's formula for constant extensions of function fields (\Cref{FoxWeberFF}), and study elliptic curves over finite fields. 
We point out conditions for the $p$-adic limit value 
being 0 and 1 using the notions of supersingular primes and anomalous primes, 
as well as \emph{complete the list of the cases with the $p$-adic limits being in $\Z$} (\Cref{prop.EC}, \ref{prop.Atiyah.elliptic}). 
We also give a systematic study of the Iwasawa $\nu$-invariant and answer the following question (Propositions \ref{prop.nu>0.elliptic}, \ref{prop.classify.elliptic}): 
\emph{
Find constant $\Zp$-extensions $(k_{ep^n}/k_e)_n$ with $e\in \Z_{>0}$ of the function field of elliptic curves over $\F_l$ such that the base $p$-class numbers $|{\rm Cl}^0(k_e)_{(p)}|$ are small and $\nu$'s are arbitrarily large.} 

Note that we have intentionally kept our materials to the very basic, such as torus knots, twist knots, and elliptic curves, to raise questions in a broad scope. 
Our results in this article were initially announced by the authors at several conferences in 2021--2022. 
This article contains a detailed revisiting of Kisilevsky's short article \cite{Kisilevsky1997PJM}.  
Our numerical study of the $p$-adic limits gives explicit examples of 
Kionke's $p$-adic torsions introduced in \cite{Kionke2020JLMS}. 
Recent related works are due to A.~\!Gothandaraman \cite{GAsvin2023ANT} and M.~\!Ozaki \cite{Ozaki-padiclimit} (See Remarks \ref{rem.Asvin} and \ref{rem.Ozaki}). 
In addition, C.~Deninger points out that there would exist a common generalization of our work and his \cite{Deninger2020GGD}. 

\subsection*{Acknowledgments} 
The authors would like to express their sincere gratitude to 
Cristopher Deninger, 
Yoshinosuke Hirakawa, Teruhisa Kadokami, Tomokazu Kashio, 
Hershy Kisilevsky, 
Satoshi Kumabe, 
Moemi Matsumoto, 
Daichi Matsuzuki, 
Tomoki Mihara, 
Yasushi Mizusawa, 
Manabu Ozaki, Makoto Sakuma, 
Shin-ichiro Seki, 
Jordan Schettler, 
and the anonymous referee of the journal  
for useful information and fruitful conversations. 
We dedicate this article to Toshie Takata. 
The first and second authors have been partially supported by 
JSPS KAKENHI Grant Number JP19K14538 and 22J10004 respectively. 

\setcounter{section}{1}
\section{Global fields} 

A number field is a finite extension of $\Q$. 
A function field is a finite extension of the rational function field $\F_{p'}(x)$ of one variable over a finite field $\F_{p'}$, $p'$ being a prime number. 
A global field is a number field or a function field. 
For a global field $k$, let ${\rm C}(k)$ denote the ideal class group ${\rm Cl}(k)$ if $k$ is a number field, and 
the degree-zero divisor class group ${\rm Cl}^0(k)$ if $k$ is a function field. 
Note that ${\rm C}(k)$ is always a finite group. We regard ${\rm C}(k)$ as a multiplicative group. 
For any finite abelian group $G$ and a prime number $l$, let $G_{(l)}$ and $G_{\text{non-}p}$ denote the $l$-torsion subgroup and non-$p$ torsion subgroup of $G$ respectively.  
The size of a finite set $X$ is written as $|X|$. 
A $\Zp$-extension $k_{p^\infty}$ of a global field $k$ is 
a direct system $(k_{p^n})_n$ of $\Z/p^n\Z$-extensions or its union $\bigcup_n k_{p^n}$. 
The following theorem was initially proved by Kisilevsky {\cite[Corollary 2]{Kisilevsky1997PJM}}. 
We note that although Kisilevsky's proof is short and clear, we here give 
our original proof with a purpose. 

\begin{thm}
\label{thm.field} 
Let $k_{p^\infty}$ be a $\Zp$-extension of a global field $k$. 
Then, the sizes of the class groups ${\rm C}(k_{p^n})$, 
those of the non-$p$-subgroups ${\rm C}(k_{p^n})_{{\rm non}\text{-}p}$, 
and those of the $l$-torsion subgroups ${\rm C}(k_{p^n})_{(l)}$ for each prime number $l$ converge in $\Zp$. 
\end{thm}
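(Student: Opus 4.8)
The plan is to treat the $p$-part and the prime-to-$p$ part of the class number separately and to reduce the whole statement to a single congruence relating consecutive layers. Write $h_n=|{\rm C}(k_{p^n})|$ and factor $h_n=h_n^{(p)}\cdot h_n^{\text{non-}p}$, where $h_n^{(p)}=|{\rm C}(k_{p^n})_{(p)}|$. For the $p$-part I would invoke Iwasawa's class number formula (in the form valid for both number fields and function fields): for $n\gg 0$ one has $h_n^{(p)}=p^{\lambda n+\mu p^n+\nu}$. If $\lambda=\mu=0$ this is eventually the constant $p^\nu$; otherwise the exponent tends to $\infty$, so $h_n^{(p)}\to 0$ in $\Zp$. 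In either case $h_n^{(p)}$ converges in $\Zp$, and it remains to prove convergence of $h_n^{\text{non-}p}$ and of each $l$-part.

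For the prime-to-$p$ part the engine is the pair of maps between consecutive layers: the extension-of-ideals map $j\colon {\rm C}(k_{p^n})\to {\rm C}(k_{p^{n+1}})$ and the norm $N\colon {\rm C}(k_{p^{n+1}})\to {\rm C}(k_{p^n})$, which satisfy $N\circ j=p$ and $j\circ N=N_G$, where $N_G=\sum_{\tau\in G}\tau$ and $G={\rm Gal}(k_{p^{n+1}}/k_{p^n})\cong\Z/p$. Since $l\neq p$, multiplication by $p$ is an automorphism of every $l$-part, so on prime-to-$p$ parts $j$ is injective and $N$ is surjective. I would then view $A_n:={\rm C}(k_{p^n})_{\text{non-}p}$ as a module over $\Z[1/p][\Gamma_n]$ with $\Gamma_n={\rm Gal}(k_{p^n}/k)\cong\Z/p^n$, and use the idempotent decomposition $\Z[1/p][\Gamma_n]\cong\prod_{d=0}^{n}\Z[1/p][\zeta_{p^d}]$ to split $A_n=\bigoplus_{d=0}^{n}M_d^{(n)}$, where $M_d^{(n)}$ is the $\Phi_{p^d}$-isotypic piece, a finite $\Z[\zeta_{p^d}]$-module of order prime to $p$.

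The crux is the one-step growth. A character of $\Gamma_{n+1}$ is nontrivial on the last-step subgroup $G$ exactly when it has full order $p^{n+1}$, so $\ker N_G$ on $A_{n+1}$ is precisely the top piece $M_{n+1}^{(n+1)}$, while $N_G$ acts as $p$ (an automorphism) on the other pieces. Combining this with the injectivity of $j$ and surjectivity of $N$ on prime-to-$p$ parts yields the identity $h_{n+1}^{\text{non-}p}=h_n^{\text{non-}p}\cdot |M_{n+1}^{(n+1)}|$. Now I would use that $M_{n+1}^{(n+1)}$ is a finite $\Z[\zeta_{p^{n+1}}]$-module of order prime to $p$: its order is a product of residue-field cardinalities $l^{f}$ with $f={\rm ord}_{p^{n+1}}(l)$, and each such $l^{f}\equiv 1\pmod{p^{n+1}}$, whence $|M_{n+1}^{(n+1)}|\equiv 1\pmod{p^{n+1}}$. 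Therefore $h_{n+1}^{\text{non-}p}\equiv h_n^{\text{non-}p}\pmod{p^{n+1}}$, so $(h_n^{\text{non-}p})_n$ is $p$-adically Cauchy and converges in $\Zp$. Running the same computation one prime $l$ at a time gives the convergence of $|{\rm C}(k_{p^n})_{(l)}|$, and multiplying the $p$-part limit by the prime-to-$p$ limit gives the convergence of $|{\rm C}(k_{p^n})|$.

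The step I expect to be the main obstacle is establishing the clean one-step identity $h_{n+1}^{\text{non-}p}=h_n^{\text{non-}p}\cdot|M_{n+1}^{(n+1)}|$, i.e.\ simultaneously identifying $\ker N$ on the prime-to-$p$ part with the top isotypic piece and verifying that $N$ is onto there; this is where the coprimality of $p$ with the module orders (needed to invert $p$ and split $A_n$) must be used carefully, and where, in the function-field case with $p$ equal to the characteristic, the $p$-part input has to be quoted in the appropriate form. The congruence $|M_{n+1}^{(n+1)}|\equiv 1\pmod{p^{n+1}}$ is what forces the multiplicative increments to be $p$-adically small even when $h_n^{\text{non-}p}$ is unbounded as an integer, so it is the genuine source of convergence and deserves the most care.
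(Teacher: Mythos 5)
Your prime-to-$p$ argument is correct, and it repackages the paper's proof in a structurally different way. The paper also works with the relative norm $\Nr\colon {\rm C}(k_{p^{n}})\to {\rm C}(k_{p^{n-1}})$ and proves surjectivity on $l$-parts exactly as you do, via $\Nr\circ\iota = (\,\cdot\,)^p$; but instead of splitting ${\rm C}(k_{p^{n}})_{\text{non-}p}$ into isotypic pieces over $\Z[1/p][{\rm Gal}(k_{p^n}/k)]$, it bounds $|({\rm Ker}\,\Nr)_{(l)}|$ by a bare-hands orbit count: a nontrivial class in the kernel fixed by ${\rm Gal}(k_{p^n}/k_{p^{n-1}})$ would satisfy $[\mf{a}]^p=[\prod_{i}\mf{a}^{\tau^i}]=1$, hence be trivial since $l\neq p$, so every nontrivial ${\rm Gal}(k_{p^n}/k)$-orbit has size exactly $p^n$ and $|({\rm Ker}\,\Nr)_{(l)}|\equiv 1 \bmod p^n$. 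Your identification of $\ker N$ with the faithful component $M_{n+1}^{(n+1)}$ plus the residue-field congruence $l^{{\rm ord}_{p^{n+1}}(l)}\equiv 1 \bmod p^{n+1}$ proves the same congruence; note the two mechanisms are really one and the same (a simple $\Z[\zeta_{p^{n+1}}]$-module of order prime to $p$ is precisely a module on which all Galois orbits of nontrivial elements are free). What your version buys is a cleaner structural picture; what the paper's buys is brevity and portability, since the orbit count transfers verbatim to the 3-manifold analogue (Theorem \ref{thm.3mfd}), where the transfer map $h^!$ plays the role of $\iota$ and no semisimple decomposition of $H_1$ is available off the shelf.

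The genuine gap is in your treatment of the $p$-part. Iwasawa's formula $h_n^{(p)}=p^{\lambda n+\mu p^n+\nu}$ is a valid citation for number fields, and for \emph{constant} $\Zp$-extensions of function fields (with $\mu=0$, \cite[Theorem 11.5]{Rosen2002GTM}); but it is \emph{false} for geometric $\Zp$-extensions of function fields: by Gold--Kisilevsky \cite{GoldKisilevsky1988}, the $p$-parts there can grow arbitrarily fast, so no formula of that shape exists — a point the paper makes explicitly, emphasizing that the theorem persists in exactly this case. You flagged that "the $p$-part input has to be quoted in the appropriate form" but did not supply it, and no form of the growth formula will do. The correct (and weaker) input is the one the paper uses: class field theory gives $|{\rm C}(k_{p^{n-1}})|$ dividing $|{\rm C}(k_{p^{n}})|$ for $n\gg 0$ — via the Hilbert class field when $k_{p^n}/k_{p^{n-1}}$ is ramified, and via $k_{p^n}\ol{\F}_{p'}=k_{p^{n-1}}\ol{\F}_{p'}$ in the constant case — so that $v_p(h_n)$ is eventually nondecreasing and $h_n^{(p)}$ is either eventually constant or tends to $0$ in $\Zp$, with no control on the growth rate needed. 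Substituting this divisibility for the appeal to Iwasawa's formula closes the gap and makes the rest of your argument go through in all cases.
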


\begin{proof} 
It is well-known (see \Cref{rem.p-part} below) that for any $n\gg 0$, 
the class field theory yields that $|{\rm C}(k_{p^{n-1}})|$ divides $|{\rm C}(k_{p^{n}})|$. 
Hence the sequence $|{\rm C}(k_{p^{n}})_{(p)}|$ is a constant for $n\gg 0$ or it converges to 0 in $\Zp$. 
Thus, it suffices to prove for each prime number $l\neq p$ and $n\in \Z_{>0}$ 
the congruence formula of relative class numbers 
\begin{align}
|{\rm C}(k_{p^{n}})_{(l)}|/|{\rm C}(k_{p^{n-1}})_{(l)}| \equiv 1 \ {\rm mod}\ p^{n}.
\end{align}
Define the relative norm map 
${\rm Nr}:{\rm C}(k_{p^{n}})\to {\rm C}(k_{p^{n-1}}): [\mf{a}]\mapsto \prod_{i=0}^{p-1}\mf{a}^{\tau^i}$, where $\tau$ is a generator of ${\rm Gal}(k_{p^{n}}/k_{p^{n-1}})\cong \Z/p\Z$. 

The map ${\rm Nr}:{\rm C}(k_{p^{n}})_{(l)}\to {\rm C}(k_{p^{n-1}})_{(l)}$ on the $l$-parts is surjective. 
Indeed, there is a natural homomorphism $\iota:{\rm C}(k_{p^{n-1}})_{(l)}\to {\rm C}(k_{p^{n}})_{(l)}$ 
and the composition map ${\rm Nr}\circ \iota: {\rm C}(k_{p^{n-1}})_{(l)}\to {\rm C}(k_{p^{n-1}})_{(l)}$ is given by $x\mapsto x^p$. Since $l\neq p$, this map 
${\rm Nr} \circ \iota $ 
is an isomorphism and hence ${\rm Nr}$ is surjective. 

Note that $|({\rm Ker}\,{\rm Nr} 
)_{(l)}|=|{\rm C}(k_{p^{n}})_{(l)}|/|{\rm C}(k_{p^{n-1}})_{(l)}|$. 
We study the Galois module structure of $({\rm Ker}\,{\rm Nr})_{(l)}$ to obtain the assertion. 
Put $G={\rm Gal}(k_{p^{n}}/k) \cong \Z/p^{n}\Z$ and let $\sigma$ be a generator of $G$. 
For each $[\mf{a}]\in ({\rm Ker}\,{\rm Nr})_{(l)},$ let $G[\mf{a}]$ denote the $G$-orbit of $[\mf{a}]$. 
If $[\mf{a}]\neq 1$, then $|G[\mf{a}]|=p^{n}$. 
Indeed, suppose that $|G[\mf{a}]|<p^{n}$. 
Then $|G[\mf{a}]|$ divides $p^{n-1}$ and we have that $[\mf{a}]=[\mf{a}^{\sigma^{p^{n-1}}}]$. 
Note that $\sigma^{p^{n-1}}$ generates the group $p^{n-1}\Z/p^{n}\Z\cong {\rm Gal}(k_{p^{n}}/k_{p^{n-1}})< G$ and put $\tau=\sigma^{p^{n-1}}$. 
Since $[\mf{a}]\in ({\rm Ker}\,{\rm Nr})_{(l)}$, we have $[\mf{a}]^p=[\prod_{i=0}^{p-1}\mf{a}^{\tau^i}]$ $=1$. 
Since $l\neq p$, we obtain $[\mf{a}]=1$. 

Now the $G$-orbital decomposition yields that ${\rm Ker}\,{\rm Nr}_{(l)}\equiv 1$ mod $p^{n}$, hence the claimed formula $(\ast)$. 
Therefore, both $(|{\rm C}(k_{p^n})_{(l)}|)_n$ and $(|{\rm C}(k_{p^n})_{\text{non-}p}|)_n$ are $p$-adic Cauchy sequences and converge in the completed ring $\Zp$, 
and so does $(|{\rm C}(k_{p^n})|)_n$. 
\end{proof} 

\begin{rem} \label{rem.p-part} 
The following well-known argument completes the first paragraph of the proof. 

(1) For each number field $k$, let $\wt{k}$ denote the Hilbert class field, that is, the maximal unramified abelian extension of $k$. Then the class field theory asserts that ${\rm Cl}(k)\cong {\rm Gal}(\wt{k}/k)$. 
If $k'/k$ is a ramified extension of degree $p$, then we have $\wt{k} \cap k'=k$ and that $\wt{k}k' /k'$ is an unramified extension of degree $|{\rm Cl}(k)|$, and hence $|{\rm Cl}(k)|$ divides $|{\rm Cl}(k')|={\rm deg}( \wt{k'}/k)$. 

If $k_{p^\infty}/k$ is a $\Zp$-extension, then the inertia group of a ramified prime is an open subgroup of $\Zp={\rm Gal}(k_{p^\infty}/k)$, and hence $k_{p^n}/k_{p^{n-1}}$ is a ramified $p$-extension for any $n\gg0$. 

(2) 
For a function field $k$, let $\wt{k}$ denote the maximal unramified abelian extension of $k$. 
Let $\ol{\F}_{p'}$ denote the algebraic closure of $\F_{p'}$, so that we have ${\rm Gal}(\ol{\F}_{p'}/\F_{p'})\cong \wh{\Z}=\varprojlim_{r} \Z/r\Z$. 
Then an analogue of the class field theory assets that ${\rm Cl}^0(k)\cong {\rm Gal}(\wt{k}/k\ol{\F}_{p'})$. 

(i) If $k'/k$ is a constant extension of degree $p$, then by $k'\ol{\F}_{p'}=k\ol{\F}_{p'}$, 
$\wt{k}/k\ol{\F}_{p'}$ is a subextension of $\wt{k'}/k'\ol{\F}_{p'}$, and hence $|{\rm Cl}(k)|$ divides $|{\rm Cl}(k')|$. 

(ii) If $k'/k$ is a geometric ramified extension of degree $p$, then a similar argument to (1) yields that $|{\rm Cl}(k)|$ divides $|{\rm Cl}(k')|$. 

For a $\Zp$-extension $k_{p^\infty}/k$ of a function field, $k_{p^n}/k_{p^{n-1}}$ is a constant extension for all $n\in \Z_{>0}$ and (i) applies, or $k_{p^n}/k_{p^{n-1}}$ is a geometric ramified extension for all $n\gg 0$ and (ii) applies. In the latter case, we always have $p=p'$. 
\end{rem} 

\begin{rem} In a view of the analogy between number fields and function fields, 
Iwasawa pointed out so-called Iwasawa's class number formula (cf.~\cite{Iwasawa1959},\cite[Section 7.2]{Washington}), 
which asserts that if $k_{p^\infty}$ is a $\Zp$-extension of a number field $k$, 
then there exists some $\lambda, \mu, \nu \in \Z_{\geq 0}$ such that for any $n\gg 0$,
\[|{\rm C}(k_{p^n})_{(p)}|=p^{\lambda n + \mu p^n +\nu}\]
holds. 
A similar formula with $\mu=0$ holds for a constant $\Zp$-extension of a function field \cite[Theorem 11.5]{Rosen2002GTM} 
and $\lambda$ is related to the genus of an algebraic curve in several senses. 

In many literature of number theory, the suffix is shifted as $k'_n=k_{p^n}$. 
Note that $\lambda'n + \mu' p^n +\nu'=\lambda (n-1) + \mu p^{n-1} +\nu$ implies 
$\mu=p\mu'$, $\lambda=\lambda'$, $\nu=\nu'+\lambda$. 

Gold--Kisilevsky \cite{GoldKisilevsky1988} pointed out that in a geometric $\Zp$-extension the $p$-parts can grow arbitrarily fast. Even in such a case, \Cref{thm.field} persists. 
\end{rem} 

\begin{rem} Let $l\neq p$ be a prime number. 
Washington \cite{Washington1978Invent} proved that in a cyclotomic $\Zp$-extension of a number field abelian over $k$, for each prime number $l\neq p$, the $l$-part of the class numbers are bounded, and hence the sequence is constant for $n\gg 0$. 
The assertion on the $l$-part in our \Cref{thm.field} is a weak generalization of Washington's one. 
\end{rem} 

\begin{rem} Weber's class number problem for function fields over finite fields is solved;  
Shen--Shi \cite{ShenShi2015JNT} completed the list of the only existing 8 exceptional cases. 
\end{rem}

\section{3-manifolds} 
In this section, we establish a theorem of $p$-adic convergence in the context of 3-dimensional topology. 
A \emph{$\Zp$-cover} of a compact 3-manifold $X$ is a compatible system $(X_{p^n}\to X)_n$ of $\Z/p^n\Z$-covers. 
The following is an analogue of \Cref{thm.field}. 
\begin{thm}[$p$-adic convergence] \label{thm.3mfd}
Let $(X_{p^n}\to X)_n$ be a $\Zp$-cover of a compact connected 3-manifold $X$. 
Then, the sizes of the torsion subgroups $H_1(X_{p^n})_{\rm tor}$, 
those of the non-$p$ torsion subgroups $H_1(X_{p^n})_{\text{non-}p}$,
and those of the $l$-torsion subgroups $H_1(X_{p^n})_{(l)}$ for each prime number $l$,  
of the 1st homology groups converge in $\Zp$. 
\end{thm}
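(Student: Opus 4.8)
The plan is to transport the proof of Theorem~\ref{thm.field} across Mazur's dictionary, replacing the ideal class group $\mathrm{C}(k_{p^n})$ by the torsion subgroup $H_1(X_{p^n})_{\mathrm{tor}}$, the relative ideal norm $\mathrm{Nr}$ by the map $\pi_*\colon H_1(X_{p^n})\to H_1(X_{p^{n-1}})$ induced by the degree-$p$ covering projection $\pi\colon X_{p^n}\to X_{p^{n-1}}$, and the extension-of-ideals map $\iota$ by the transfer $\mathrm{tr}\colon H_1(X_{p^{n-1}})\to H_1(X_{p^n})$. These satisfy the two identities $\pi_*\circ\mathrm{tr}=p\cdot\mathrm{id}$ and $\mathrm{tr}\circ\pi_*=\sum_{i=0}^{p-1}(\tau^i)_*$, where $\tau$ generates $\mathrm{Gal}(X_{p^n}/X_{p^{n-1}})\cong\Z/p\Z$; these are the precise topological counterparts of the relations used in the field case. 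Everything then runs on the finitely generated abelian groups $H_1(X_{p^n})$ and their torsion subgroups.

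For each prime $l\neq p$ I would first show that $\pi_*\colon H_1(X_{p^n})_{(l)}\to H_1(X_{p^{n-1}})_{(l)}$ is surjective: since $\pi_*\circ\mathrm{tr}$ is multiplication by $p$, which is an automorphism of the finite $l$-group $H_1(X_{p^{n-1}})_{(l)}$, surjectivity is immediate. Next, set $G=\mathrm{Gal}(X_{p^n}/X)\cong\Z/p^n\Z$ with generator $\sigma$, and analyse the $G$-module $(\ker\pi_*)_{(l)}$. If a nonzero class $x$ had a $G$-orbit of size $<p^n$, it would be fixed by $\tau=\sigma^{p^{n-1}}$, whence $\mathrm{tr}(\pi_*x)=\sum_{i=0}^{p-1}(\tau^i)_*x=p\,x$; but $x\in\ker\pi_*$ forces $p\,x=0$, and as $l\neq p$ this gives $x=0$. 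Hence every nontrivial $G$-orbit has size exactly $p^n$, and the orbital decomposition yields $|(\ker\pi_*)_{(l)}|\equiv 1 \pmod{p^n}$, i.e.\ $|H_1(X_{p^n})_{(l)}|/|H_1(X_{p^{n-1}})_{(l)}|\equiv 1\pmod{p^n}$. This is the verbatim analogue of the congruence $(\ast)$ and makes $(|H_1(X_{p^n})_{(l)}|)_n$ and $(|H_1(X_{p^n})_{\text{non-}p}|)_n$ into $p$-adic Cauchy sequences, hence convergent in $\Zp$.

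It remains to treat the $p$-part, and this is where I expect the real difficulty. In the field case the divisibility $|\mathrm{C}(k_{p^{n-1}})|\mid|\mathrm{C}(k_{p^n})|$ for $n\gg 0$ came from class field theory via the Hilbert class field and the ramification of $k_{p^n}/k_{p^{n-1}}$ (Remark~\ref{rem.p-part}). The topological substitute is the maximal finite abelian (``Hilbert class'') cover of $X_{p^{n-1}}$, namely the one with deck group $H_1(X_{p^{n-1}})_{\mathrm{tor}}$ attached to $\pi_1(X_{p^{n-1}})\surj H_1(X_{p^{n-1}})\surj H_1(X_{p^{n-1}})_{\mathrm{tor}}$. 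I would form its compositum with $X_{p^n}\to X_{p^{n-1}}$ and argue, as in the field case, that this compositum is a finite abelian cover of $X_{p^n}$ of degree $|H_1(X_{p^{n-1}})_{\mathrm{tor}}|$, yielding the divisibility and hence that $(|H_1(X_{p^n})_{(p)}|)_n$ is eventually constant or converges to $0$ in $\Zp$. The obstacle is that, unlike $\mathrm{Nr}$ and $\iota$ on class groups, the transfer and projection do not behave well on $p$-torsion (the identity $\pi_*\circ\mathrm{tr}=p\cdot\mathrm{id}$ is useless there), so one must control the interaction between the free and torsion parts of $H_1$ and pin down the correct topological analogue of ``ramification'' that guarantees the compositum is again a torsion cover for $n\gg 0$. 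Once the $p$-part is shown to converge, the full torsion $|H_1(X_{p^n})_{\mathrm{tor}}|=|H_1(X_{p^n})_{(p)}|\cdot|H_1(X_{p^n})_{\text{non-}p}|$ is a product of convergent sequences and converges in $\Zp$, completing the proof.
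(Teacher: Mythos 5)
Your treatment of the prime-to-$p$ parts is correct and is essentially the paper's own argument: the identities $\pi_*\circ\mathrm{tr}=p\cdot\mathrm{id}$ and $\mathrm{tr}\circ\pi_*=\sum_{i=0}^{p-1}(\tau^i)_*$ give surjectivity of $\pi_*$ on $l$-parts and kill any class with small $G$-orbit in $(\ker\pi_*)_{(l)}$, and the orbit decomposition yields $|H_1(X_{p^n})_{(l)}|/|H_1(X_{p^{n-1}})_{(l)}|\equiv 1 \bmod p^n$, exactly the congruence $(\ast)$ in the paper. (The paper invokes its Lemma \ref{lem.3mfd} at this point to control the restrictions of $h_*$ and $h^!$ to torsion; your transfer computation makes the $l\neq p$ case self-contained, which is fine.)

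The genuine gap is the $p$-part, precisely where you flag ``the real difficulty,'' and your proposed class-field-theoretic substitute cannot be repaired as stated. The number-field argument of Remark \ref{rem.p-part} hinges on \emph{ramification}: $k_{p^n}/k_{p^{n-1}}$ is ramified for $n\gg 0$, which forces $\wt{k}\cap k'=k$ and hence the degree count $|{\rm Cl}(k)| \mid |{\rm Cl}(k')|$. The covers $X_{p^n}\to X_{p^{n-1}}$ are honest (unbranched) covering spaces, so there is no analogue of ramification, and indeed the divisibility $|H_1(Y)_{\rm tor}| \mid |H_1(Y')_{\rm tor}|$ is \emph{false} for general finite covers $Y'\to Y$ of 3-manifolds (e.g.\ the $p$-fold cover $S^3\to L(p,1)$ has trivial $H_1$ while the base has $H_1\cong \Z/p\Z$); your compositum with the maximal abelian torsion cover can collapse into the tower, and no purely covering-space argument can rule this out. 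What the divisibility really uses is the \emph{tower} structure, and the paper's substitute is module-theoretic rather than class-field-theoretic: Lemma \ref{lem.3mfd} realizes each finite layer $X_{p^{n+1}}\to X$ inside an approximating $\Z$-cover $X_\infty\to X$ (possible since the tower corresponds to $\wh{\pi}_1(X)\surj\Zp$), applies the Sakuma--Wang exact sequence $0\to H_1(X_\infty)/(t^{p^n}-1)H_1(X_\infty)\to H_1(X_{p^n})\to\Z\to 0$ with compatible sections, and uses the factorization of $t^{p^{n+1}}-1$ by $t^{p^n}-1$ in $\Z[t^{\pm 1}]$ to show that the torsion subgroups $(H_1(X_{p^n})_{\rm tor})_n$ form a \emph{surjective} system; divisibility of the $p$-parts, hence their eventual constancy or convergence to $0$ in $\Zp$, then follows. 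Without this (or an equivalent Alexander-module argument), your proof of the full statement is incomplete.
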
 

We aim to prove the assertion in a parallel manner to \Cref{thm.field}. 
In order to discuss the $p$-part, we first establish the Iwasawa type formula for unbranched $\Zp$-covers of 3-manifolds, which is a variant of the results of \cite{HillmanMateiMorishita2006, KadokamiMizusawa2008, Ueki2} in a slightly general setting: 
\begin{thm}[The Iwasawa-type formula] \label{prop.3mfd.Iwasawa} 
Let $(X_{p^n}\to X)_n$ be a $\Zp$-cover of a compact connected 3-manifold. Then, 
there exists some $\mu,\lambda\in \Z_{\geq 0}$ and $\nu\in \Z$ such that 
for every sufficiently large $n$, 
\[H_1(X_{p^n})_{(p)}=p^{\mu p^n+\lambda n + \nu}\] holds. 
\end{thm} 
If $X$ is the exterior of a knot in $S^3$, then for the unique $\Zp$-cover of $X$, we have $|H_1(X_{p^n})_{(p)}|=1$ by \cite[Theorem 7]{Ueki1}.

\begin{proof} Recall the Iwasawa isomorphism $\Lambda:=\Zp[[t^{\Zp}]]\congto \Zp[[T]];$ $t\mapsto 1+T$. 
Define a finitely generated $\Lambda$-module by $\mca{H}=\varprojlim_n H_1(X_{p^{n}},\Zp)$. 
For every $m\in \Z_{\geq 0}$, we have an exact sequence 
\[ 0\to H_1(X_{p^{n+m}},\Zp)/(t^{p^n}-1)H_1(X_{p^{n+m}},\Zp)\to H_1(X_{p^{n}},\Zp) \to \Z/p^{n+m}\Z\to 0.\]
Since the inverse limit is exact for compact topological groups (cf.~{\cite[Chapter 4, Proposition 2.7]{Neukirch}}), 
by taking $\varprojlim$ with respect to $m$, we obtain an exact sequence 
\[ 0\to \mca{H}/(t^{p^n}-1)\mca{H}\to  H_1(X_{p^{n}},\Zp) \to \Zp\to 0,\]
so we have $(\mca{H}/(t^{p^n}-1)\mca{H})_{\rm tor}=H_1(X_{p^{n}},\Zp)_{\rm tor}$. 
By the structure theorem of finitely generated $\Lambda$-modules (cf.~\cite[Theorem 13.12]{Washington}), there is a pseudo isomorphism to a standard form
\[\mca{H}\overset{\sim}{\To} \Lambda^r\oplus \bigoplus_i\Lambda/(p^{n_i}) \oplus \bigoplus_j \Lambda/f_j(1+T)^{m_j},\]
where $n_i, m_j\in \Z_{\geq 0}$, and $f_j\in \Zp[t]$ are so-called distinguish irreducible polynomials.

In the cases of $\Zp$-extensions of number fields and $\Zp$-cover consisting of $\Q$HS$\,^3$'s, we always have that $f_j$ does not vanish on $p$-power-th roots of unity. 
In our case, $f_j(t)$ may vanish at some $p$-power-th root of unity. 
Even in such a case, by the exact sequence 
\[0\to \Lambda/(f_j(t)^{m_j-1}, \frac{t^{p^n}-1}{f_j(t)})\overset{\times f_j(t)}{\to} \Lambda/(f_j(t)^{m_j},t^{p^n}-1) \overset{{\rm mod}\, f_j(t)}{\to} \Lambda/(f_j(t)^{m_j},f_j(t))\to 0,\] 
we see that 
\[|(\Lambda/(f_j(t)^{m_j},t^{p^n}-1))_{\rm tor}|=|\Lambda/(f_j(t)^{m_j-1},\frac{t^{p^n}-1}{f_j(t)})|.\] 
Therefore, the standard argument (cf.~\cite{Washington,Ochiai2023Iwasawa1en}) persists for this case, yielding the Iwasawa-type formula for our groups 
$(\mca{H}/(t^{p^n}-1)\mca{H})_{\rm tor}\cong H_1(X_{p^{n}},\Zp)_{\rm tor}
\cong H_1(X_{p^{n}})_{(p)}$.
\end{proof} 

\begin{proof}[Proof of \Cref{thm.3mfd}] 
By the Iwasawa-type formula \Cref{prop.3mfd.Iwasawa}, $|H_1(X_{p^{n}})_{(p)}|$ is a constant for $n\gg 0$ or converges to zero in $\Zp$. 	

It suffices to show for each prime number $l\neq p$ and $n\in\Z_{>0}$ the congruence formula 
\begin{align}
|H_1(X_{p^{n}})_{(\ell)}|/|H_1(X_{p^{n-1}})_{(\ell)}| \equiv 1 \ {\rm mod}\ p^{n}. 
\end{align} 
Write $h:X_{p^{n}}\to X_{p^{n-1}}$ and $h_*:H_1(X_{p^{n}}) \to H_1(X_{p^{n-1}})$. 
Consider the transfer map $h^!:H_1(X_{p^{n-1}})\to H_1(X_{p^{n}})$ defined by $[c]\mapsto [\sum_{\sigma\in {\rm Gal}(h)} \sigma c_1]$, where $c$ is an open chain and $c_1$ is a lift of $c$. 
Then the composition map is $h_*\circ h^!:H_1(X_{p^{n-1}})\to H_1(X_{p^{n-1}}); [c]\mapsto p[c]$. 
Since $p\neq l$, this map $h_*\circ h^!$ restricts to an automorphism of $H_1(X_{p^{n-1}})_{(l)}$. Hence $h_*:H_1(X_{p^{n}})_{(l)}\to H_1(X_{p^{n-1}})_{(l)}$ is surjective and $h^!:H_1(X_{p^{n-1}})_{(l)}\to H_1(X_{p^{n}})_{(l)}$ is injective. 

Put $G={\rm Gal}(X_{p^{n}}\to X) \cong \Z/p^{n}\Z$ and let $\sigma$ be a generator of $G$. 
For each $[c]\in ({\rm Ker}h_*)_{(l)},$ let $G[c]$ denote the $G$-orbit of $[c]$. 
If $[c]\neq 1$, then $|G[c]|=p^{n}$. 
Indeed, suppose that $|G[c]|<p^{n}$. Then 
$|G[c]|$ divides $p^{n-1}$ and we have that $[c]=[\sigma^{p^{n-1}}c]$. 
Note that $\sigma^{p^{n-1}}$ generates the group $p^{n-1}\Z/p^{n}\Z\cong {\rm Gal}(h)< G$ and put $\tau=\sigma^{p^{n-1}}$. 
Since $[c]\in ({\rm Ker}h_*)_{(l)}$, we have $[c]^p=\prod_{i=0}^{p-1}\tau^i [c]$ $=1$. 
Since $l\neq p$, we obtain $[c]=1$. 

Now the $G$-orbital decomposition yields that ${{\rm Ker}\,h_\ast}_{(l)}\equiv 1$ mod $p^{n}$, hence the claimed formula $(\ast)$. 
Therefore, both $(|H_1(X_{p^n})_{(l)}|)_n$ and $(|H_1(X_{p^n})_{\text{non-}p}|)_n$ are $p$-adic Cauchy sequences and converge in the completed ring $\Zp$, 
and so does $(|H_1(X_{p^n})_{\rm tor}|)_n$. 
\end{proof}

\begin{rem} 
(1) 
Some standard $\Zp$-covers are derived from $\Z$-covers of $X$.    
An example of a $\Zp$-cover that is not derived from a $\Z$-cover may be obtained from a 2-component link $L=K_1\cup K_2$ in $S^3$ with meridians $\mu_1$ and $\mu_2$; 
consider the surjective homomorphism  $\wh{\pi}_1(S^3-L)\surj \Z_5$ defined by $\mu_1\mapsto 1$ and $\mu_2\mapsto \sqrt{-1}$. 

(2) \Cref{thm.3mfd} applies to $\Zp$-cover of the exterior of a finite link in $S^3$. 
In addition, for a link $\mca{L}=\bigcup_{i\in \Z_{> 0}} K_j$ with countably infinite disjoint component in $S^3$, if we define a surjective homomorphism $\tau:{\wh{\pi}_1(X-\mca{L})}\surj \Zp; \mu_i\mapsto p^i$ from the profinite completion, $\mu_i$ being a meridian of $K_i$, then we obtain a branched $\Zp$-cover branched along an infinite link. By considering $\tau_n:{\wh{\pi}_1(X-\bigcup_{i\leq n}K_i)}\surj \Zp; \mu_i\mapsto p^i$ on each layer, 
the same argument in the proof applies, so \Cref{thm.3mfd} extends to such a case. 
\end{rem} 

\section{Alternative proofs} \label{section.alternative}
Here we state a general proposition to discuss alternative proofs of the $p$-adic convergence theorems (Theorems \ref{thm.field}, \ref{thm.3mfd}). 

\begin{prop} \label{prop.conv} 
Let $p$ be a prime number. 
Let $\Gamma$ be a multiplicative group isomorphic to the additive group $\Zp$ of $p$-adic integers.
For each $n\in \Z_{>0}$, put $\Gamma_n=\Gamma^{p^n}$ and $G_n=\Gamma/\Gamma_n$. 

{\rm (1)}\cite[Proposition 1]{Kisilevsky1997PJM} Let $A$ be a discrete $\Gamma$-module such that the $\Gamma_n$-invariant subgroup 
$A_n=A^{\Gamma_n}=\{a\in A\mid \gamma(a)=a\text{\ for\ all\ }\gamma \in \Gamma_n\}$
is a finite group for every $n$. Then we have 
\[ |A_{n}|\equiv |A_{n-1}| \ {\rm mod}\  p^n.\]

{\rm (2)} Let $H$ be a compact $\Gamma$-module such that the $\Gamma_n$-coinvariant quotient group 
$H_n=H_{\Gamma_n}=H/\{(1-g)a\mid g\in \Gamma_n, a\in H\}$ is a finite group for every $n$. Then for any prime number $l\neq p$, the sizes of the $l$-parts satisfy 
\[|{H_{n}}_{(l)}|\equiv |{H_{n-1}}_{(l)}| \ {\rm mod} \ p^n.\] 
\end{prop} 

\begin{proof} 
(1) Let $B=\{a\in A_n \mid \gamma(a)\neq a\text{\ for\ all\ } 1\neq \gamma\in G_n\}$ and write $A_n=B\sqcup C$. 
Since $G_n$ is a cyclic group, every $c\in C$ is fixed by the unique subgroup of $G_n$ of order $p$, 
so we have $C\subset A^{\Gamma_{n-1}}=A_{n-1}$. 
Since $A_{n-1}\cap B=\emptyset$, we have $A_{n-1}\subset C$. Thus we have $C=A_{n-1}$. 
Since $B$ is the disjoint union of orbits of size $p^n$, 
we have $|A_n|=|B|+|A_{n-1}|\equiv |A_{n-1}|$ mod $p^n$. 

(2)
[Proof 1] We omit ``$_{(l)}$''. 
It suffices to show that $|H_n|/|H_{n-1}|=|{\rm Ker}(H_n\surj H_{n-1})|\equiv 1$ mod $p^n$.
Let us prove that if $0 \neq [a] \in {\rm Ker}(H_n\surj H_{n-1})$, then $G_n=\langle t \rangle =\Gamma/\Gamma_n \cong \Z/p^n\Z$ acts on $[a]$ freely. 
Let $[a]\in {\rm Ker}(H_n\surj H_{n-1})$ and suppose $|G_n[a]|<p^n$, so that 
$[a]=t^{p^{n-1}}[a]$. Put $\sigma=t^{p^{n-1}}$. 
Since $H_{n-1}=H/(1-\sigma)H$, we have $a\in (1-\sigma)H$ and hence $\sum_{i=0}^{p-1}\sigma^i a =(1-\sigma^p)H$. Thus we have $p[a]=\sum_{i=0}^{p-1}\sigma^i [a]=0$ in $H_n$. 
Since $l\neq p$, we have $[a]=0$. 

[Proof 2] We continue to omit ``$_{(l)}$''. 
Let $\varphi_n:H_n\surj H_{n-1}; [a]\mapsto [a]$ denote the natural surjection. 
Note that there is a natural map 
$\iota_n:H_{n-1}\to H_n; [a]\mapsto \sum_{i=0}^{p-1}\sigma^i[a]$. 
Since $\varphi_n\circ \iota_n([a])=p[a]$ and $l\neq p$, 
$\varphi_n\circ \iota_n$ is an isomorphism and $\iota_n$ is an injection. 
Thus we have an injective system $(H_n;\iota_n)_n$.  
If we put $A=\varinjlim H_n$, then the assertion (1) applies. 
\red{Indeed, we obviously have $\iota_n(H_n) \subset H_{n+1}^{\,\Gamma_n}$. 
In addition, if $a\in H_{n+1}^{\Gamma_n}$, then by $\iota_n(\varphi_{n+1}(a))=(1+t^{p^n}+\cdots+t^{(p-1)p^n})a=pa$, $\varphi_{n+1}(a)=0$ implies $a=0$. 
Hence, the restriction of $\varphi_{n+1}$ to $H_{n+1}^{\Gamma_n}$ is injective, and we obtain $\iota_n(H_n)\supset H_{n+1}^{\,\Gamma_n}$. 
Thus, we have $\iota_n(H_n) = H_{n+1}^{\,\Gamma_n}$, and hence $H_n=A^{\Gamma_n}$.}   
\end{proof} 

The common argument in our proofs in the previous sections may be generalized to the first proof of (2). 
Kisilevsky \cite{Kisilevsky1997PJM} applied the assertion (1) to the direct limit of the class groups in a $\Zp$-extension to obtain his result. 
In the topology side, we may also consider the direct limit $A=\varinjlim H_1(X_n)_{\rm tor}$ via the transfer maps $h^!:H_1(X_{n-1})\to H_1(X_{n}): [c]\mapsto [\sum_{\sigma \in {\rm Gal}(h)} \sigma c_1]$ and apply (1) to obtain the result. 
Kionke's general framework \cite{Kionke2020JLMS} for the $p$-adic limits of topological invariants 
instead considers the injective system of the cohomology groups $H^i(X_n;\Z)$. 
The second author's proof \cite[Theorem 5.3]{Yoshizaki2023JTNB} for the cyclotomic $\Z_2$-extension of $\Q$ is very different from those in the above and uses the unit groups. We wonder if it extends to general cases and may be translated into analogous contexts. 
In fact, an analogue of the unit group is still in mystery (cf.~\cite{UekiYasuda2023LDTNT}). 

\begin{rem} \label{rem.Ozaki} 
After Sinnott's announcement in 1972, Han \cite[Theorem 4]{Han1991} established an explicit formula for the $p$-adic limit of class numbers in a $\Zp$-extension of a CM field by using an analytic argument. 

Recently, Ozaki \cite{Ozaki-padiclimit} generalized the $p$-adic convergence theorem of class numbers 
to a general extension of a number field with a finitely generated pro-$p$ Galois group by developing an analytic method, 
to reveal relationships amongst several arithmetic invariants; the class numbers, the ratios of $p$-adic regulators, the square roots of discriminants, and the order of algebraic $K_2$-groups of the ring of integers. 
Studying their analogues in the knot theory side would give a new cliff to extend the dictionary of arithmetic topology.
\end{rem} 

\begin{rem} \label{rem.} J.~\!Schettler proved that in a $\Zp$-extension of a $\Zp$-field, Iwasawa's $\lambda$ converges in $\Zp$ \cite[Corollary 11]{Schettler2014IJNT}. It would be interesting to establish analogous results for 3-manifolds or function fields and give numerical investigations. 
\end{rem}

\section{Cyclic resultants} 
Let $p$ be a prime number as before. 
In various situations, in a $\Zp$-tower, the class number or its analogue of the layer of degree $p^n$ is given by the $p^n$-th cyclic resultant of a certain polynomial invariant. 
In this section, we study the $p$-adic limits of $p$-power-th cyclic resultants of a polynomial in $\Z[t]$. 

\subsection{Signatures}
For each $n\in \Z_{>0}$, \emph{the $n$-th cyclic resultant of} $0\neq f(t)\in \Z[t]$ is defined by the determinant of Sylvester matrix, or equivalently, by 
\[{\rm Res}(t^n-1,f(t))=\prod_{\zeta^n=1}f(\zeta),\]
where $\zeta$ runs through $n$-th roots of unity in a fixed algebraic closure $\ol{\Q}$ of $\Q$. 
If $f(t)=a_0\prod_i (t-\alpha_i)$, then ${\rm Res}(t^n-1,f(t))=$ $(-1)^{n\,{\rm deg}f(t)}$ $a_0^n\prod(\alpha_i^n-1)$ holds. 
The $n$-th cyclotomic polynomial $\Phi_n(t)\in \Z[t]$ for each $n\in \Z_{>0}$ is an irreducible polynomial determined by $t^n-1=\prod_{m|n} \Phi_m(t)$ $(m,n\in \Z_{>0})$ recursively. 
The non-$p$ part of an integer $x$ is defined by $x_{non-p}=x|x|_p$. 
For each prime number $l$, the $l$-part of an integer $x$ is defined to be the maximal $l$-power dividing $x$, that is, $|x|_l^{-1}$. 
The following lemma reduces the calculation of the limits of $p$-power-th cyclic resultants to that of the absolute values. 

\begin{lem} \label{lem.sgn} Let $0\neq f(t)\in \Z[t]$. 
{\rm (1)} If ${\rm Res}(t^n-1,f(t))\neq 0$, then we have ${\rm Res}(t^n-1,f(t))>0$ if and only if {\rm (i)} $2\mid n$ and $f(1)f(-1)>0$ or {\rm (ii)} $2\nmid n$ and $f(1)>0$. 

{\rm (2)} Suppose that $n\in \Z_{>0}$ and ${\rm Res}(t^{p^n}-1, f(t)) \neq 0$.  
If $p\neq 2$, then we have ${\rm Res}(t^{p^n}-1,f(t))>0$ if and only if $f(1)>0$. 
If $p=2$, then we have ${\rm Res}(t^{p^n}-1,f(t))>0$ if and only if $f(1)f(-1)>0$. 
\end{lem}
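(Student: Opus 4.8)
The plan is to work directly with the product formula ${\rm Res}(t^n-1,f(t))=\prod_{\zeta^n=1}f(\zeta)$ and to pin down the sign of this real integer by pairing the $n$-th roots of unity under complex conjugation. Since $f$ has integer, hence real, coefficients, for every $n$-th root of unity $\zeta$ we have $\overline{f(\zeta)}=f(\overline{\zeta})$, and $\overline{\zeta}$ is again an $n$-th root of unity. Thus the non-real $n$-th roots of unity split into conjugate pairs $\{\zeta,\overline{\zeta}\}$, and each such pair contributes the factor $f(\zeta)f(\overline{\zeta})=|f(\zeta)|^2$ to the product.

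The key observation is that, because ${\rm Res}(t^n-1,f(t))\neq 0$, none of the factors $f(\zeta)$ vanishes, so every conjugate-pair contribution $|f(\zeta)|^2$ is \emph{strictly} positive. Hence the sign of the whole product is governed entirely by the factors coming from the \emph{real} $n$-th roots of unity. These are exactly $\zeta=1$, which is always present, together with $\zeta=-1$ precisely when $2\mid n$. Reading off, the sign of ${\rm Res}(t^n-1,f(t))$ equals that of $f(1)$ when $n$ is odd, and that of $f(1)f(-1)$ when $n$ is even; this is exactly statement (1), cases (ii) and (i) respectively.

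For statement (2) I would simply specialize (1) to the exponent $p^n$: since $p^n$ is odd exactly when $p\neq 2$ and even exactly when $p=2$, the two asserted equivalences follow at once from cases (ii) and (i) of (1). There is no serious obstacle here; the only points requiring care are the justification that $f(\zeta)\neq 0$ for every $n$-th root of unity $\zeta$, which is precisely what the hypothesis ${\rm Res}(t^n-1,f(t))\neq 0$ guarantees, and the correct bookkeeping of which roots of unity are real.
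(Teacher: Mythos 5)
Your proof is correct and is essentially the paper's own argument: the paper groups the $n$-th roots of unity by cyclotomic polynomials and uses that $\prod_{\zeta:\,\Phi_m(\zeta)=0}f(\zeta)={\rm Nr}_{\Q(\zeta_m)/\Q}f(\zeta_m)>0$ for $m>2$, which is exactly your conjugate-pairing observation, since that norm is positive precisely because $\Q(\zeta_m)$ is totally imaginary and its embeddings pair off under complex conjugation, giving factors $|f(\zeta)|^2$. Your direct pairing of $\{\zeta,\overline{\zeta}\}$ is just a more elementary phrasing of the same positivity, with the same bookkeeping of the real roots $1$ and $-1$ and the same reduction of (2) to (1) via the parity of $p^n$.
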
 

\begin{proof} 
For any $m\in \Z_{>2}$, we have 
$\prod_{\zeta;\, \Phi_m(\zeta)=0}f(\zeta)={\rm Nr}_{\Q(\zeta_m)/\Q}f(\zeta_m)>0$,
where $\zeta_m$ is an arbitrary taken primitive $m$-th root of unity. 
By ${\rm Res}(t^n-1,f(t))=\prod_{m\mid n}\prod_{\zeta;\, \Phi_m(\zeta)=0}f(\zeta)$, 
we obtain the assertion. 
\end{proof} 

\subsection{$p$-adic convergence}
The $p$-adic convergence theorem (\Cref{thm.res.conv}) for a polynomial may be proved by applying \Cref{prop.conv} to the compact module $H=\varprojlim \Z[t]/(f(t),t^{p^n}-1)$. 
Here, we give another proof by invoking the global field theory with modulus. 
For a number field $k$, let $I(k)$ and $P(k)$ denote the ideal group and the principal ideal group of $k$ respectively. 
In addition, for a divisor $\mf{M}=\prod_i \mf{p}_i^{e_i} \prod_j \infty_j$ of $k$, where $\mf{p}_i$'s are distinct prime ideals of $k$ with $e_i\in \Z_{>0}$ and $\infty_j$'s are distinct real places, 
set $I_\mf{M}(k)=\{\mf{a}\in I(k)\mid (\prod_i \mf{p}_i^{e_i},\mf{a})=1\}$ and 
$P_\mf{M}(k)=\{\mf{a}\in I_\mf{M}(k)\cap P(k)\mid \mf{a}\equiv 1\ {\rm mod}^*\ \mf{M}\}$, 
where $\mf{a}\equiv 1\ {\rm mod}^*\ \mf{M}$ means that there exists some $\alpha\in k$ such that (i) $\mf{a}=(\alpha)$ and the multiplicative $\mf{p}_i$-adic valuation satisfies $v_{\mf{p}_i}(\alpha-1)\geq e_i$ for all $\p_i$ and (ii) $\alpha>0$ at all $\infty_j$. 
Then we have the following. 

\begin{lem}[Artin reciprocity law, cf.~{\cite[Appendix \S 3, Theorem 1(i)]{Washington}}] \label{lem.GCFT} 
Let the notation be as above. Let $k'/k$ be a finite abelian extension and suppose that the conductor $\mf{f}$ of $k'/k$ divides $\mf{M}$. 
Then there is a natural isomorphism called Artin's reciprocity map
\[I_\mf{M}(k)/P_\mf{M}(k){\rm Nr}_{k'/k}(I_\mf{M}(k'))\congto {\rm Gal}(k'/k).\]
\end{lem}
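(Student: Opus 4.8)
The plan is to realize the displayed isomorphism as the Artin reciprocity map. Since the target ${\rm Gal}(k'/k)$ receives an abelian group and the conductor $\mf{f}$ is defined for abelian extensions, I may assume $k'/k$ is abelian (this is the setting in which the statement is framed). For each prime ideal $\mf{p}$ of $k$ not dividing the finite part of $\mf{M}$, hence unramified in $k'/k$ because $\mf{f}\mid\mf{M}$, I would attach the Frobenius automorphism $\left(\frac{k'/k}{\mf{p}}\right)\in{\rm Gal}(k'/k)$; abelianness makes it independent of the choice of a prime of $k'$ above $\mf{p}$. Extending multiplicatively over $I_\mf{M}(k)$ defines the Artin map $\psi:I_\mf{M}(k)\To{\rm Gal}(k'/k)$, and the content of the lemma is that $\psi$ is surjective with kernel exactly $P_\mf{M}(k)\Nr_{k'/k}(I_\mf{M}(k'))$.

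Next I would dispose of the parts that are formal. Surjectivity follows because, as $\mf{p}$ ranges over the primes unramified in $k'/k$, the Frobenius elements $\left(\frac{k'/k}{\mf{p}}\right)$ exhaust ${\rm Gal}(k'/k)$ — every element occurs, by the Chebotarev density theorem — so $\psi$ is onto. The inclusion $\Nr_{k'/k}(I_\mf{M}(k'))\subseteq\ker\psi$ is also direct: for a prime $\mf{P}$ of $k'$ over $\mf{p}$ with residue degree $f$, one has $\Nr_{k'/k}(\mf{P})=\mf{p}^{f}$, while $\left(\frac{k'/k}{\mf{p}}\right)$ generates the decomposition group of $\mf{p}$, which for an unramified prime has order $f$; hence $\psi(\Nr_{k'/k}\mf{P})=\left(\frac{k'/k}{\mf{p}}\right)^{f}=1$.

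The substance, and the main obstacle, is the reciprocity statement itself: that every principal ideal $(\alpha)$ with $\alpha\equiv1\ {\rm mod}^*\ \mf{M}$ lies in $\ker\psi$, i.e.\ $P_\mf{M}(k)\subseteq\ker\psi$, together with the matching index computation $[I_\mf{M}(k):P_\mf{M}(k)\Nr_{k'/k}(I_\mf{M}(k'))]=[k':k]$. These are precisely Artin's reciprocity law and the first and second fundamental inequalities of class field theory; I would invoke them either through the idelic formulation, passing from the idele class group to the ray class group $I_\mf{M}(k)/P_\mf{M}(k)$ and comparing norm subgroups, or through the classical cohomological development. Once $P_\mf{M}(k)\Nr_{k'/k}(I_\mf{M}(k'))\subseteq\ker\psi$ is known and both sides of the induced surjection $I_\mf{M}(k)/P_\mf{M}(k)\Nr_{k'/k}(I_\mf{M}(k'))\surj{\rm Gal}(k'/k)$ have the same order $[k':k]$, the map is forced to be an isomorphism, yielding the claim. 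Because this is exactly the cornerstone of global class field theory, in practice I would simply cite it as the authors do rather than reproduce the full argument.
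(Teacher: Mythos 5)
Your proposal is correct and in line with the paper's treatment: the paper gives no proof of this lemma at all, simply quoting it from Washington's Appendix as classical class field theory, and your outline --- defining the Artin map $\psi$ via Frobenius elements, disposing of surjectivity (Chebotarev) and of the inclusion ${\rm Nr}_{k'/k}(I_\mf{M}(k'))\subseteq\ker\psi$ formally, then citing Artin reciprocity and the index computation $[I_\mf{M}(k):P_\mf{M}(k){\rm Nr}_{k'/k}(I_\mf{M}(k'))]=[k':k]$ for the rest --- is the standard reduction, whose hard core you also end up citing exactly as the authors do. The only point worth flagging is that the lemma's hypothesis should read finite \emph{abelian} extension for the conductor and the Artin map to be defined, a gap in the statement that you correctly noticed and repaired (and which is harmless for the paper's application to $\Q(\zeta_{p^n})/\Q$).
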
 

The following theorem on $p$-adic convergence yields alternative proofs of Theorems \ref{thm.field} and \ref{thm.3mfd} for several situations, as we will exhibit later. 

\begin{thm} 
\label{thm.res.conv} 
Let $0\neq f(t)\in \Z[t]$. 
Then, the $p$-power-th cyclic resultants ${\rm Res}(t^{p^n}-1,f(t))$ converge in $\Zp$. 
The limit values are zero if and only if $p\mid f(1)$. 
In any case, if ${\rm Res}(t^{p^n}-1,f(t)) \neq 0$ for any $n$, then 
the non-$p$-parts of ${\rm Res}(t^{p^n}-1,f(t))$ converge to a non-zero value in $\Zp$. 
For each prime number $l$, 
similar assertions for the $l$-parts of ${\rm Res}(t^{p^n}-1,f(t))$ hold. 
\end{thm}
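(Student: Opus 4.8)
The plan is to analyze the factored form $\Res(t^{p^n}-1,f(t))=(-1)^{p^n\deg f}\,a_0^{p^n}\prod_i(\alpha_i^{p^n}-1)$ $p$-adically under a fixed embedding $\ol{\Q}\inj\Cp$, separating the $p$-part (controlled by elementary $p$-adic estimates) from the non-$p$ part (controlled by class field theory with modulus). First I would remove the content: writing $f=p^{\mu}\tilde f$ with $p\nmid\tilde f$, the scaling identity $\Res(t^{p^n}-1,p^{\mu}\tilde f)=p^{\mu p^n}\Res(t^{p^n}-1,\tilde f)$ shows that the non-$p$ parts of $f$ and $\tilde f$ coincide while, if $\mu>0$, the resultants themselves tend to $0$ in $\Zp$; since $\mu>0$ forces $p\mid f(1)$, this reduces the statement to the case $p\nmid f$.

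Assume now $p\nmid f$. I would sort the roots by $|\alpha_i|_p$ and first compute the valuation $v_p$ of the resultant. Combining the leading coefficient with the roots of absolute value $>1$ gives $(a_0\prod_{|\alpha_i|_p>1}\alpha_i)^{p^n}\prod_{|\alpha_i|_p>1}(1-\alpha_i^{-p^n})$, and multiplicativity of the Gauss norm yields $|a_0\prod_{|\alpha_i|_p>1}\alpha_i|_p=1$, so this contributes a $p$-adic unit; the roots with $|\alpha_i|_p<1$ contribute units too, since their factors tend to $-1$. Among the roots with $|\alpha_i|_p=1$, only those with $|\alpha_i-1|_p<1$, i.e.\ $\alpha_i\in 1+\m$, contribute to $v_p$: for these $\alpha_i^{p^n}-1=\exp(p^n\log\alpha_i)-1$ has valuation $n+v_p(\log\alpha_i)\to\infty$. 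Hence $v_p(\Res(t^{p^n}-1,f))\to\infty$ precisely when such a root exists, and comparison with $|f(1)|_p=\prod_{|\alpha_i-1|_p<1}|\alpha_i-1|_p$ gives the criterion that the limit is $0$ iff $p\mid f(1)$. The same computation shows the non-$p$ part of each $\exp(p^n\log\alpha_i)-1$ converges through the $p$-adic logarithm, which is what produces the nonzero limit of the non-$p$ part.

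The decisive point is the convergence of the non-$p$ part, for which I would invoke Lemma \ref{lem.GCFT}. Writing $R_n=\Res(\Phi_{p^n},f)=\Nr_{\Q(\zeta_{p^n})/\Q}(f(\zeta_{p^n}))$, so that $\Res(t^{p^n}-1,f)=\prod_{j\le n}R_j$, the prime-to-$p$ part of the ideal $(R_n)$ is a relative norm from $\Q(\zeta_{p^n})$ of an ideal coprime to $p$. Since $\Q(\zeta_{p^n})$ is the ray class field of $\Q$ of modulus $p^n\infty$, Lemma \ref{lem.GCFT} identifies the norm subgroup with $P_{p^n\infty}(\Q)$, so this ideal has a totally positive generator $\equiv 1\ {\rm mod}^{*}\,p^n\infty$; fixing the sign by Lemma \ref{lem.sgn}, the non-$p$ part of $R_n$ is $\equiv 1\pmod{p^n}$. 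Therefore the non-$p$ parts of the partial products $\prod_{j\le n}R_j$ form a $p$-adic Cauchy sequence and converge to a nonzero unit, and together with the $p$-part computation this gives convergence of $\Res(t^{p^n}-1,f)$ in $\Zp$. I expect this reciprocity step to be the main obstacle: one must pin down the conductor, restrict cleanly to the prime-to-$p$ part in the presence of roots in $1+\m$, and track the sign. An elementary alternative, avoiding class field theory, is to observe that Frobenius permutes the Galois-stable set of Teichm\"uller representatives of the unit roots, so that the oscillating factors $\zeta_i^{p^n}$ reorganize into a symmetric product that stabilizes modulo ever higher powers of $p$.

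Finally, for a prime $l\neq p$ I would show that $v_l(\Res(t^{p^n}-1,f))$ stabilizes for $n\gg 0$. Passing to $\Q_l$, a root that is an $l$-adic unit contributes a positive $v_l(\alpha_i^{p^n}-1)$ only when its reduction has $p$-power order $p^{j}$, and then for $n\ge j$ one has $\alpha_i^{p^n}-1=(\alpha_i^{p^j})^{p^{n-j}}-1$ with exponent $p^{n-j}$ a unit in $\Z_l$, so the $l$-valuation is constant; the remaining roots contribute $0$, and the only possibly unbounded term is a content contribution $(l^{e})^{p^n}$, which converges in $\Zp$ to the Teichm\"uller representative of $l^{e}$ exactly as in the case $|\alpha_i|_p>1$ above. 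Hence the $l$-part converges in $\Zp$, to a nonzero value whenever the resultants are nonzero, completing the argument.
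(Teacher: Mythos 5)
Your proposal is correct, and its decisive step coincides with the paper's: you telescope ${\rm Res}(t^{p^n}-1,f)=f(1)\prod_{j\le n}R_j$ with $R_j={\rm Nr}_{\Q(\zeta_{p^j})/\Q}f(\zeta_{p^j})$, strip off the $(1-\zeta_{p^j})$-part, and apply Lemma \ref{lem.GCFT} with modulus $p^j\infty$ (with positivity of norms from totally imaginary fields, as in the proof of Lemma \ref{lem.sgn}) to conclude that the non-$p$ part of $R_j$ is $\equiv 1 \bmod p^j$ --- this is exactly the paper's mechanism, including your replacement of $f(\zeta_{p^n})$ by $a_{p^n}=f(\zeta_{p^n})(1-\zeta_{p^n})^{-v_n}$ when $p\mid f(1)$. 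You genuinely diverge in two places. First, for the zero-limit criterion the paper argues purely mod $p$: $p\mid f(1)$ iff $f\equiv(1-t)g \bmod p$ iff $(1-\zeta_{p^n})\mid f(\zeta_{p^n})$, giving ${\rm Res}\to 0$, while conversely the congruences $R_j\equiv 1$ force a nonzero limit when $p\nmid f(1)$; your Newton-polygon/root-sorting analysis reaches the same conclusion but really belongs to the explicit formula (Theorem \ref{thm.res}) --- with the small caveat that $\exp(p^n\log\alpha_i)=\alpha_i^{p^n}$ requires $|\alpha_i-1|_p<p^{-1/(p-1)}$, and if $\log\alpha_i=0$ then $\alpha_i$ is a $p$-power root of unity and the resultant vanishes for large $n$, which is the excluded case; Lemma \ref{lem.converge}(2) gives exactly the valuation growth you need without invoking $\exp$. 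Second, for the $l$-parts with $l\neq p$ the paper stays inside class field theory: every prime of $\Q$ above $l$ occurring in the norm ideal has absolute norm $l^{f}\equiv 1 \bmod p^n$, so the $l$-part of each ratio is $\equiv 1 \bmod p^n$ and Cauchyness follows as before. Your alternative --- fixing an embedding $\ol{\Q}\inj\C_l$ and showing $v_l({\rm Res}(t^{p^n}-1,f))=e\,p^n+(\text{eventually constant})$, so that the $l$-part equals $(l^{e})^{p^n}l^{c}$ and converges to the Teichm\"uller representative of $l^{e}$ times $l^{c}$ --- is a genuinely different, more elementary route that avoids class field theory for this clause and yields the eventual shape of the $l$-valuation as a bonus, at the cost of the case-by-case sorting of roots (reductions of $p$-power order versus the rest) that the paper's uniform congruence argument sidesteps.
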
 

\begin{proof} 
If ${\rm Res}(t^{p^n}-1,f(t))=0$ for some $n$, so that the limit value is zero, then we have $\Phi_{p^m}(t)\mid f(t)$ for some $m\mid n$ and hence $p\mid f(1)$. 

Assume ${\rm Res}(t^{p^n}-1,f(t))\neq 0$. 
For each $n\in \Z_{>0}$, let $\zeta_{p^n}$ be an arbitrary taken primitive $p^n$-th root of unity. Then we have 
\[\dfrac{{\rm Res}(t^{p^{n}}-1,f(t))}{{\rm Res}(t^{p^{n-1}}-1,f(t))}=\prod_{0\leq i<p^n; (i,p)=1} f(\zeta_{p^n}^i)={\rm Nr}_{\Q(\zeta_{p^n})/\Q} f(\zeta_{p^n}).\]

If $p\mid f(1)$, then we have $f(t)\equiv (1-t)g(t)$ mod $p$ and $f(t)=(1-t)g(t)+ph(t)$ for some $g(t),h(t)\in \Z[t]$. 
Since $(1-\zeta_{p^n})$ is a unique prime ideal of $\Z[\zeta_{p^n}]$ dividing $(p)$, 
we have $p\mid {\rm Nr}_{\Q(\zeta_{p^n})/\Q} f(\zeta_{p^n})$, 
and hence ${\rm Res}(t^{p^n}-1,f(t))$ converges to zero in $\Zp$. 

Suppose instead that $p\nmid f(1)$. Let us prove that ${\rm Nr}_{\Q(\zeta_{p^n})/\Q} f(\zeta_{p^n}) \equiv 1$ mod $p^n$ for each $n\in \Z_{>1}$. 
Note that we have $(1-\zeta_{p^n})\nmid f(\zeta_{p^n})$ in $\Z[\zeta_{p^n}]$. 
Indeed, if $(1-\zeta_{p^n})\mid f(\zeta_{p^n})$, then 
$f(t)\equiv (1-t)g(t)$ mod $\Phi_{p^n}(t)$ and hence   
$f(1)\equiv 0$ mod $\Phi_{p^n}(1)=p$. 

By $(1-\zeta_{p^n})\nmid f(\zeta_{p^n})$ in $\Z[\zeta_{p^n}]$, we have $(f(\zeta_{p^n})) \in I_{p^n}(\Q(\zeta_{p^n}))$. 
Applying \Cref{lem.GCFT} for $K/k=\Q(\zeta_{p^n})/\Q$ and $\mf{M}=(p^n)\infty$, we obtain a natural isomorphism 
\[I_{p^n}(\Q)/P_{p^n\infty}(\Q){\rm Nr}_{\Q(\zeta_{p^n})/\Q}(I_{p^n}(\Q(\zeta_{p^n})))\congto {\rm Gal}(\Q(\zeta_{p^n})/\Q)\cong (\Z/p^n\Z)^\ast\]
sending ${\rm Nr}_{\Q(\zeta_{p^n})/\Q}f(\zeta_{p^n})$ to $1$ mod $p^n$. 
This means that ${\rm Nr}_{\Q(\zeta_{p^n})/\Q}f(\zeta_{p^n})\equiv 1$ mod $p^n$. 
Hence $({\rm Res}(t^{p^n}-1,f(t)))_n$ is a $p$-adic Cauchy sequence and converges in the $p$-adic completion $\Zp$ of $\Z$. In this case, the limit value is not zero. 

Even if $p|f(1)$, if we replace $f(\zeta_{p^n})$ in above by its non-$p$ part $a_{p^n}=f(\zeta_{p^n})\,(1-\zeta_{p^n})^{-v_n} \in I_{p^n}\Q(\zeta_{p^n})$, where $v_n=v_{(1-\zeta_{p^n})}(f(\zeta_{p^n}))$, 
then a similar argument shows that 
${\rm Nr}_{\Q(\zeta_{p^n})/\Q}a_{p^n}\equiv 1$ mod $p^n$ and hence the assertion on the non-$p$-parts of ${\rm Res}(t^{p^n}-1,f(t))$'s.  

For each $l\neq p$, if $p\nmid f(1)$, then since 
$|{\rm Nr}_{\Q(\zeta_{p^n})/\Q} f(\zeta_{p^n})|_l^{-1} 
={\rm Nr}_{\Q(\zeta_{p^n})/\Q} (f(\zeta_{p^n}))_l  
\equiv 1$ mod $p^n$ holds. 
Here, $(f(\zeta_{p^n}))_l$ 
denotes the $l$-part of the ideal $(f(\zeta_{p^n}))$ of $\Q(\zeta_{p^n})$, 
\red{that is, $(f(\zeta_{p^n}))_l$ is the product of prime divisors of $(f(\zeta_{p^n}))$ over $l$,}  
and ${\rm Nr}_{\Q(\zeta_{p^n})/\Q}$ also denotes the \red{absolute} norm map for ideals.   
A similar argument proves the assertion for the $l$-parts. 

The assertion for the $p$-parts is clear. 
\end{proof} 

\begin{rem} \label{rem.Asvin} 
In a study of $l$-adic convergence in Iwasawa towers of varieties over finite fields, 
G.~\!Asvin recently proved a result close to our heart by using a method different from ours; 
His result \cite[Corollary 5]{GAsvin2023ANT} asserts that if $f(t)$ and $g(t)$ are monic in $\Z_l[t]$, 
then ${\rm Res}(f(t), g(t^{l^{n+1}}))\equiv {\rm Res}(f(t), g(t^{l^n}))$ mod $l^{n+1}$. 
He derives the assertion from a variant of Fermat's little theorem due to Arnold--Zarelua \cite[Theorem 4]{Zarelua2008TMIS}; 
For $A\in {\rm M}_r(\Z_l)$, ${\rm tr}\,A^{l^{n+1}}\equiv {\rm tr}\,A^{l^n}$ mod $l^{n+1}$ holds. 
\end{rem} 

\subsection{Explicit formula}
Let $\C_p$ denote the $p$-adic completion of an algebraic closure of the $p$-adic numbers $\Q_p$ and fix an embedding $\ol{\Q}\inj \C_p$. Let $\ol{\Z}_p$ denote the closure of $p$-adic integers $\Zp$ in $\C_p$. 
Since extensions of $\F_p$ are cyclotomic extensions of degrees prime to $p$, 
an elementary $p$-adic number theory yields the following basic fact. 
\begin{lem}[{cf.~\cite[Lemma 2.10]{Ueki4}}] \label{lem.p-prime-th} 
If $\alpha\in \Cp$ satisfies $|\alpha|_p=1$, then there exists a unique root of unity $\zeta$ of order prime to $p$ 
satisfying $|\alpha-\zeta|_p<1$. 
\end{lem}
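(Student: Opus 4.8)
The plan is to pass to the residue field of $\Cp$ and lift back by Hensel's lemma. First I would observe that the hypothesis $|\alpha|_p=1$ says precisely that $\alpha$ is a unit of the valuation ring $\ol{\Z}_p=\{x\in\Cp\mid |x|_p\le 1\}$, whose maximal ideal is $\m=\{x\in\Cp\mid |x|_p<1\}$ and whose residue field $\ol{\Z}_p/\m$ is the algebraic closure $\ol{\F}_p$. Reducing $\alpha$ modulo $\m$ then yields a nonzero element $\ol{\alpha}\in\ol{\F}_p^\times$. Since $\ol{\F}_p=\bigcup_r\F_{p^r}$ and each $\F_{p^r}^\times$ is cyclic of order $p^r-1$, which is prime to $p$, the element $\ol{\alpha}$ is a root of unity of some order $N$ with $p\nmid N$.

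Next I would lift this root of unity. Because $p\nmid N$, the polynomial $X^N-1$ is separable modulo $p$, so its reduction is coprime to its derivative; Hensel's lemma therefore lifts $\ol{\alpha}$ to a unique $N$-th root of unity $\zeta\in\ol{\Z}_p$ with $\zeta\equiv\ol{\alpha}\pmod{\m}$. By construction $\zeta$ is a root of unity of order prime to $p$, and $\alpha\equiv\zeta\pmod{\m}$ gives $|\alpha-\zeta|_p<1$, which settles existence.

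For uniqueness, suppose $\zeta$ and $\zeta'$ are both roots of unity of order prime to $p$ with $|\alpha-\zeta|_p<1$ and $|\alpha-\zeta'|_p<1$. The ultrametric inequality then gives $|\zeta-\zeta'|_p<1$, that is, $\zeta\equiv\zeta'\pmod{\m}$. Letting $M$ be the least common multiple of their orders, which is again prime to $p$, both are $M$-th roots of unity; invoking the separability of $X^M-1$ modulo $p$ once more shows that reduction is injective on the $M$-th roots of unity, so $\zeta\equiv\zeta'\pmod{\m}$ forces $\zeta=\zeta'$.

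The only real content is the separability of $X^N-1$ modulo $p$ when $p\nmid N$, which simultaneously drives the Hensel lift in the existence step and the injectivity of reduction in the uniqueness step; everything else is the standard interplay between $\ol{\Z}_p$, its maximal ideal $\m$, and the residue field $\ol{\F}_p$. I expect this to be the only obstacle, and it is a mild one.
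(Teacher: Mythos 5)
Your proof is correct, and it is essentially the argument the paper is pointing to: the paper gives no inline proof but cites \cite[Lemma 2.10]{Ueki4} together with the remark that extensions of $\F_p$ are cyclotomic of degree prime to $p$, which is exactly your reduction to $\ol{\F}_p^\times$ followed by the Hensel (Teichm\"uller) lift along the separable polynomial $X^N-1$, with uniqueness from the ultrametric inequality and injectivity of reduction on roots of unity of order prime to $p$. The only point worth flagging is that you should use the full valuation ring $\{x\in\Cp\mid |x|_p\le 1\}$ (Henselian since $\Cp$ is complete of rank one) rather than the paper's $\ol{\Z}_p$, which is defined there as the closure of $\Zp$ in $\Cp$ — as you in effect do.
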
 

The following lemma is also elementary and classically known. 
\begin{lem} \label{lem.converge} 
Let $\alpha,\zeta \in \C_p$ 
with $|\alpha|_p=|\zeta|_p=1$. 

{\rm (1)} 
If $|\alpha-\zeta|_p<1$, then 
$\lim_{n\to \infty} (\alpha^{p^n} -\zeta^{p^n} )=0$ in $\Cp$. 

{\rm (2)} If $|\alpha-1|_p<1$, then 
$\lim_{n\to \infty} \frac{\alpha^{p^n} -1}{p^n}=\log \alpha$ in $\Cp$, 
where $\log$ denotes the $p$-adic logarithm defined by $\log (1+x)=\sum_{n=1}^\infty \frac{-(-x)^n}{n}$ on $\ol{\Z}_p$. 

{\rm (3)} If $|\alpha-1|_p<p^{-1/(p-1)}$, then $|\log \alpha|_p = |1-\alpha|_p$. 
\end{lem}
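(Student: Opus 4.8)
The plan is to treat the three parts in order, reducing each to a standard non-archimedean estimate and, where needed, iterating it. For part (1), I would write $\alpha^{p^n}-\zeta^{p^n}=(\alpha^{p^{n-1}})^p-(\zeta^{p^{n-1}})^p$ and isolate the single-step contraction coming from raising to the $p$-th power. Setting $a=\alpha^{p^{n-1}}$ and $b=\zeta^{p^{n-1}}$, I factor $a^p-b^p=(a-b)S$ with $S=\sum_{i=0}^{p-1}a^{p-1-i}b^i$; since $|a-b|_p<1$ while $|a|_p=|b|_p=1$, comparing $S$ with $pb^{p-1}$ term by term gives $|S-pb^{p-1}|_p\le|a-b|_p$ and hence $|S|_p\le\max(|a-b|_p,|p|_p)<1$. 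Thus $|a^p-b^p|_p\le|a-b|_p\max(|a-b|_p,p^{-1})$. After finitely many steps the distance drops to at most $p^{-1}$, beyond which it contracts by the factor $|p|_p=p^{-1}$ at every further step, so $|\alpha^{p^n}-\zeta^{p^n}|_p\to 0$.

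For part (2), set $\beta_n=\alpha^{p^n}$; by part (1) with $\zeta=1$ we have $\beta_n\to 1$, so $|\beta_n-1|_p\to 0$. I would use the homomorphism property of the $p$-adic logarithm on $1+\m$, where $\m$ is the maximal ideal of $\ol{\Z}_p$, which gives $\log\beta_n=p^n\log\alpha$, together with the elementary limit $\frac{\log(1+w)}{w}\to 1$ as $w\to 0$ (immediate from $\log(1+w)=w(1-w/2+\cdots)$ and the ultrametric). For $n$ large, $|\beta_n-1|_p<p^{-1/(p-1)}$, so by part (3) below $\log\beta_n\neq 0$ and the quotient is legitimate; then
\[\frac{\alpha^{p^n}-1}{p^n}=\frac{\beta_n-1}{\log\beta_n}\cdot\frac{\log\beta_n}{p^n}=\frac{\beta_n-1}{\log\beta_n}\,\log\alpha\longrightarrow\log\alpha,\]
since the first factor tends to $1$.

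For part (3), write $x=\alpha-1$ with $|x|_p<p^{-1/(p-1)}$ and show that in $\log(1+x)=\sum_{m\ge1}\frac{(-1)^{m-1}x^m}{m}$ the leading term $x$ strictly dominates all others, so that the ultrametric forces $|\log\alpha|_p=|x|_p=|1-\alpha|_p$. This reduces to the inequality $\left|\tfrac{x^m}{m}\right|_p<|x|_p$ for $m\ge2$, that is $|x|_p^{m-1}<|m|_p$; writing $|x|_p=p^{-s}$ with $s>\tfrac1{p-1}$, this becomes $s(m-1)>v_p(m)$, which follows from the standard estimate $v_p(m)\le\frac{m-1}{p-1}$ (the inequality is strict for $m\ge2$ because $s>\tfrac1{p-1}$ and $m-1\ge1$).

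The only genuinely delicate point is the uniformity of the contraction in part (1): one application of the $p$-th power map guarantees merely strict decrease of $|a-b|_p$, not geometric decay, so one must observe that after finitely many steps the distance falls to $p^{-1}$, after which the factor $|p|_p$ takes over and produces true convergence to $0$. Everything else is bookkeeping with the series for $\log$ and its homomorphism property, both classical on $\ol{\Z}_p$; in particular the convergence of the logarithm series for $|x|_p<1$ and its additivity on $1+\m$ are the facts I would cite rather than reprove.
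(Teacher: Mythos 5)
Your proof is correct, and for parts (1) and (3) it runs on the same engine as the paper's, just with different bookkeeping: for (1) the paper expands $\zeta^{p^n}=(\alpha-(\alpha-\zeta))^{p^n}$ binomially and tracks the error through an auxiliary sequence $\omega_{n+1}=\gcd\{p\omega_n,\omega_n^p\}$, while your factorization $a^p-b^p=(a-b)S$ with $|S|_p\leq\max(|a-b|_p,p^{-1})$ isolates exactly the same one-step contraction but makes the rate explicit; the uniformity point you flag (strict decrease alone does not suffice --- one squares until the distance falls to $p^{-1}$, after which the factor $|p|_p$ gives geometric decay) is precisely what is buried in the paper's $\omega_n$ recursion. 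For (3) both arguments reduce to $|x^m/m|_p<|x|_p$ for $m\geq 2$; the paper checks this only at the indices $m=p^k$ where the supremum is attained, you check it for all $m$ via $v_p(m)\leq (m-1)/(p-1)$ --- identical content. The genuine divergence is in (2): the paper interpolates $x\mapsto \alpha^x=\exp(x\log\alpha)$ on $\Zp$ and reads the limit off as the derivative at $x=0$, whereas you avoid $\exp$ entirely, combining the homomorphism identity $\log(\alpha^{p^n})=p^n\log\alpha$ with the elementary limit $\log(1+w)/w\to 1$. Your route buys a self-contained argument using only the logarithm, with none of the convergence-domain care that $\exp$ requires (the paper's one-line sketch quietly glosses the fact that $\exp(x\log\alpha)$ converges only for $|x|_p$ small). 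One degenerate case in your (2) needs a sentence: the claim that part (3) forces $\log\beta_n\neq 0$ fails if $\beta_n=\alpha^{p^n}=1$, i.e. if $\alpha$ is a $p$-power root of unity; but then $(\alpha^{p^n}-1)/p^n=0$ for all large $n$ and $\log\alpha=0$, so the assertion holds trivially --- split this case off before dividing by $\log\beta_n$, and the proof is complete.
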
 
\begin{proof} 
(1) 
Define a $(\omega_n)_n$ by $\omega_1={\rm gcd}\{p(\alpha-\zeta), (\alpha-\zeta)^p\}$ 
and $\omega_{n+1}={\rm gcd}\{p\omega_n, \omega_n^p\}$ for all $n$, 
where gcd of a finite subset $A\subset \ol{\Z}_p$ means the maximal power of a fixed uniformizer dividing all elements of $A$. 
Then we have $\zeta^{p^n}=(\alpha-(\alpha-\zeta))^{p^n}=(\alpha^{p} + p g_1(\alpha,\zeta) +(\alpha-\zeta)^p)^{p^{n-1}}
=(\alpha^p + \omega_1 h_1(\alpha,\zeta))^{p^{n-1}}
=\cdots 
=\alpha^{p^n} + \omega_n h_n(\alpha,\zeta)$ 
for some $g_i(\alpha, \zeta), h_i(\alpha,\zeta)\in \Z[\alpha,\zeta]$. 
Since $\lim_{n\to \infty} \omega_n=0$ and $|h_n(\alpha,\zeta)|_p\leq 1$, we have $\lim_{n\to \infty} |\alpha^{p^n} -\zeta^{p^n}|_p =0$. 

(2) If we put $\varepsilon=\alpha-1$, then 
by an elementary $p$-adic calculus assures that 
\[\lim_{n\to \infty}\frac{\alpha^{p^n}-1}{p^n}=\lim_{n\to \infty} \frac{(1+\varepsilon)^{p^n}-(1+\varepsilon)^0}{p^n}=
\frac{d}{dx}\exp((\log(1+\varepsilon))x)|_{x=0}
=\log (1+\varepsilon)=\log \alpha.\] 

(3) Put $\varepsilon=\alpha-1$. 
Then the strong triangle inequality yields 
\[|\log (1+\varepsilon)|_p=|\sum_{n=1}^\infty (-1)^{n-1}\varepsilon^n/n|_p \leq {\rm sup}\{|\varepsilon^{p^k}/p^k|_p \mid k\in \Z_{\geq 0}\}= |\varepsilon|_p,\] 
and the equality holds if the sequence $|\varepsilon^{p^k}/p^k|_p$ takes distinct values when $k$ moves. 
By the assumption, we have $|\varepsilon|_p>|\varepsilon^p/p|_p$, hence the equality. 
\end{proof}

The following explicit formula is a key to our numerical study. 

\begin{thm} \label{thm.res} 
Let $0\neq f(t)\in \Z[t]$ and let $p^\mu$ denote the maximal $p$-power dividing $f(t)$. 
Write $f(t)=a_0\prod_i (t-\alpha_i)$ in $\ol{\Q}[t]$ and note that $|p^{-\mu} a_0\prod_{|\alpha_i|_p>1} \alpha_i|_p=1$. 
Let $\xi$ and $\zeta_i$ denote the unique roots of unity of orders prime to $p$ 
satisfying $|p^{-\mu}a_0\prod_{|\alpha_j|_p>1} \alpha_j -\xi|_p<1$ and $|\alpha_i-\zeta_i|_p<1$ for each $i$ with $|\alpha_i|_p=1$. 

{\rm (1)} {\rm (i)} If $p\mid f(t)$, so that $\mu>0$, then $\lim_{n\to \infty} {\rm Res}(t^{p^n}-1,f(t)) =0$ holds in $\Zp$.  

{\rm (ii)} If $p\nmid f(t)$, so that $\mu=0$, then 
\[
\lim_{n\to \infty} {\rm Res}(t^{p^n}-1,f(t)) =
(-1)^{p\, {\rm deg}f + 
\#\{i\, \mid\, |\alpha_i|_p<1\}
} \xi \prod_{i;\,|\alpha_i|_p=1} (\zeta_i-1)
\] 
holds in $\Zp$, and the limit value is zero if and only if $\zeta_i=1$ for some $i$. 

{\rm (2)} In any case, the non-$p$ part 
${\rm Res}(t^{p^n}-1,f(t))_{\text{non-}p}= {\rm Res}(t^{p^n}-1,f(t))\, | {\rm Res}(t^{p^n}-1,f(t))|_p$ converges to 
\[
(-1)^{p\, {\rm deg}f +\#\{i\, \mid\, |\alpha_i|_p<1\}} 
\xi\, \bigl( \prod_{\substack{i;\, |\alpha_i|_p=1,\\ \ |\alpha_i-1|_p=1}} (\zeta_i-1)\bigr) 
\, p^{-\nu} \prod_{\substack{i;\, |\alpha_i|_p=1,\\ \  |\alpha_i-1|_p<1}} \log \alpha_i 
\] 
in $\Zp$, where $\log$ denotes the $p$-adic logarithm and 
$\nu \in \Z$ $\cup \{\infty\}$ is defined by 
$p^{-\nu}=\prod_{\substack{i;\, |\alpha_i-1|<1}} |\log \alpha_i|_p$. 
If all $\alpha_i$'s with $|\alpha_i-1|_p<1$ are sufficiently close to 1, that is, 
if they all satisfy $|\alpha_i-1|_p<p^{-1/(p-1)}$, then $p^\nu=|f(1)|_p^{-1}$ holds. 

Put $\lambda=\#\{i\mid |\alpha_i-1|_p<1\}$. 
Then these $\lambda, \mu, \nu$ are the Iwasawa invariants of $f(t)$, 
that is, $|{\rm Res}(t^{p^n}-1, f(t))|_p^{-1}=p^{\lambda n+\mu p^n +\nu}$ holds for any $n\gg 0$. 
\end{thm}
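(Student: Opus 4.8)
The plan is to start from the product formula $\mathrm{Res}(t^{p^n}-1,f(t))=(-1)^{p^n\deg f}a_0^{p^n}\prod_i(\alpha_i^{p^n}-1)$ recalled in Subsection 4.1 and to analyze the three groups of roots separately, according to whether $|\alpha_i|_p$ is $>1$, $=1$, or $<1$. First I would absorb the leading coefficient into the large roots: since $\alpha_i^{p^n}-1=\alpha_i^{p^n}(1-\alpha_i^{-p^n})$ when $|\alpha_i|_p>1$, the group-1 contribution together with $a_0^{p^n}$ becomes $\bigl(a_0\prod_{|\alpha_i|_p>1}\alpha_i\bigr)^{p^n}\prod_{|\alpha_i|_p>1}(1-\alpha_i^{-p^n})=p^{\mu p^n}A^{p^n}\prod_{|\alpha_i|_p>1}(1-\alpha_i^{-p^n})$, where $A=p^{-\mu}a_0\prod_{|\alpha_i|_p>1}\alpha_i$ is a $p$-adic unit and the last product tends to $1$. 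This immediately yields (1)(i): if $\mu>0$ then $p^{\mu p^n}\to0$ while every remaining factor is bounded in $|\cdot|_p$, so the resultant tends to $0$.

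For the case $\mu=0$ I would record the limiting behaviour of each group from Lemma \ref{lem.converge}: for $|\alpha_i|_p<1$ one has $\alpha_i^{p^n}\to0$, hence $\alpha_i^{p^n}-1\to-1$; for $|\alpha_i|_p=1$ with associated root of unity $\zeta_i$ one has $\alpha_i^{p^n}-\zeta_i^{p^n}\to0$ by Lemma \ref{lem.converge}(1). The essential difficulty is that the surviving unit factors $A^{p^n}$ and $\zeta_i^{p^n}$ do not converge along the full sequence; they are only eventually periodic, since $\xi$ and the $\zeta_i$ have order prime to $p$. The device that resolves this is to pass to the subsequence $n=me$, where $e$ is the multiplicative order of $p$ modulo the (prime-to-$p$) order $d$ of the finite group generated by $\xi$ and all $\zeta_i$; then $p^{me}\equiv1\pmod d$, so $\xi^{p^{me}}=\xi$ and $\zeta_i^{p^{me}}=\zeta_i$, and every factor converges. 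Along this subsequence $A^{p^{me}}\to\xi$ and $\alpha_i^{p^{me}}-1\to\zeta_i-1$, while $(-1)^{p^{me}\deg f}=(-1)^{p\deg f}$. Collecting the pieces gives the value $(-1)^{p\deg f+\#\{i:\,|\alpha_i|_p<1\}}\xi\prod_{i;\,|\alpha_i|_p=1}(\zeta_i-1)$. Because Theorem \ref{thm.res.conv} already guarantees that the full sequence converges, its limit must coincide with this subsequential limit, proving (1)(ii); the limit vanishes exactly when some $\zeta_i=1$.

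For the non-$p$ part in (2) I would split the $|\alpha_i|_p=1$ roots into those with $|\alpha_i-1|_p=1$ (for which $\zeta_i-1$ and $\alpha_i^{p^n}-1$ stay units) and those with $|\alpha_i-1|_p<1$, i.e.\ $\zeta_i=1$. For the latter, Lemma \ref{lem.converge}(2) gives $(\alpha_i^{p^n}-1)/p^n\to\log\alpha_i\neq0$ (nonvanishing follows from $\mathrm{Res}(t^{p^n}-1,f(t))\neq0$, which forbids any $\alpha_i$ from being a $p$-power root of unity), so $|\alpha_i^{p^n}-1|_p=p^{-n}|\log\alpha_i|_p$ for $n\gg0$. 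Counting valuations across all groups yields $|\mathrm{Res}(t^{p^n}-1,f(t))|_p^{-1}=p^{\lambda n+\mu p^n+\nu}$ with $\lambda=\#\{i:|\alpha_i-1|_p<1\}$ and $p^{-\nu}=\prod_{i;\,|\alpha_i-1|_p<1}|\log\alpha_i|_p$, which is the Iwasawa formula. Multiplying the resultant by $|\mathrm{Res}|_p=p^{-\lambda n-\mu p^n-\nu}$ cancels the factor $p^{\mu p^n}$ and converts $\prod(\alpha_i^{p^n}-1)$ over the group $\zeta_i=1$ into $p^{-\lambda n}\cdot p^{\lambda n}\prod\frac{\alpha_i^{p^n}-1}{p^n}$; evaluating the subsequential limit exactly as above produces the stated expression $(-1)^{p\deg f+\#\{i:\,|\alpha_i|_p<1\}}\xi\bigl(\prod_{i;\,|\alpha_i|_p=1,\,|\alpha_i-1|_p=1}(\zeta_i-1)\bigr)p^{-\nu}\prod_{i;\,|\alpha_i|_p=1,\,|\alpha_i-1|_p<1}\log\alpha_i$. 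Finally, under the hypothesis $|\alpha_i-1|_p<p^{-1/(p-1)}$, Lemma \ref{lem.converge}(3) gives $|\log\alpha_i|_p=|\alpha_i-1|_p$, and computing $|f(1)|_p$ directly from $f(1)=a_0\prod_i(1-\alpha_i)$ — where the large roots contribute $|\alpha_i|_p$ and combine with $a_0$ into a unit when $\mu=0$, while only the roots with $|\alpha_i-1|_p<1$ contribute a proper power of $p$ — yields $p^\nu=|f(1)|_p^{-1}$ (for $\mu=0$, as in the short version; in general the extra $p$-content $p^\mu$ enters).

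The main obstacle, as flagged, is the non-convergence of the individual unit factors $A^{p^n}$ and $\zeta_i^{p^n}$; the whole argument hinges on combining the subsequence trick with the a priori convergence furnished by Theorem \ref{thm.res.conv}, rather than attempting to establish convergence factor by factor.
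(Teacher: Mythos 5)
Your proposal is correct and takes essentially the same route as the paper: the same product formula and three-way split of the roots by $|\alpha_i|_p$, the same use of Lemmas \ref{lem.p-prime-th} and \ref{lem.converge}, and the same key device of combining the a priori convergence from Theorem \ref{thm.res.conv} with the periodicity of the unit factors $\xi^{p^n}\prod_i(\zeta_i^{p^n}-1)$ (the paper phrases this as ``periodic and convergent, hence constant'' rather than via your subsequence $n=me$, but it is the identical idea). The only minor deviations are that you obtain the Iwasawa formula $|{\rm Res}(t^{p^n}-1,f(t))|_p^{-1}=p^{\lambda n+\mu p^n+\nu}$ by direct valuation counting where the paper invokes the $p$-adic Weierstrass preparation theorem, and that you correctly flag that $p^\nu=|f(1)|_p^{-1}$ holds literally only when $\mu=0$, consistent with the paper's opening reduction to that case.
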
 

\begin{rem}
If $f(t)$ is monic and ${\rm deg}f$ is even, then our theorem 
recovers \cite[Proposition 2]{Kisilevsky1997PJM}: 
\[
{\rm Res}(t^{p^n}-1,f(t))_{\text{non-}p}=
(-1)^{\lambda} \bigl( \prod_{\substack{i;\, |\alpha_i-1|_p=1}} (1-\zeta_i)\bigr) \, p^{-\nu} \prod_{\substack{i;\, |\alpha_i-1|_p<1}} \log \alpha_i. 
\] 
\end{rem}

\begin{proof} It suffice to verify the case with $\mu=0$. 
By \Cref{lem.p-prime-th}, 
if $\alpha\in \Cp$ satisfies $|\alpha|_p=1$, then there exists a unique root of unity $\zeta$ of orders prime to $p$ 
satisfying $|\alpha-\zeta|_p<1$, and hence 
\Cref{lem.converge} (1) 
yields $\lim_{n\to \infty} \alpha^{p^n}-\zeta^{p^n}=0$ in $\Cp$. 
Note that 
\[{\rm Res}(t^{p^n}-1,f(t))=(-1)^{p^n{\rm deg}f}a_0^{p^n}\prod_i(\alpha_i^{p^n}-1)=(-1)^{p\,{\rm deg}f}a_0^{p^n}\prod_i(\alpha_i^{p^n}-1)\]
 for $n>0$. 
Since $p\nmid f(t)$, the Newton polygon verifies 
$|a_0\prod_{|\alpha_i|_p>1} \alpha_i|_p=1$. 
Hence we have 
\[
\begin{split}
\lim_{n\to \infty}a_0^{p^n}\prod_i(\alpha_i^{p^n}-1)
&=\lim_{n\to \infty}a_0^{p^n}\prod_{i;\, |\alpha_i|_p>1}(\alpha_i^{p^n}-1) \prod_{i;\, |\alpha_i|_p=1}(\alpha_i^{p^n}-1) \prod_{i;\, |\alpha_i|_p<1}(\alpha_i^{p^n}-1)\\ 
&=\lim_{n\to \infty} \xi^{p^n} \prod_{i;\, |\alpha_i|_p=1} (\zeta_i^{p^n}-1) \prod_{i;\, |\alpha_i|_p<1}(-1). 
\end{split}
\] 
Take 
$m\in \Z$ with $p\nmid m$ and $\xi^m=\zeta_i^m=1$ for all $i$, 
and note that $p^n\equiv 1$ mod $m$ holds if $n\equiv 0$ mod $\varphi(m)$. 
Since the sequence $(\xi^{p^n} \prod_i (\zeta_i^{p^n}-1))_n$ is periodic and converges by 
Theorem 
\ref{thm.res.conv}, 
we have $\xi^{p^n} \prod_i (\zeta_i^{p^n}-1) = \xi^{p^{\varphi(m)}} \prod_i (\zeta_i^{p^{\varphi(m)}}-1) =\xi \prod_i (\zeta_i-1)$ for any $n \in \Z_{\geq 0}$. 
Therefore, the limit value is $(-1)^{p\,{\rm deg}f+\#\{i\,\mid\, |\alpha_i|_p<1\}} \xi\prod_{i;\, |\alpha_i|=1}(\zeta_i-1)$. 

For each root $\alpha_i$ with $|\alpha_i-1|_p<1$, 
by \Cref{lem.converge} (3), we have 
$\lim_{n\to \infty} (\alpha_i^{p^n}-1)/p^n=\log \alpha_i$. 
If we put $p^{\nu}=|\prod_{\substack{i;\, |\alpha_i-1|<1}} \log \alpha_i|_p^{-1}$, then we obtain the limit value as asserted.  
In addition, if all $\alpha_i$'s with $|\alpha_i-1|_p<1$ are sufficiently close to 1, then by \Cref{lem.converge} (3), we have $p^\nu=|\prod_{\substack{i;\, |\alpha_i-1|<1}}  (\alpha_i-1)|=|f(1)|_p^{-1}$. 

The $p$-adic Weierstrass preparation theorem \cite[Theorem 7.3]{Washington} and a standard argument of Iwasawa theory show that there exists some $\lambda, \mu, \nu \in \Z$ satisfying the equality $f(1+T)$ $\dot{=}$ $p^\mu (T^\lambda+p(\text{lower\ terms}))$ up to multiplication by units in $\Zp[[T]]$ and 
$|{\rm Res}(t^{p^n}-1,f(t))|_p^{-1}=p^{\lambda n + \mu p^n + \nu}$ for any $n\gg 0$. These $\lambda, \mu, \nu$ clearly coincide with those in above. 
\end{proof}

\begin{rem} In the case of a $\Zp$-extension or a $\Zp$-cover, in general, 
Iwasawa's $\nu$ is the sum of several contributions: that of the torsion of the base space, that of the pseudo isomorphism between the Iwasawa/Alexander module and the standard module, and that given above. 
We will study examples with large $\nu$'s in Subsubsections \ref{sss.knot.nu} and \ref{sss.ec.nu}. 
\end{rem} 

\begin{rem} The Mahler measure of a polynomial $f(t)$ is defined by the integral along the unit circle as $m(f(t))=\int_{|z|=1}\log|f(z)|\frac{dz}{z}$ and coincides with the limit of the average of the values of $\log|f(z)|$ at roots of unity. 
Its $p$-adic analogue due to Besser--Deninger is given by Shnirel'man's integral, that is, the $p$-adic limit of the average of values at roots of unity of orders prime to $p$ (cf.~\cite{Ueki4}). 
Our $p$-adic limits in \Cref{thm.res} may be seen as $p$-adic analogues of the Mahler measures in another direction.
\end{rem} 

\begin{cor} \label{cor.res} 
Let $0\neq f(t)\in \Z[t]$. 

{\rm (1)} Suppose that $f(t)$ has leading coefficient 1. 
If $f(t)\equiv \Phi_m(t)$ mod $p$ for $m\in \Z_{>0}$ with $p\nmid m$, then 
\[\lim_{n\to \infty} {\rm Res}(t^{p^n}-1,f(t))=\Phi_m(1)=
\begin{cases}
l \text{\ if\ } m=l^e \text{\ for\ a\ prime\ number\ } l \text{\ and\ } e\in \Z_{>0}\\
1 \text{\ if\  otherwise}
\end{cases}\!\!\!\! \text{in}\ \Zp. \]

{\rm (2)} If $f(t)\equiv \xi\prod_i(t-\zeta_i)$ modulo a divisor of $(p)$, where $\xi$ and $\zeta_i$'s are roots of unity of order prime to $p$, then $\lim_{n\to \infty} {\rm Res}(t^{p^n}-1,f(t))=\xi\prod_i(1-\zeta_i)$ in $\Zp$ holds.  
\end{cor} 

\begin{proof} (1) By Hensel's lemma, $f(t)=\prod_i (t-\alpha_i)\equiv \Phi_m(t)$ mod $p$ implies that there is a bijection $\Xi$ between the sets of the roots of $f(t)$ and $\Phi_m(t)$ such that $|\alpha_i-\Xi(\alpha_i)|_p<1$ holds. 
By \Cref{thm.res} (1)(ii), we obtain   
$\lim_{n\to \infty} {\rm Res}(t^{p^n}-1,f(t))
= (-1)^{p\, \varphi(m)} \prod_{\zeta;\, \Phi_m(\zeta)=0} (\zeta-1)
=(-1)^{(p+1) \varphi(m)}\Phi_m(1)$. 
Indeed, 
if $m=2$, then by $p\nmid m$, $p$ is odd. If $m\neq 2$, then $\varphi(m)$ is even. 
In both cases, we have $(-1)^{(p+1) \varphi(m)}=1$, hence the assertion. 

The assertion (2) may be treated similarly.  
\end{proof} 

The following lemma is useful to study $\Zp$-covers in a $\wh{\Z}=\varprojlim_{n\in \Z} \Z/n\Z$\,-cover. 
\begin{lem} \label{lem.mpn}
Let $m\in \Z_{>0}$ with $p\nmid m$ and $\alpha\in \ol{\Q}$. Then 
\[{\rm Res}(t^{mp^n}-1,t-\alpha)
=\prod_{\zeta^m=1,\, \xi^{p^r}=1} (\zeta \xi -\alpha)
=\prod_{\xi^{p^r}=1}(\xi-\alpha^m)
={\rm Res}(t^{p^n}-1,t-\alpha^m).\] 
\end{lem}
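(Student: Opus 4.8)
The plan is to prove the displayed chain of equalities by re-indexing products over roots of unity, invoking the hypothesis $p\nmid m$ at two separate points; I read the exponents as $r=n$ throughout. Write $\mu_N$ for the group of $N$-th roots of unity in $\ol{\Q}$, so that by definition ${\rm Res}(t^N-1,t-\alpha)=\prod_{\omega\in\mu_N}(\omega-\alpha)$. Two bijections drive everything. Since $\gcd(m,p^n)=1$, the cyclic group $\mu_{mp^n}$ is the internal direct product of its subgroups $\mu_m$ and $\mu_{p^n}$, so multiplication $(\zeta,\xi)\mapsto\zeta\xi$ gives a bijection $\mu_m\times\mu_{p^n}\congto\mu_{mp^n}$. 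Since $\gcd(m,p)=1$, raising to the $m$-th power is an automorphism $\xi\mapsto\xi^m$ of $\mu_{p^n}$.

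Given these, the two outer equalities are purely formal. The first is the definition of the cyclic resultant followed by re-indexing $\prod_{\omega\in\mu_{mp^n}}(\omega-\alpha)$ along the first bijection, and the last is again just the definition of ${\rm Res}(t^{p^n}-1,t-\alpha^m)$. Hence all of the content sits in the middle equality.

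For the middle equality I would fix $\xi\in\mu_{p^n}$ and evaluate the inner product over $\zeta\in\mu_m$. The monic polynomial in $X$ with root set $\{\zeta\xi:\zeta^m=1\}$ is $X^m-\xi^m$, so evaluating at $X=\alpha$ gives $\prod_{\zeta^m=1}(\alpha-\zeta\xi)=\alpha^m-\xi^m$ and therefore $\prod_{\zeta^m=1}(\zeta\xi-\alpha)=(-1)^m(\alpha^m-\xi^m)$. Multiplying over $\xi\in\mu_{p^n}$ and re-indexing by $\eta=\xi^m$ via the second bijection converts the double product into
\[
(-1)^{(m+1)p^n}\prod_{\eta\in\mu_{p^n}}(\eta-\alpha^m),
\]
which is $(-1)^{(m+1)p^n}$ times the claimed $\prod_{\xi^{p^n}=1}(\xi-\alpha^m)$.

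The step I expect to be the real obstacle is precisely this leftover sign. Comparing directly, ${\rm Res}(t^{mp^n}-1,t-\alpha)$ equals $(-1)^{mp^n}(\alpha^{mp^n}-1)$ while ${\rm Res}(t^{p^n}-1,t-\alpha^m)$ equals $(-1)^{p^n}(\alpha^{mp^n}-1)$, so the identity holds on the nose exactly when $(m-1)p^n$ is even, i.e. when $p=2$ or $m$ is odd. I would therefore either record this mild parity hypothesis or note that in the intended application the identity is used factor-by-factor on a monic polynomial $f(t)=\prod_i(t-\alpha_i)$, where the per-factor sign accumulates to $(-1)^{(m-1)p^n\,\deg f}$ and cancels whenever $\deg f$ is even, as for the Alexander polynomial of a knot. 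The two bijective re-indexings are routine; keeping the $(-1)^{\deg}$ factors honest is the only delicate bookkeeping.
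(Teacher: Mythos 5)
Your proof is correct, and in fact the paper offers no proof at all: Lemma \ref{lem.mpn} is stated as an immediate observation, so the re-indexing argument you give --- decomposing $\mu_{mp^n}\cong\mu_m\times\mu_{p^n}$ from $\gcd(m,p)=1$, and using that $\xi\mapsto\xi^m$ is an automorphism of $\mu_{p^n}$ --- is exactly the intended one, and you are right to read the $p^r$ in the display as a typo for $p^n$. Your sign bookkeeping is a genuine refinement of the statement rather than a quibble: with the paper's convention ${\rm Res}(t^N-1,f(t))=\prod_{\zeta^N=1}f(\zeta)$, one has ${\rm Res}(t^{mp^n}-1,t-\alpha)=(-1)^{mp^n}(\alpha^{mp^n}-1)$ while ${\rm Res}(t^{p^n}-1,t-\alpha^m)=(-1)^{p^n}(\alpha^{mp^n}-1)$, so the displayed chain holds verbatim for all $\alpha$ precisely when $(m-1)p^n$ is even; and since the hypothesis $p\nmid m$ already forces $m$ odd when $p=2$, your parity condition ``$p=2$ or $m$ odd'' reduces simply to ``$m$ odd''. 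For even $m$ (necessarily $p$ odd) the two ends really do differ by $-1$: taking $m=2$, $p=3$, $n=1$, $\alpha=2$ gives $63$ versus $-63$. You also correctly note that the paper's sole application of the lemma --- the $\Z/3\cdot 2^n\Z$-covers of the figure-eight knot, where $m=3$ --- lies in the safe case, and that when the lemma is applied factor-by-factor to a polynomial of even degree (as for Alexander polynomials of knots) the per-factor discrepancy $(-1)^{(m-1)p^n}$ cancels regardless. So there is no gap on your side; if anything, the lemma as printed should carry either the hypothesis that $m$ is odd or the correcting sign $(-1)^{(m-1)p^n}$ in the middle equality.
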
 

\section{Knots} 
In this section, we apply our theorems to $\Zp$-covers of knots to examine concrete examples and point out remarks on analogues of Weber's class number problem. 

\subsection{Alexander polynomial and Fox--Weber's formula}
Let $K$ be a knot in $S^3$ and let $M_n\to S^3$ denote the branched $\Z/n\Z$-cover, 
that is, the Fox completion of the $\Z/n\Z$-cover $X_n\to X=S^3-K$. 
Let $\Delta_K(t)$ denote the Alexander polynomial of $K$ normalized by $\Delta_K(1)=1$.
If $\Delta_K(t)$ does not vanish on $n$-th roots of unity, then we have 
\begin{prop}[Fox--Weber's formula, cf.~\cite{Weber1979}] \label{prop.FoxWeber}
\[|H_1(M_n)|=|H_1(X_n)_{\rm tor}|=|{\rm Res}(t^n-1,\Delta_K(t))|.\]
\end{prop}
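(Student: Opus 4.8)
The plan is to pass to the infinite cyclic cover and reduce both $|H_1(M_n)|$ and $|H_1(X_n)_{\rm tor}|$ to the order of a single finite module quotient over $\Lambda=\Z[t,t^{-1}]$. Write $\mca{A}=H_1(X_\infty)$ for the Alexander module of $K$, a finitely generated $\Lambda$-torsion module which (because $K$ is a knot) admits a \emph{square} presentation matrix $A(t)$ with $\det A(t)\doteq\Delta_K(t)$, equality up to a unit $\pm t^j$, so that its order ideal is the principal ideal $(\Delta_K(t))$. First I would invoke the Sakuma--Wang exact sequence already used in the proof of Lemma \ref{lem.3mfd},
\[0\to \mca{A}/(t^n-1)\mca{A}\to H_1(X_n)\to \Z\to 0,\]
which splits since its quotient is free. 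By hypothesis $\Delta_K(\zeta)\neq 0$ for every $n$-th root of unity $\zeta$, equivalently $\gcd(\Delta_K(t),t^n-1)=1$, so $\mca{A}/(t^n-1)\mca{A}$ is finite; it is therefore exactly $H_1(X_n)_{\rm tor}$, the free part being the displayed $\Z$. This yields $|H_1(X_n)_{\rm tor}|=|\mca{A}/(t^n-1)\mca{A}|$.

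The central step is the evaluation $|\mca{A}/(t^n-1)\mca{A}|=|{\rm Res}(t^n-1,\Delta_K(t))|$. Reducing the square presentation matrix modulo $t^n-1$ exhibits $\mca{A}/(t^n-1)\mca{A}$ as the cokernel of $\overline{A}(t)$ acting on the free $\Z[t]/(t^n-1)$-module of rank $r$. The order of such a cokernel, when finite, equals the absolute value of the determinant of the multiplication-by-$\det\overline{A}(t)$ endomorphism of the rank-$n$ free $\Z$-module $\Z[t]/(t^n-1)$; since $\det\overline{A}(t)=\overline{\Delta_K(t)}$ and the eigenvalues of this multiplication map are exactly the values $\Delta_K(\zeta)$ over the $n$-th roots of unity $\zeta$, this determinant is $\prod_{\zeta^n=1}\Delta_K(\zeta)={\rm Res}(t^n-1,\Delta_K(t))$, up to a unit of absolute value $1$ coming from the ambiguity $\det A\doteq\Delta_K$.

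I expect this norm-of-determinant identity, together with the verification that finiteness of the cokernel is equivalent to the non-vanishing hypothesis, to be the main obstacle: it is the point where the leading-coefficient bookkeeping hidden in the resultant and the fact that $\Lambda$ is not a PID (so $\mca{A}$ need not be cyclic and one must genuinely use the square matrix rather than a single generator) have to be handled with care. Taking absolute values throughout is what lets the unit ambiguity $\pm t^j$, which evaluates to absolute value $1$ at roots of unity, drop out harmlessly.

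Finally I would compare the branched and unbranched covers. The branched cover $M_n$ is obtained from $X_n$ by Dehn filling its single toral boundary component along the lift $\tilde\mu$ of the meridian of $K$, that is, along the curve generating the $\Z$-summand in the Sakuma--Wang sequence (it links the branch knot $\tilde K$ once, so maps to a generator of that $\Z$). Attaching the filling $2$-cell kills exactly $[\tilde\mu]$ in homology and introduces no further relations on $H_1$, whence $H_1(M_n)\cong \mca{A}/(t^n-1)\mca{A}\cong H_1(X_n)_{\rm tor}$; the normalization $\Delta_K(1)=1$ serves as the consistency check, giving $|H_1(M_1)|=|{\rm Res}(t-1,\Delta_K)|=|\Delta_K(1)|=1$ for $M_1=S^3$. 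Passing to orders in these isomorphisms gives the two asserted equalities.
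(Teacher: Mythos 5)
The paper does not actually prove Proposition \ref{prop.FoxWeber}: it is stated as a classical fact with the citation to Weber's paper, and the only observation the authors add is the sign analysis via Lemma \ref{lem.sgn}. So the relevant comparison is with the standard proof in the literature, and your argument is essentially a correct reconstruction of it. All the main steps are sound: the split Sakuma--Wang sequence $0\to \mca{A}/(t^n-1)\mca{A}\to H_1(X_n)\to \Z\to 0$; the observation that the hypothesis $\Delta_K(\zeta)\neq 0$ on $n$-th roots of unity makes the kernel finite, hence equal to $H_1(X_n)_{\rm tor}$; the existence of a square presentation matrix $A(t)$ (e.g.\ $tV-V^{T}$ for a Seifert matrix $V$) with $\det A(t)\doteq\Delta_K(t)$; and the norm identity $|\mathrm{coker}(\ol{A}(t))|=|\det_{\Z}(\ol{A}(t))|=|\det_{\Z}(\text{mult.\ by }\ol{\Delta_K})|=|\prod_{\zeta^n=1}\Delta_K(\zeta)|$, which is valid because $t^n-1$ is separable over $\Q$, so $\Z[t]/(t^n-1)\otimes\ol{\Q}\cong\prod_{\zeta^n=1}\ol{\Q}$ diagonalizes the multiplication map, and the unit ambiguity $\pm t^j$ has $\Z$-determinant $\pm1$. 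Your correctly identified ``main obstacle'' (that $\Lambda$ is not a PID, so one needs the square matrix rather than a cyclic generator) is exactly the point Fox's original computation and Weber's write-up handle the same way. One spot deserves tightening: the claim that $[\tilde\mu]$ maps to a generator of the $\Z$-quotient is justified by the heuristic ``it links $\tilde K$ once,'' whereas the clean argument is that the preimage of the meridian circle in the single boundary torus of $X_n$ is connected and covers $\mu$ with degree $n$, so under $\pi_1(X_n)\to n\Z\subset\Z=H_1(X)$ (the map classifying $X_\infty\to X_n$, which is the map to $\Z$ in the Wang sequence) the class $[\tilde\mu]$ goes to $n$, a generator of $n\Z$; then $\langle[\tilde\mu]\rangle$ meets the torsion trivially and $H_1(M_n)=H_1(X_n)/\langle[\tilde\mu]\rangle\cong \mca{A}/(t^n-1)\mca{A}$ follows from Mayer--Vietoris as you say. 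With that detail supplied, your proof is complete and matches the cited classical argument.
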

Since $\Delta_K(1)=1$, \Cref{lem.sgn} assures that 
we have ${\rm Res}(t^n-1,\Delta_K(t))<0$ if and only if $2\mid n$ and $\Delta_K(-1)<0$. 
Thus, both our Theorems \ref{thm.3mfd} and \ref{thm.res} apply. 
We will exhibit concrete examples in the succeeding subsections. 

We remark that Fox--Weber's formula has several variants (cf.~\!Sakuma \cite{Sakuma1979, Sakuma1981}, Mayberry--Murasugi \cite{MayberryMurasugi1982}, and Porti \cite{Porti2004} for links and graphs; 
Tange and the first author \cite{TangeRyoto2018JKTR, Ueki10} for representations of knot groups). 
We may replace $t^{p^n}-1$ by $(t^{p^n}-1)/{\rm gcd}(t^{p^n}-1,f(t))$ in Theorems \ref{thm.res.conv} and \ref{thm.res} and apply to these situations. 

\subsection{Torus knots} 
Let $(a,b)$ be a coprime pair of integers. 
The Alexander polynomial 
\[\Delta_{K}=\dfrac{(1-t)(1-t^{ab})}{(1-t^a)(1-t^b)}=\prod_{\substack{m\mid ab\\ m\nmid a,\, m\nmid b}}\Phi_m(t)\]
of the $(a,b)$-torus knot $K=T_{a,b}$ is the product of cyclotomic polynomials. 
\begin{figure}[h] 
\includegraphics[width=70mm
]{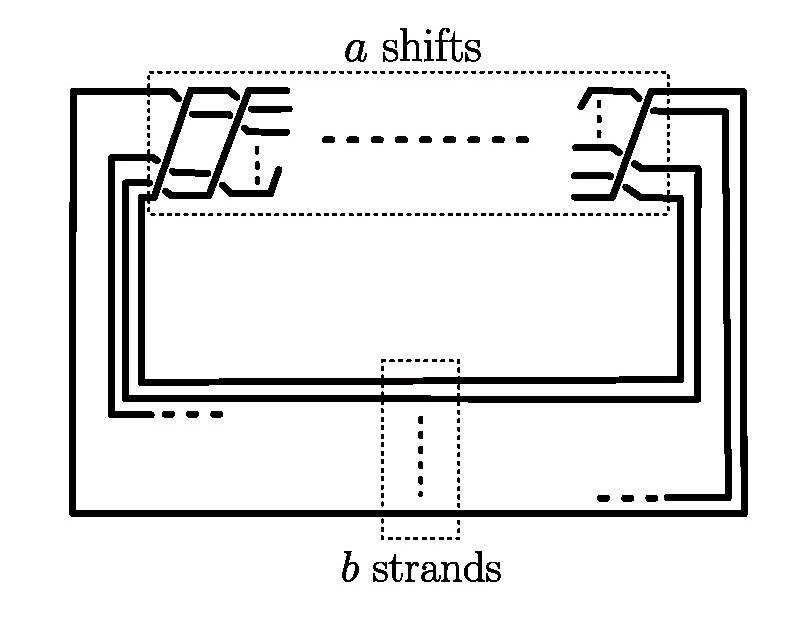} 
\caption*{Torus knot $T_{a,b}$} 
\end{figure} 
For each $n\in \Z_{>0}$, let $\varphi(n)$ denote Euler's totient function. 
We invoke Apostol's result; 
\begin{lem}[{\cite[Theorem 4]{Apostol1970pams}}] \label{Lem.Apostol} 
Suppose that $m>n>1$ and $(m,n)>1$. Then,  
${\rm Res}(\Phi_m,\Phi_n)=p^{\varphi(n)}$ if $m/n$ is a power of a prime $p$, and 
${\rm Res}(\Phi_m,\Phi_n)=1$ if otherwise. 
\end{lem}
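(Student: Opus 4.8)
The plan is to reduce the computation of ${\rm Res}(\Phi_m,\Phi_n)$ to the evaluation of cyclotomic polynomials at $1$ through a single Möbius inversion. Since the $\Phi$'s are monic, the convention of Subsection~4.1 gives ${\rm Res}(\Phi_m,\Phi_n)=\prod_{\zeta}\Phi_n(\zeta)$, the product being over the primitive $m$-th roots of unity $\zeta$. Rather than evaluate $\Phi_n$ directly, I would first compute the auxiliary resultants $R_d:={\rm Res}(\Phi_m,t^d-1)$ for each $d\mid n$, and then recover ${\rm Res}(\Phi_m,\Phi_n)$ from the factorization $t^n-1=\prod_{d\mid n}\Phi_d(t)$. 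Multiplicativity of the resultant in the second argument turns this into $R_n=\prod_{d\mid n}{\rm Res}(\Phi_m,\Phi_d)$, so multiplicative Möbius inversion yields
\[{\rm Res}(\Phi_m,\Phi_n)=\prod_{d\mid n}R_d^{\,\mu(n/d)}.\]

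The key computation is a closed form for $R_d$. Writing $\zeta=\zeta_m^k$ with $\gcd(k,m)=1$ and putting $g=\gcd(m,d)$, the map $\zeta\mapsto\zeta^d$ carries the primitive $m$-th roots of unity onto the primitive $(m/g)$-th roots of unity, each value attained exactly $\varphi(m)/\varphi(m/g)$ times. Hence
\[R_d=\prod_{\zeta}(\zeta^d-1)=\Bigl(\prod_{\eta}(\eta-1)\Bigr)^{\varphi(m)/\varphi(m/g)}=\bigl((-1)^{\varphi(m/g)}\Phi_{m/g}(1)\bigr)^{\varphi(m)/\varphi(m/g)},\]
the inner product running over the primitive $(m/g)$-th roots $\eta$. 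Because $d\le n<m$ we have $m/g>1$, so no factor vanishes, and I would invoke the classical evaluation $\Phi_N(1)=p$ when $N$ is a power of a single prime $p$ and $\Phi_N(1)=1$ otherwise (for $N>1$). The sign collapses: $\bigl((-1)^{\varphi(m/g)}\bigr)^{\varphi(m)/\varphi(m/g)}=(-1)^{\varphi(m)}$, independently of $d$.

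Assembling the pieces, the overall sign is $(-1)^{\varphi(m)\sum_{d\mid n}\mu(n/d)}$, and since $\sum_{d\mid n}\mu(n/d)=0$ for $n>1$ this is $+1$; thus ${\rm Res}(\Phi_m,\Phi_n)$ is positive. For the magnitude I would pass to $p$-adic valuations, noting that only those $d\mid n$ with $m/\gcd(m,d)$ a power of a single prime contribute, and then sum the signed exponents $\mu(n/d)\,\varphi(m)/\varphi(m/\gcd(m,d))$. The main obstacle is precisely this arithmetic bookkeeping. I expect to handle it by fixing a prime $p$, writing $m=p^aM$ and $n=p^bN$ with $p\nmid MN$, and identifying the divisors $d\mid n$ for which $m/\gcd(m,d)$ is a power of $p$: these are exactly the multiples of $M$ in $n$ with a prescribed $p$-adic valuation, which forces $M\mid n$, i.e.\ $m/n$ a power of $p$. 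In that case the surviving Möbius sum telescopes to the exponent $\varphi(n)$ at $p$ (and vanishes at every other prime), giving $p^{\varphi(n)}$; if $m/n$ has a second prime factor, the corresponding terms cancel in pairs and every exponent is $0$, giving $1$.
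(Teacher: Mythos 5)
Your argument is correct, but note that the paper offers no proof of this lemma at all: it is quoted directly from Apostol \cite[Theorem 4]{Apostol1970pams}, so the only meaningful comparison is with Apostol's original article. Your route --- passing to $R_d=\mathrm{Res}(\Phi_m,t^d-1)$ for $d\mid n$, evaluating it in closed form via the $\varphi(m)/\varphi(m/g)$-to-one map $\zeta\mapsto\zeta^d$ onto the primitive $(m/g)$-th roots ($g=\gcd(m,d)$), and recovering $\mathrm{Res}(\Phi_m,\Phi_n)=\prod_{d\mid n}R_d^{\mu(n/d)}$ by multiplicative M\"obius inversion from $R_d=\prod_{e\mid d}\mathrm{Res}(\Phi_m,\Phi_e)$ --- is essentially the classical proof, very much in the spirit of Apostol's own, resting on the same two pillars: the evaluation $\Phi_N(1)=p$ for $N$ a power of the prime $p$ and $\Phi_N(1)=1$ otherwise ($N>1$), and M\"obius cancellation. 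The supporting steps all check out: $d\le n<m$ guarantees $m\nmid d$ (so every $R_d\neq 0$ and the inversion is legitimate in $\Q^\times$) and $m/g>1$ (so $\Phi_{m/g}(1)$ is the right nonzero value), and the sign indeed collapses to $(-1)^{\varphi(m)\sum_{d\mid n}\mu(n/d)}=+1$ since $n>1$.

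One sentence should be repaired, though the surrounding plan already contains the fix. Writing $m=p^aM$, $n=p^bN$ with $p\nmid MN$, a divisor $d=p^vd'$ of $n$ contributes at $p$ exactly when $M\mid d'\mid N$ and $v\le a-1$; the existence of such $d$ forces only $M\mid N$, \emph{not} that $m/n$ is a power of $p$ (for $m=6$, $n=4$, $p=3$ the divisors $d=2,4$ both contribute but cancel), so the clause ``which forces $M\mid n$, i.e.\ $m/n$ a power of $p$'' is a non sequitur as written. The correct mechanism is: $\mu(n/d)=\mu(p^{b-v})\mu(N/d')$ vanishes unless $v\in\{b-1,b\}$, and
\[\sum_{M\mid d'\mid N}\mu(N/d')=\sum_{f\mid N/M}\mu\bigl((N/M)/f\bigr)=\begin{cases}1&\text{if }N=M,\\ 0&\text{otherwise,}\end{cases}\]
so the exponent of $p$ equals $\bigl[N{=}M\bigr]\bigl(\varphi(m)/\varphi(p^{a-b})-\varphi(m)/\varphi(p^{a-b+1})\bigr)$, the first term present only when $a>b$ and the second only when $b\ge 1$. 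This is nonzero precisely when $N=M$ and $a>b$, i.e.\ when $m/n=p^{a-b}$, and then it equals $p^b\varphi(M)-p^{b-1}\varphi(M)=\varphi(p^b)\varphi(M)=\varphi(n)$ (and $\varphi(M)=\varphi(n)$ directly when $b=0$); it vanishes at every other prime and in the non-integral and multi-prime cases, exactly as your ``cancel in pairs'' predicts. With that bookkeeping made explicit, your proof is complete and correct.
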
 

\begin{prop} \label{prop.torusknots}
Let $p$ be a prime number and let $(a,b)$ be a coprime pair of positive integers.
Assume that $p\nmid b$ and write $a=p^r a'$ with $r,a'\in \Z$, $p\nmid a'$. 
Let $(X_{p^n}\to X)_n$ denote the $\Zp$-cover of the exterior of the torus knot $T_{a,b}$ in $S^3$. 
Then 
$|H_1(X_{p^n})_{\rm tor}|=b^{p^{{\rm min}\{n,r\}}-1}$ holds for every $n\in\Z_{\geq0}$, and hence 
\[\lim_{n\to \infty}|H_1(X_{p^n})_{\rm tor}|=b^{p^r-1} \text{\ \ holds\ in\ }\Zp.\] 
In particular, for each pair $(a,b)$, we have $\lim_{n\to \infty}|H_1(X_{p^n})_{\rm tor}|=1$ for almost all $p$'s. 
\end{prop}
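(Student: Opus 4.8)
The plan is to reduce the statement to a single resultant via Fox--Weber's formula and then evaluate that resultant by factoring both polynomials into cyclotomics and applying Apostol's Lemma \ref{Lem.Apostol}. First I would observe that $\Delta_K=\prod_{m\mid ab,\,m\nmid a,\,m\nmid b}\Phi_m(t)$ does not vanish at any $p^n$-th root of unity: such a root is a primitive $p^j$-th root for some $0\le j\le n$, and since $p\nmid b$ and $a=p^ra'$ the conditions $p^j\mid ab$, $p^j\nmid a$, $p^j\nmid b$ are incompatible for $j\ge 1$, while $m=1$ is excluded. Hence Proposition \ref{prop.FoxWeber} gives $|H_1(X_{p^n})_{\rm tor}|=|{\rm Res}(t^{p^n}-1,\Delta_K)|$, and writing $t^{p^n}-1=\prod_{j=0}^n\Phi_{p^j}(t)$ and using multiplicativity of the resultant turns this into $\prod_m\prod_{j=0}^n{\rm Res}(\Phi_{p^j},\Phi_m)$, the outer product ranging over the $m$ with $\Phi_m\mid\Delta_K$.

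The heart of the argument is to identify which pairs $(j,m)$ contribute nontrivially. For each relevant $m$ I write $m=p^sm_0$ with $p\nmid m_0$. The coprimality $\gcd(a,b)=1$ together with $m\mid ab$, $m\nmid a$, $m\nmid b$ shows (after a short check) that \emph{no} prime power $l^e$ can satisfy these constraints — otherwise $l^e$ would divide $a$ or $b$ exclusively — so $\Phi_m(1)=1$ for every $\Phi_m\mid\Delta_K$, which kills all $j=0$ terms since ${\rm Res}(\Phi_1,\Phi_m)=\Phi_m(1)$. For $j\ge 1$, Apostol's Lemma \ref{Lem.Apostol} (with the elementary fact that the resultant is $1$ for coprime indices) shows ${\rm Res}(\Phi_{p^j},\Phi_{p^sm_0})$ is $1$ unless $p^j$ and $p^sm_0$ share a common factor and the larger is a prime-power multiple of the smaller; this forces $s=j\ge 1$ and $m_0=q^e$ a prime power with $q\neq p$. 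Then $m_0\mid a'b$ with $m_0\nmid a'$ and $\gcd(a',b)=1$ forces $q\mid b$ and $1\le e\le v_q(b)$, and in that case the lemma gives ${\rm Res}(\Phi_{p^s},\Phi_{p^sq^e})=q^{\varphi(p^s)}$, valid precisely when $1\le s\le\min\{n,r\}$ (using $s\le r$ from $m\mid ab$ and $s\le n$ from $\Phi_{p^s}\mid t^{p^n}-1$).

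Assembling the surviving factors then yields, assuming $b>0$ without loss of generality (otherwise replace $b$ by $|b|$, with positivity controlled by Lemma \ref{lem.sgn} since $\Delta_K(1)=1$),
\[
{\rm Res}(t^{p^n}-1,\Delta_K)=\prod_{s=1}^{\min\{n,r\}}\prod_{q\mid b}\prod_{e=1}^{v_q(b)}q^{\varphi(p^s)}=\prod_{s=1}^{\min\{n,r\}}b^{\varphi(p^s)}=b^{\sum_{s=1}^{\min\{n,r\}}\varphi(p^s)}.
\]
The telescoping identity $\sum_{s=1}^{N}\varphi(p^s)=p^{N}-1$ with $N=\min\{n,r\}$ then gives $|H_1(X_{p^n})_{\rm tor}|=b^{p^{\min\{n,r\}}-1}$, and since this sequence is eventually constant equal to $b^{p^r-1}$ once $n\ge r$, it converges to $b^{p^r-1}$ in $\Zp$.

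I expect the main obstacle to be the case analysis of the preceding paragraph rather than the final telescoping, which is routine: the delicate points are showing that the divisibility and coprimality constraints eliminate every prime-power modulus (so all $j=0$ and $s=0$ contributions trivialize) and that Apostol's lemma pins down the exact pairing $s=j$ with $m_0=q^e$, $q\mid b$. Getting these boundary cases airtight is where the care is needed.
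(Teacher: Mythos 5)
Your proof is correct and follows essentially the same route as the paper's: Fox--Weber's formula, factorization of $t^{p^n}-1$ and $\Delta_{T_{a,b}}$ into cyclotomic polynomials, Apostol's lemma to isolate the pairs $(\Phi_{p^s},\Phi_{p^sq^e})$ with $q\mid b$, and the telescoping sum $\sum_{s=1}^{N}\varphi(p^s)=p^N-1$. If anything, your treatment is more careful than the paper's, which leaves implicit the vanishing hypothesis for Fox--Weber, the sign, and the fact that the $i=0$ (resp.\ $j=0$) contributions are trivial because no prime-power modulus can divide $\Delta_{T_{a,b}}$ when $\gcd(a,b)=1$.
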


\begin{proof} Note that we have $\Delta_K(-1)=(1+t^a+\cdots t^{a(b-1)})/(1+t+\cdots+t^{b-1})|_{t=-1}=1$ or $b>0$ according as 
$2\nmid a$ or $2\mid a$. 
By Fox's formula and \Cref{lem.sgn}, we have 
\begin{equation*}
\begin{split}
|H_1(X_{p^n})_{\rm tor}|&={\rm Res}(t^{p^n}-1, \Delta_K(t))\\ 
&={\rm Res}(\prod_{0\leq i\leq n}\Phi_{p^i}(t), \prod_{\substack{m\mid ab\\ m\nmid a,\, m\nmid b}}\Phi_m(t))\\ 
&=\prod_{0\leq i\leq n} \prod_{\substack{m\mid ab\\ m\nmid a,\, m\nmid b}} {\rm Res}(\Phi_{p^i}(t), \Phi_m(t))).
\end{split}
\end{equation*}
Since $(a,b)$ is a coprime pair, \Cref{Lem.Apostol} assures that ${\rm Res}(\Phi_{p^i}(t), \Phi_m(t)))\neq 1$ if and only if $m=p^i l^j$ for some prime number $l$ and an integer $j\in  \Z_{>0}$ satisfying $l^j\mid b$. 
Let $v_l(b)$ denote the standard multiplicative $l$-adic valuation of $b$. 
Then, by $\sum_{0\leq i\leq n}\varphi(p^i)=p^n-1$, we have 

\begin{equation*}
\begin{split}
|H_1(X_{p^n})_{\rm tor}|
&=\prod_{0\leq i\leq {\rm min}\{n,r\}} \prod_{l\mid b} \prod_{0\leq j \leq v_l(b)} {\rm Res}(\Phi_{p^i}(t), \Phi_{p^i l^j}(t)))\\
&=\prod_{0\leq i\leq {\rm min}\{n,r\}} \prod_{l\mid b} \prod_{0 \leq j \leq v_l(b)} l^{\varphi(p^i)}\\
&=b^{p^{{\rm min}\{n,r\}}-1}. 
\end{split}
\end{equation*}
Hence we obtain the assertion. 
\end{proof}
 
\begin{eg}
Let $K=T_{2,3}=J(2,2)=3_1$ (trefoil). Then we have $\Delta_K(t)=t^2-t+1=\Phi_6(t)$.
We have ${\rm Res}(t^{2^n}-1,\Delta_K(t))=3=3^{2-1}$, ${\rm Res}(t^{3^n}-1,\Delta_K(t))=4=2^{3-1}$, and ${\rm Res}(t^{p^n}-1,\Delta_K(t))=1=3^{p^0-1}$ for $p\neq 2,3$ for all $n\in \Z_{>0}$. 
\end{eg}

\subsection{Twist knots} For each $m\in \Z$, the Alexander polynomial of the twist knot $K=J(2,2m)$ is given by 
$\Delta_{J(2,2m)}(t)=mt^2+(1-2m)t+m$.
The convention is due to \cite{HosteShanahan2004JKTR}, so that we have $J(2,2)=3_1$ and $J(2,-2)=4_1$ for instance. 
\begin{figure}[h]
\includegraphics[width=50mm
]{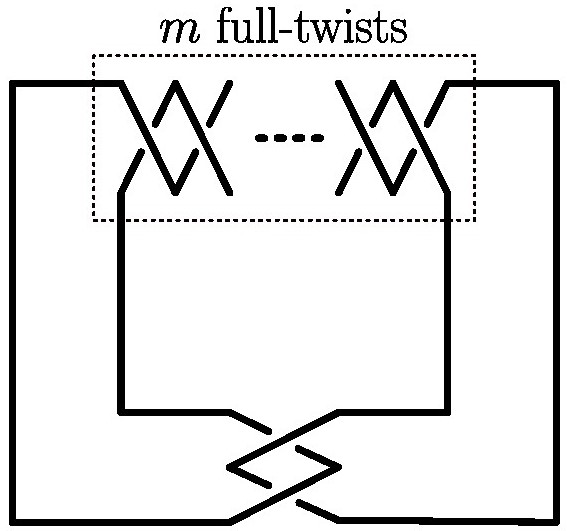} 
\caption*{Twist knot $J(2,2m)$}
\end{figure} 
\subsubsection{Observations for $K=4_1$} 
We first examine $K=4_1$ to demonstrate the usage of our results and raise questions. 

\begin{eg} Let $K=J(2,-2)=4_1$ (the figure-eight knot). Then we have $\Delta_K(t)=-t^2+3t-1$ and 
\begin{center}
\begin{tabular}{c|ccccccc}
$p$&2&3&5&7&$\cdots$ \ \\ \hline 
$\lim_{n\to \infty}|H_1(X_{p^n})_{\rm tor}|$&$-3$&$-2$&$-4$&$\sqrt{2}-2$&$\cdots$ \ 
\end{tabular}, 
\end{center} 
where $\alpha=\sqrt{2}\in \Z_7$ denotes the element satisfying $\alpha^2 =2$ and $\alpha\equiv 3$ mod 7. 
By using PARI/GP \cite{PARI2.15}, we may verify that 
\begin{center} 
\begin{tabular}{c|ccccccc}
$n$&1&2&3&4&5&6&$\cdots$\\
\hline 
${\rm Res}(t^{7^n}-1,\Delta_K(t))$ mod $7^n$&1&8&106&2164&4565&38179& $\cdots$ 
\end{tabular}.
\end{center} 
We have $\lim_{n\to \infty}|H_1(X_{p^n})_{\rm tor}| \in \Z$ only for $p=2,3,5$. 
\end{eg} 

\begin{proof} 
Since $\Delta_K(-1)=-5<0$, we have 
${\rm lim}_{n\to \infty}|H_1(X_{2^n})_{\rm tor}|=-{\rm lim}_{n\to \infty}{\rm Res}(t^{2^n}-1, \Delta_K(t))$ in $\Z_2$ and ${\rm lim}_{n\to \infty}|H_1(X_{p^n})_{\rm tor}|={\rm lim}_{n\to \infty}{\rm Res}(t^{p^n}-1, \Delta_K(t))$ in $\Z_p$ for $p\neq 2$ by \Cref{lem.sgn}. 


In what follows, we utilize \Cref{cor.res} (2) several times.   
If $p=2$, then $\Delta_K(t)\equiv t^2+t+1 =\Phi_3(t)$ mod 2, so 
${\rm lim}_{n\to \infty}|H_1(X_{2^n})_{\rm tor}|
=-{\rm lim}_{n\to \infty}{\rm Res}(t^{2^n}-1, \Delta_K(t))
=-\Phi_3(1)=-3$
in $\Z_2$. 

If $p=3$, then $\Delta_K(t)\equiv -(t^2+1)=-\Phi_4(t)$ mod 3, so 
${\rm lim}_{n\to \infty}|H_1(X_{3^n})_{\rm tor}|
={\rm lim}_{n\to \infty}{\rm Res}(t^{3^n}-1, \Delta_K(t))
=-\Phi_4(1)  
=-2$ in $\Z_3$. 
%
If $p=5$, then $\Delta_K(t)\equiv -(t+1)^2 = -\Phi_2(t)^2$ mod 5, so 
${\rm lim}_{n\to \infty}|H_1(X_{5^n})_{\rm tor}|
={\rm lim}_{n\to \infty}{\rm Res}(t^{5^n}-1, \Delta_K(t))
=-\Phi_2(1)^2 =-4$ in $\Z_5$. 

If $p=7$, then $\Delta_K(t)\equiv -((t+2)^2-3)\equiv -\phi_8^+$ mod 7, where $\Phi_8=t^4+1=\phi_8^+ \phi_8^-$, $\phi_8^\pm=t^2\pm \sqrt{2}t+1$. 
Let $\zeta$ be a primitive 8th root of unity satisfying $\zeta+\ol{\zeta}\equiv -4\equiv 3$ mod 7. 
Then ${\rm lim}_{n\to \infty}|H_1(X_{7^n})_{\rm tor}|=-(\zeta-1)(\ol{\zeta}-1)=-(2-(\zeta+\ol{\zeta}))=-(2-(\sqrt{2}))=-2+\sqrt{2}$. 

For a general $p$, 
let $\zeta\in \ol{\Q}$ be a root of unity with order prime to $p$ satisfying 
$\Delta_K(t)\equiv -(t-\zeta)(t-\zeta^{-1})$ mod $p$. 
Then we have $\lim_{n\to \infty}|H_1(X_{p^n})_{\rm tor}|=\pm(1-\zeta)(1-\zeta^{-1})$. 
Elementary argument shows that there exists a primitive $m$-th root of unity $\zeta\in \overline{\Q}$ such that $(1-\zeta)(1-\zeta^{-1})\in \Z$ if and only if $m=1,2,3,4,6$. 
If $p\geq 7$, then for any $m=1,2,3,4,6$-th root of unity $\zeta$, we have $\Delta_K(\zeta)
=-1,$ $5,$ $2\pm 2\sqrt{-3},$ $\pm\sqrt{-1},$ $\pm3\sqrt{-1},$ $-1\pm\sqrt{-3}$
$\not\equiv 0$ mod $p$, so $\pm(1-\zeta)(1-\zeta^{-1}) \not \in \Z$.   
Hence we have $\lim_{n\to \infty}|H_1(X_{p^n})_{\rm tor}| \in \Z$ if and only if $p=2,3,5$. 
\end{proof}

\begin{eg} Let us examine the $\Z/3^1 2^n \Z$-covers of $K=4_1$. 
Write $\Delta_K(t)=-t^2+3t-1=-(t-\alpha)(t-\beta)$. 
Put $\Delta_3(t)=-(t-\alpha^3)(t-\beta^3)$. 
Then we have $\Delta_3(t)=-t^2+(\alpha^3+\beta^3)t-\alpha^3\beta^3=-t^2+18t-1\equiv -(t-1)^2$ mod 2, 
$\Delta_3(1)=16>0$, $\Delta_3(-1)=-20<0$. 
By Lemmas \ref{lem.sgn} and \ref{lem.mpn}, we have 
$|H_1(X_{3\cdot2^n})_{\rm tor}|=-{\rm Res}(t^{3\cdot 2^n}-1,\Delta_K(t))
=-{\rm Res}(t^{2^n}-1,\Delta_3(t))$ for $n>0$. 
By $|\alpha^3+\beta^3|_2=$ $|18|_2=1/2$ and $\alpha^3\beta^3=1$, we have $|\alpha^3|_2=|\beta^3|_2=1$. 
Since $\alpha^3-1$ and $\beta^3-1$ are roots of 
$-\Delta_3(t)$ $=t^2-16t-16$, we see that 
$|\alpha^3-1|_2=|\beta^3-1|_2=1/4$ $<2^{-1/(2-1)}=1/2$ and hence $2^{-\nu}=|\Delta_3(1)|_2=2^{-4}$. 
Thus, by \Cref{thm.res} (2), we have that    
$|H_1(X_{3\cdot2^n})_{\text{non-}2}|
=|H_1(X_{3\cdot2^n})_{{\rm tor}}|\,2^{-(2n+4)}
=-{\rm Res}(t^{3\cdot 2^n}-1,$ $\Delta_K(t)) 2^{-(2n+4)}
=-{\rm Res}(t^{2^n}-1,$ $\Delta_3(t))2^{-(2n+4)}$,
and this value converges to 
$-\frac{(\log \alpha^3)(\log \beta^3)}{2^4}=\frac{-9}{16}\log\alpha\log\beta$, 
where $\log$ denotes the 2-adic logarithm extended to $\C_2$ so that $\log 2=0$. 
\begin{center} 
\begin{tabular}{c|cccccccccccc} 
$n$&0&1&2&3&4&5&6&7&8&9&10&$\cdots$\\ \hline 
$-{\rm Res}(t^{3\cdot 2^n}-1,\Delta_K(t)) 2^{-(2n+4)}$
&1&5&405&{\scriptsize 10498005}& 
$\cdots$&&&&&&&$\cdots$\\
mod $2^n$&1&1&1&5&5&21&21&85&213&213&213&$\cdots$\\
\end{tabular} 
\end{center} 
\end{eg} 

Kionke \cite{Kionke2020JLMS} discusses whether the $p$-adic Betti number belongs to $\Z$, 
as the $p$-adic analogue of Atiyah's conjecture. 
It also would be interesting to ask when the $p$-adic torsion belongs to $\Z$. 
The observations for $J(2,-2)=4_1$ raise the following questions, to which we will give answers in the rest of this subsection. 
\begin{q} \label{q.knots}
Consider the cyclic covers $X_n\to X=S^3-J(2,2m)$. 

(1)  
Find all pairs $(p,m)$ with $\lim_{n\to \infty} |H_1(X_{p^n})_{\rm tor}| \in \Z \subset \Zp$ and their limit values. 
Find all pairs $(p,m)$ with $\lim_{n\to \infty} |H_1(X_{p^n})_{\rm tor}|=1$. 

(2) Find conditions of $(e,p)$ with $p\nmid e$ such that $\lim_{n\to \infty} |H_1(X_{e p^n})_{\rm tor}| =0$ holds, and study the values of $\nu$. Can $\nu$ be arbitrarily large, with $|H_1(X_{e})_{\rm tor}|$ being small? 
\end{q} 

\subsubsection{Cases with $\lim |H_1(X_{p^n})_{\rm tor}| \in \Z$} 
We discuss both of the cases with $p\mid m$ and $p\nmid m$. 
\begin{prop} \label{prop.twistknots}
If $p\mid m$, then the $\Zp$-covers of $K=J(2,2m)$ satisfies 
\[\lim_{n\to \infty}|H_1(X_{p^n})_{\rm tor}|=
\begin{cases}
-{\rm sgn}(4m-1)&\text{if}\ p=2,\\
1&\text{if}\ p\neq 2
\end{cases} \text{\ \ in\ }\Zp.\] 
\end{prop}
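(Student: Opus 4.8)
The plan is to reduce the statement to the explicit formula of Theorem~\ref{thm.res} combined with the sign Lemma~\ref{lem.sgn}, so that everything comes down to tracking the $p$-adic sizes of the two roots of the degree-two Alexander polynomial together with one root of unity $\xi$. First I would record the basic data: writing $\Delta_K(t)=mt^2+(1-2m)t+m=m(t-\alpha)(t-\beta)$ gives $\Delta_K(1)=1$, $\Delta_K(-1)=4m-1$, $\alpha\beta=1$, and $\alpha+\beta=2-1/m$. Since $\gcd(m,1-2m)=1$, the content of $\Delta_K$ is $1$, so $p\nmid\Delta_K(t)$ and Iwasawa's $\mu$ vanishes; moreover $\Delta_K(1)=1$ is prime to $p$, so by Theorem~\ref{thm.res.conv} the limit is automatically a $p$-adic unit (in particular nonzero, and one checks $\Delta_K$ has no $p$-power root of unity as a zero, so no resultant vanishes). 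Because $p\mid m$, the Newton polygon of $\Delta_K$ has vertices $(0,v_p(m)),(1,0),(2,v_p(m))$ and slopes $\pm v_p(m)$; hence one root, say $\alpha$, satisfies $|\alpha|_p=|m|_p^{-1}>1$ and the other $|\beta|_p=|m|_p<1$, with no root of absolute value $1$. Thus $\#\{i:|\alpha_i|_p<1\}=1$ and the product $\prod_{|\alpha_i|_p=1}(\zeta_i-1)$ appearing in Theorem~\ref{thm.res} is empty.

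Next I would identify the root of unity $\xi$. The unit $a_0\prod_{|\alpha_j|_p>1}\alpha_j=m\alpha$ is a root of $X^2-(2m-1)X+m^2$, which reduces mod $p$ to $X(X+1)$, so the unit root satisfies $m\alpha\equiv-1\pmod p$. By Lemma~\ref{lem.p-prime-th} the associated $\xi$ (the unique root of unity of order prime to $p$ with $|m\alpha-\xi|_p<1$) therefore reduces to $-1$. This is exactly where the dichotomy originates: for odd $p$ one gets $\xi=-1$, whereas for $p=2$ we have $-1\equiv 1$ and hence $\xi=1$. Substituting into Theorem~\ref{thm.res}(1)(ii), with ${\rm deg}\,\Delta_K=2$ and the empty product, yields $\lim_{n}{\rm Res}(t^{p^n}-1,\Delta_K)=(-1)^{2p+1}\xi=-\xi$, that is $1$ for odd $p$ and $-1$ for $p=2$.

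Finally I would pass from the signed resultant to the group order via Fox--Weber's formula $|H_1(X_{p^n})_{\rm tor}|=|{\rm Res}(t^{p^n}-1,\Delta_K)|$ and Lemma~\ref{lem.sgn}(2). For odd $p$ the resultant has the sign of $\Delta_K(1)=1>0$, so $|H_1(X_{p^n})_{\rm tor}|$ equals the resultant itself and the limit is $1$, independently of $m$. For $p=2$ the sign is that of $\Delta_K(1)\Delta_K(-1)=4m-1$. I expect this last sign bookkeeping at $p=2$ to be the only non-formal point and the main obstacle: the signed limit is $-1$, but to conclude that $|H_1(X_{2^n})_{\rm tor}|\to-1$ rather than its negative one must control ${\rm sgn}(4m-1)={\rm sgn}(m)$ against that signed value, so the evenness and sign of $m$ forced by $2\mid m$ must be used with care. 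Everything else is a direct substitution into Theorem~\ref{thm.res}, with no further analytic input required.
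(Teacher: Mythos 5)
Your computation is correct, and its arithmetic core coincides exactly with the paper's: both arguments reduce to the observation that $m\alpha$ is the unit root of $(t-m\alpha)(t-m\beta)=t^2+(1-2m)t+m^2\equiv t(t+1)\bmod p$, whence $m\alpha\equiv -1$ and the limit of the resultants is $-\xi$ for $\xi$ the Teichm\"uller lift of $-1$ (so the limit is $1$ for odd $p$ and $-1$ for $p=2$). The packaging differs: the paper does not invoke Theorem \ref{thm.res} here at all, but argues bare-hands --- it reads $|\alpha|_p=|1/m|_p>1>|m|_p=|\beta|_p$ off $|\alpha+\beta|_p=|(2m-1)/m|_p$ (your Newton polygon gives the same dichotomy), then collapses $\lim_n m^{p^n}(\alpha^{p^n}-1)(\beta^{p^n}-1)=\lim_n -(m\alpha)^{p^n}$ using $\alpha\beta=1$, and lets the $p^n$-th powers converge to the Teichm\"uller representative directly. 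Your route through the explicit formula is a clean substitution (and generalizes mechanically to other quadratics), at the cost of the extra bookkeeping of $\xi$, the sign exponent $(-1)^{2p+1}$, and the empty unit-circle product; the paper's shortcut avoids even quoting Lemma \ref{lem.p-prime-th}. As for the sign issue you flag at $p=2$: your concern is accurate, but it is not a step the paper supplies and you missed. When $m<0$ one has $\Delta_K(-1)=4m-1<0$, so by Lemma \ref{lem.sgn} every ${\rm Res}(t^{2^n}-1,\Delta_K)$ is negative and the genuine orders $|H_1(X_{2^n})_{\rm tor}|=-{\rm Res}(t^{2^n}-1,\Delta_K)$ converge to $+1$, not $-1$; the paper's proof simply identifies $\lim_n|H_1(X_{p^n})_{\rm tor}|$ with $\lim_n{\rm Res}(t^{p^n}-1,\Delta_K)$ without performing the sign correction it carries out elsewhere (e.g.\ in the $4_1$ example), and its own $J(2,-4)$ table confirms this reading: the resultants $-9,-225,\dots$ satisfy ${\rm Res}\equiv -1\bmod 2^n$ while the positive orders $9,225,\dots$ tend to $+1$ in $\Z_2$. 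So the stated value $-1$ at $p=2$ should be read as the limit of signed resultants (or as implicitly assuming $m>0$); with that caveat made explicit, your proof is complete and matches the paper's.
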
 

\begin{proof}
Write $\Delta_K(t)=m(t-\alpha)(t-\beta)$. 
Then by $|\alpha+\beta|_p=|(2m-1)/m|_p=|1/m|_p>1$ and $\alpha\beta=1$, 
we may assume that $|\alpha|_p>|\beta|_p$ and hence $|\alpha|_p=|\alpha+\beta|_p=|1/m|_p$. 
Then ${\rm lim}_{n\to \infty} {\rm Res}(t^{p^n}-1,\Delta_K(t))={\rm lim}_{n\to \infty} m^{p^n}(\alpha^{p^n}-1)(\beta^{p^n}-1)
={\rm lim}_{n\to \infty} -(m \alpha)^{p^n}$. 
The minimal polynomial of $m\alpha$ is $(t-m\alpha)(t-m\beta)=t^2-m(\alpha+\beta)+m^2\alpha\beta=t^2+(1-2m)t+m^2\equiv t^2+t=t(t+1)$ mod $p$. 
Thus $m\alpha\equiv -1$ mod $p$ and hence ${\rm lim}_{n\to \infty}-(m \alpha)^{p^n}=-1$ if $p=2$, 
${\rm lim}_{n\to \infty}-(m \alpha)^{p^n}=1$ if $p\neq 2$ in $\Zp$. 
Note that if $p=2$, then we have an additional term ${\rm sgn}\Delta(-1)={\rm sgn}(4m-1)$ by \Cref{lem.sgn}. 
\end{proof}

\begin{eg} Let $K=J(2,4)=5_2$. Then $\Delta_K(t)=2t^2-3t+2$, $\Delta_K(-1)=7>0$, and 
\begin{center}
\begin{tabular}{c|cccccc}
$n$&1&2&3&4&$\cdots$\\
\hline  
${\rm Res}(t^{2^n}-1,\Delta_K(t))$&7&63&63&60543&$\cdots$\\
\hline 
mod $2^n$&1&3&7&15&$\cdots$
\end{tabular}.\\[2mm]
\end{center}

Let $K=J(2,-4)$. Then $\Delta_K(t)=-2t^2+5t-2$, $\Delta_K(-1)=-9<0$ and 

\begin{center} 
\begin{tabular}{c|cccccc}
$n$&1&2&3&4&$\cdots$\\
\hline  
$-{\rm Res}(t^{2^n}-1,\Delta_K(t))$&$-9$&$-225$ &$-65025$&$-4294836225$& $\cdots$\\
\hline 
mod $2^n$&1&1&1&1&$\cdots$
\end{tabular}.\\[2mm]
\end{center}

Let $K=J(2,6)$. Then $\Delta_K(t)=3t^2-5t+3$ and

\begin{center} 
\begin{tabular}{c|cccccc}
$n$&1&2&3&4&$\cdots$\\
\hline  
${\rm Res}(t^{3^n}-1,\Delta_K(t))$&64&{\scriptsize 18496} &{\scriptsize 30417519283264}&{\scriptsize 1729618048727305550814328969659247936576}& $\cdots$\\
\hline 
mod $3^n$&1&1&1&1&$\cdots$
\end{tabular}.
\end{center}
\end{eg}

\begin{prop} \label{prop.Atiyah} 
Suppose $p\nmid m$. Then the $\Zp$-cover of $K=J(2,2m)$ satisfies 
$\lim=\lim_{n\to \infty}|H_1(X_{p^n})_{\rm tor}| \in \Z$ in $\Zp$ 
if and only if one of the following holds. 
\begin{itemize}
\item $p=2$\,{\rm ;}\ \ $\lim={\rm sgn}(4m-1)\cdot 3=\pm 3$. 
\item $p=3$\,{\rm ;}\ \   $3\mid m-1$, $\lim =4$ or $3\mid m+1$, $\lim =-2$. 
\item $p=5$\,{\rm ;}\ \  $5\mid m+1$, $\lim =-4$. 
\item $p\neq 2,3$\,{\rm ;}\ \  $p\mid m-1$, $\lim =1$. 
\end{itemize} 
\end{prop} 

\begin{proof} In what follows, $x \equiv y$ stands for $x \equiv y$ mod $p$ if otherwise mentioned. 
Let $\xi$ and $\zeta$ denote the unique root of unity of order prime to $p$ with $|m-\xi|_p<1$ and 
$\Delta_K(t)\equiv \xi (t-\zeta)(t-\zeta^{-1})$ mod $p$. 
Again by \Cref{cor.res} (2), we have   
$\lim{\rm Res}(t^{p^n}-1,\Delta_K(t))=\xi (1-\zeta)(1-\zeta^{-1})$. 
If this value is in $\Z$, then we have $\xi=\pm 1$ and $\zeta^6=1$, 
so $\Delta_K(t)/m \equiv (t-1)^2$, $(t+1)^2$, $t^2+t+1$, $t^2+1$, or $t^2-t+1$. 
Note that $\xi \equiv \pm1$ is equivalent to that $p\mid m^2-1$. 

Since ${\rm Res}(\Delta_K(t),(t+1)^2)=(4m-1)^2$, 
${\rm Res}(\Delta_K(t),t^2+t+1)=(3m-1)^2$, 
${\rm Res}(\Delta_K(t),t^2+1)=(2m-1)^2$, and
${\rm Res}(\Delta_K(t),t^2-t+1)=(m-1)^2$, 
we have $\Delta_K(t)\equiv m(t+1)^2$ if $p\mid 4m-1$, $\Delta_K(t)\equiv m(t^2+t+1)$ if $p\mid 3m-1$, $\Delta_K(t) \equiv m(t^2+1)$ if $p\mid 2m-1$, and $\Delta_K(t)\equiv m(t^2-t+1)$ if $p\mid m-1$, 
while $\Delta_K(t)\equiv m(t-1)^2$ is not the case. 

Suppose $p\mid m-1$, so that $m\equiv 1$. If $\Delta_K(t)\equiv (t+1)^2$, then by $0\equiv (4m-1)^2\equiv 3^2$, we have $p=3$. 
If $\Delta_K(t)\equiv t^2+t+1$, then by $0\equiv (3m-1)^2\equiv 2^2$, we have $p=2$. 
If  $\Delta_K(t)\equiv t^2+1$, then $0\equiv (2m-1)^2\equiv 1$, which is not the case. 
If $\Delta_K(t)\equiv t^2-t+1$, then by $0\equiv (m-1)^2$, we just have $p\mid m-1$. 
(In this case, we need $p\neq 2, 3$.) 

Suppose instead $p\mid m+1$, so that $m\equiv -1$. 
If $-\Delta_K(t)\equiv (t+1)^2$, then by $0\equiv (4m-1)^2\equiv 5^2$, we have $p=5$. 
If $-\Delta_K(t)\equiv t^2+t+1$, then by $0\equiv (3m-1)^2\equiv 2^4$, we have $p=2$. 
If $-\Delta_K(t)\equiv t^2+1$, then by $0\equiv (2m-1)^2\equiv 3^2$, we have $p=3$. 
If $-\Delta_K(t)\equiv t^2-t+1$, then 
by $0\equiv (m-1)^2\equiv 2^2$, we have $p=2$. 
(In this case, by $2\mid 6$, \Cref{cor.res} does not apply, while $-\Delta_K(t)\equiv t^2+t+1$ also holds.) 

If $p=2$, then $\Delta_K(t)\equiv t^2-t+1$. \Cref{lem.sgn} yields 
$|H_1(X_{2^n})_{\rm tor}|={\rm sgn}(\Delta_K(-1)){\rm Res}(t^{2^n}-1,$ $\Delta_K(t))={\rm sgn}(4m-1){\rm Res}(t^{2^n}-1,\Delta_K(t))$. 
Hence by \Cref{cor.res}, we have 
$\lim={\rm sgn}(4m-1)\cdot 3$. 
Similarly, 
if $p=3$ and $3\mid m-1$, then $\lim= (t+1)^2|_{t=1}=2^2=4$. 
If $p=3$ and $3\mid m+1$, then $\lim= -(t^2+1)|_{t=1}=-2$. 
If $p=5$ and $5\mid m+1$, then $\lim= -(t+1)^2|_{t=1}=-4$. 
If $p\neq 2,3$ and $p\mid m-1$, then $\lim= t^2-t+1|_{t=1}=1$. 
Combining these above, we obtain the assertion and complete the table. 
\end{proof} 

By Propositions \ref{prop.twistknots} and \ref{prop.Atiyah}, we may conclude the following. 

\begin{cor} \label{cor.twistknots} 
We have $\lim_{n\to \infty}|H_1(X_{p^n})_{\rm tor}|=1$ if and only if one of the following holds. 
\begin{itemize}
\item $p=2$, $m$ is even, and $4m-1<0$.
\item $p\neq 2$ and $p|m$. 
\item $p\neq 2,3$ and $p|(m-1)$. 
\end{itemize}
Particularly, for each $m\neq 0$, we have $\lim_{n\to \infty}|H_1(X_{p^n})_{\rm tor}|=1$ for only finitely many $p$'s. 
\end{cor} 

We discuss further problems about the cases with $\lim=1$ in Subsection \ref{ss.rem}. 

\subsubsection{Can $\nu$ be large with $r_e$ being small?} \label{sss.knot.nu} 
We next investigate the Iwasawa $\nu$-invariants of $\Zp$-covers $(X_{ep^n}\to X_e)_n$ of $K=J(2,2m)$ with 
$e\in \Z_{>0}$, whilst essential cases would be those with $p\nmid e$. 
Put $r_n={\rm Res}(t^{n}-1,\Delta_K(t))$. 
We have $\nu>0$ if $\lim_{n\to \infty} |H_1(X_{ep^n})_{\rm tor}|=0$ in $\Zp$. 
More precisely, we have the following. 
Note that $r_{ep^n}=0$ for some $n$ is equivalent to $m=1$ and $6\mid ep$.

\begin{prop} \label{prop.nu>0} 
For the $\Zp$-cover $(X_{ep^n}\to X_e)_n$ of $K=J(2,2m)$ and $r_{ep^n}\neq 0$, 
the following conditions are equivalent. 
\begin{itemize}
\item $\lim |H_1(X_{ep^n})_{\rm tor}|=0$ in $\Zp$. 
\item $|H_1(X_{e})_{\rm tor}|\equiv 0$ mod $p$.
\item $(X_{ep^n}\to X_e)_n$ has $\nu>0$. 
\end{itemize}
Furthermore, except for the following special cases, we have $p^{-\nu}=|H_1(X_{e})_{\rm tor}|_p$.  
\begin{itemize}
\item $p=3$, $2\mid e$, $|r_{2}|_3=|r_e|_3=1/3$. 
\item $p=2$, $|r_e|_2=1/4$.  
\end{itemize} 
\end{prop} 

\begin{proof} 
Write $\Delta_K(t)=m(t-\alpha)(t-\beta)$ and put $\Delta_e(t)=m^e(t-\alpha^e)(t-\beta^e)$, so that 
we have $r_e=m^e(1-\alpha^e)(1-\beta^e)=m^e(2-(\alpha^e+\beta^e))$, 
$\Delta_e(t)=m^e t^2+(r_e-2m^e)t+m^e$, 
and $\Delta_e(t+1)=m^e(t-(\alpha^e-1))(t-(\beta^e-1))=m^et^2+r_e t+r_e$.  

Suppose that $p\mid r_e$. Then we have $|\alpha^e-1|_p=|\beta^e-1|_p=|r_e|_p^{1/2}$. 
If $p>3$, then $|r_e|_p^{1/2}\leq p^{-1/2}< p^{-1/(p-1)}$, and hence $|\log \alpha^e|_p=|\log \beta^e|_p=|r_e|_p^{1/2}$ and $p^{-\nu}=|H_1(X_{e})_{\rm tor}|_p$. 
If instead $p=3$ and $3^2\mid r_e$, then by $|r_e|_p^{1/2}\leq p^{-1}< p^{-1/(p-1)}$, we obtain a similar result. 
If instead $p=2$ and $2^4 \mid r_e$, then by $|r_e|_p^{1/2}\leq 2^{-2}< 2^{-1}=p^{-1/(p-1)}$, we obtain a similar result. 
Thus, we obtain the assertion. 
\end{proof} 

\begin{rem} 
By a $p$-adic analogue of Lindemann's theorem (cf.~\cite{Mahler1933Crelle, Nesterenko2008IzvMath}) 
asserting that $\alpha\in\ol{\Q}_p$ with $0<|\alpha|_p<1$ implies $\log\alpha\not\in \ol{\Q}$, 
one might expect that the conditions in \Cref{prop.nu>0} (resp. \Cref{prop.nu>0.elliptic}) 
are equivalent to 
$\lim |H_1(X_{ep^n})_{\text{non-}p}| \not\in \ol{\Q}$ (resp. $\lim |{\rm Cl}^0(k_{ep^n})_{\text{non-}p}| \not\in \ol{\Q}$). 
\end{rem}

\begin{eg} 
Let $K=J(2,2)=3_1$ with $\Delta_K(t)=t^2-t+1$. For $e\leq 10$, we have 

\begin{center} 
\begin{tabular}{c||c|c|c|c|c|c|c|c|c|c}
$e$&1&2&3&4&5&6&7&8&9&10\\ \hline 
${\rm Res}(t^e-1,\Delta_K(t))$&1&3&$2^2$&$3$&1&0&1&3&$2^2$&3
\end{tabular}, 
\end{center} 
\begin{itemize}
\item $\nu=1$ for $(e,p)=(2,3),(4,3),(6,3),(8,3)$, 
\item $\nu=2$ for $(e,p)=(3,2), (9,2)$. 
\end{itemize} 

Let $K=J(2,-2)=4_1$ with $\Delta_K(t)=-t^2+3t-1$. For $e\leq 10$, we have 

\begin{center} 
\begin{tabular}{c||c|c|c|c|c|c|c|c|c|c}
$e$&1&2&3&4&5&6&7&8&9&10\\ \hline 
${\rm Res}(t^e-1,\Delta_K(t))$&1&$-5$&$2^4$&$-3^2 5$&$11^2$&$-2^8 5$&$29^2$&$-3^2 5^1 7^2$&$2^4 19^2$&$-5^3 11^2$
\end{tabular}, 
\end{center} 
\begin{itemize}
\item $\nu=1$ for $(e,p)=(2,5),(4,5),(6,5),(8,5)$, 
\item $\nu=2$ for $(e,p)=(4,3), (5,11), (7,29), (8,3),(8,7),(9,19), (10,11)$, 
\item $\nu=4$ for $(e,p)=(3,2), (9,2)$. 
\end{itemize} 

Let $K=J(2,4)=5_2$ with $\Delta_K(t)=2t^2-3t+2$. For $e\leq 10$, we have

\begin{center} 
\begin{tabular}{c||c|c|c|c|c|c|c|c|c|c}
$e$&1&2&3&4&5&6&7&8&9&10\\ \hline 
${\rm Res}(t^e-1,\Delta_K(t))$&1&7&$5^2$&$3^2 7$&$11^2$&$5^27$&$13^2$&$3^2 7$&$5^2$&$7^1 11^2$
\end{tabular}, 
\end{center} 
\begin{itemize}
\item $\nu=1$ for $(e,p)=(2,7),(4,7),(6,7),(8,7)$, 
\item $\nu=2$ for $(e,p)=(3,5), (4,3), (5,11), (6,5), (7,13), (8,3), (9,5), (10,11)$. 
\end{itemize} 
\end{eg}

\begin{eg}[Large base $p$-class number] \label{eg.largenu} 
For $K=J(2,2m)$, we have $r_2=4m-1$, $r_3=9m^2-6m+1=(3m-1)^2$, and $r_4=(2m-1)^2(4m-1)$. 
Under the assumption of \Cref{prop.nu>0}, 
if $p\mid 4m-1$ with exponent 1, then we have $\nu=1$ for even $e$. 
If instead $p\nmid 4m-1$ and $p\mid r_e$, then $\nu$ is even. 

For any $p>3$ and $a\in \Z_{\geq 0}$, if we put $m=(p^a+1)/2$, then we have $p^a=2m-1$ and $|r_4|_p=p^{-2a}$. 
Hence for $e=4$, we have $\nu=2a$. 

For any $p$ with $p\equiv 3$ mod 4 and $a\in \Z_{\geq 0}$, if we put $m=(3^{2a+1}+1)/4$, then we have $r_2=4m-1=p^{2a+1}$. Hence for $e=2$, we have $\nu\geq {2a+1}$. 

For $p=2$ and $a\in \Z_{\geq 0}$, if we put $m=(2^{2c+1}+1)/3$, then we have $r_3=(3m-1)^2=2^{2+2b}=2^{2+4c}$. Hence for $e=3$, we have $\nu\geq 2+4c$. 
\end{eg}

\begin{eg}[Small base $p$-class number and large $\nu$]
 \label{eg.except} 
Let us examine the two exceptional cases in \Cref{prop.nu>0}. 
In these cases, $|r_e|_p=1/p$ holds and $\nu$ becomes arbitrarily large. 

(1) Suppose $p=3$ and $|r_2|_3=1/3$. 
Then, since $r_2=4m-1$, we have $m=3a+1$ with $a=3b$ or $3b+1$, and hence $m=9b+1$ or  $9b+4$, $b\in \Z$. 
(i) If $m=9b+1$, then $r_6=3^5b^2(12b+1)(27b+2)^2$, $|r_6|_3=|3^5 b^2|_3$. 
By \Cref{prop.nu>0} and $\lambda=2$, we have $\nu=v_3(r_6)-2=5+2v_3(b)-2=3+2v_3(b)$.
For instance, if we put $b=3^c$ with $c\in \Z$, then we obtain $m=3^{c+2}+1$ and $\nu=3+2c$ for $(p,e)=(3,2)$. 
More concretely, if we put $c=8$, then we see that the $\Z_3$-cover $(X_{2^1 3^n}\to X_3)_n$ of $K=J(2,2(3^{10}+1))=J(2,118100)$ with $e=2$ has $\nu=19$. 
(ii) If instead $m=9b+4$, then by $|r_{6}|_3=1/3^3$, $(X_{2^1 3^n}\to X_3)_n$ has $\nu=1$. 

(2) Suppose $p=2$ and $|r_3|_2=1/4$. 
Then by $r_3=(3m-1)^2$, we have $3m-1=2(2a+1)$, 
$a=3b$, $m=4b+1$, $b\in \Z$. 
Since $r_6=2^6 b^2 (6b+1)^2(16b+3)$ and $\lambda=2$, 
\Cref{prop.nu>0} yields that 
$\nu=v_2(r_6)-2=6+2 v_2(b)-2=4+2 v_2(b)$. 
For instance, if we put $b=2^c$ with $c\in \Z$, 
then we have $m=2^{c+2} +1$ and $\nu=4+2c$. 
More concretely, if we put $c=48$, then the $\Z_2$-cover $(X_{2^n 3^1}\to X_3)_n$ of $K=J(2,2(2^{50}+1))$ with $e=3$ has $\nu=100$. 
\end{eg}

By Examples \ref{eg.largenu} and \ref{eg.except}, we may conclude the following. 
\begin{prop} \label{prop.nu.knot}
For any $p$ and arbitrary large $N>0$, we may find $K=J(2,2m)$ and $p\nmid e$ such that the $\Zp$-cover $(X_{ep^n}\to X_e)_n$ has $\nu>N$. 
Furthermore, for $(p,e)=(2,3),(3,2)$, we may find such $\Zp$-covers with the base $p$-class number $|H_1(X_e)_{(p)}|=4,3$ respectively. 
\end{prop} 

For a general knot $K$ and a small $p$, we may have  
a slightly large $\nu$, namely, $p^{-\nu}<|H_1(X_e)_{\rm tor}|_p$ holds. 
Nevertheless, such $p$ is bounded by the degree of $\Delta_K(t)$, and hence 
a fixed $K$ has bounded $\nu$'s when $(p,e)$ moves. 

\begin{rem} \label{rem.nu-links} 
For any $\Zp$-cover $(X_{ep^n}\to X_e)_n$ of a knot $K$ in $S^3$, we always have $\nu \geq 0$. 
This is not the case for a general link in $S^3$. Indeed, 
let $L_m=K_1\cup K_2$ be the $m$-twisted Whitehead link in $S^3$ with the Alexander polynomial $\Delta(t_1,t_2)=m(1-t_1)(1-t_2)$ and let $X_n\to X=S^3-L_m$ denote the ``total linking number'' $\Z/n\Z$-cover, that is, the cover corresponding to the kernel of the surjective homomorphism $\pi_1(X)\surj \Z/n\Z$ sending all meridians of $L_m$ to 1. 
If we put $m=p^\mu$ with $\mu\in \Z_{\geq 0}$, 
then a formula of Mayberry--Murasugi \cite{MayberryMurasugi1982} or Porti \cite{Porti2004} yields that 
$|H_1(X_{p^n})_{\rm tor}|=p^n\prod_{\zeta;\,\zeta^{p^n}=1,\, \zeta\neq 1}\Delta(\zeta,\zeta)=p^{3n+\mu p^n-\mu}$ and hence $\nu=-\mu$ may take any non-positive integer if $\mu$ moves. 
The cases of links with multivariable Alexander polynomials will be extensively discussed in \cite{TatenoUeki-Zpd}.  
\end{rem}

\subsection{Livingston's results} 
\label{ss.rem}
In the knot theory side, we may choose the extension degree $p$ and the branch locus $K$ independently, 
so we may consider several analogues of Weber's class number problem. 
Livingston's result in the following considers the set of all prime numbers. 
We may also verify this assertion by using \Cref{Lem.Apostol}. 

\begin{prop}[{Livingston \cite[Theorem 1.2]{Livingston2002GT}}] 
Let $K$ be a knot in $S^3$. Then, the equality 
$|H_1(X_{p^n})_{\rm tor}|=1$ holds for every prime number $p$ and positive integer $n$ 
if and only if 
every non-trivial factor of the Alexander polynomial $\Delta_K(t)$ is 
the $m$-th cyclotomic polynomial with $m$ being divisible by at least three distinct prime numbers. 
\end{prop}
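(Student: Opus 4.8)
The plan is to convert everything into cyclic resultants and then run a case analysis governed by Apostol's lemma. By Fox--Weber's formula (Proposition \ref{prop.FoxWeber}) and the torsion variant noted just after it, for every $p,n$ one has $|H_1(X_{p^n})_{\rm tor}|=|{\rm Res}((t^{p^n}-1)/g_n,\Delta_K)|=\prod_{\zeta}|\Delta_K(\zeta)|$, where $g_n=\gcd(t^{p^n}-1,\Delta_K)$ and $\zeta$ runs over the $p^n$-th roots of unity with $\Delta_K(\zeta)\neq0$. Since $\Delta_K(1)=1$, the factor $\Phi_1=t-1$ never divides $\Delta_K$, so every cyclotomic factor $\Phi_m$ of $\Delta_K$ has index $m\geq2$. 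Throughout I write $t^{p^n}-1=\prod_{0\leq i\leq n}\Phi_{p^i}$ and use multiplicativity of the resultant together with Apostol's Lemma \ref{Lem.Apostol}, its boundary value ${\rm Res}(\Phi_1,\Phi_m)=\Phi_m(1)$, and the classical evaluation $\Phi_m(1)=q$ when $m$ is a power of a prime $q$ and $\Phi_m(1)=1$ when $m$ has at least two distinct prime divisors.

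For the ``if'' direction, assume every nontrivial factor of $\Delta_K$ is $\Phi_m$ with $m$ divisible by at least three distinct primes. Then no such $m$ is a prime power, so $\Phi_m\nmid t^{p^n}-1$ and $g_n=1$; hence $|H_1(X_{p^n})_{\rm tor}|=|{\rm Res}(t^{p^n}-1,\Delta_K)|$. By multiplicativity it suffices to see that ${\rm Res}(\Phi_{p^i},\Phi_m)=\pm1$ for each $i\leq n$: for $i=0$ this is $\Phi_m(1)=1$ (as $m$ has $\geq2$ prime divisors), while for $i\geq1$ the ratio $m/p^i$, when an integer, still carries the $\geq2$ primes of $m$ other than $p$ and so is not a prime power, whence Lemma \ref{Lem.Apostol} gives $\pm1$. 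Thus the product is $\pm1$ and $|H_1(X_{p^n})_{\rm tor}|=1$.

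For the ``only if'' direction I argue by contraposition, splitting the failure of the hypothesis into three exhaustive cases. (a) If $\Delta_K$ has a non-cyclotomic irreducible factor $\psi$ (necessarily $\psi\neq t$ since $\Delta_K(0)\neq0$), then its Mahler measure satisfies $M(\psi)>1$ by Kronecker's theorem, and the classical limit $\tfrac1{p^n}\log|{\rm Res}(t^{p^n}-1,\psi)|\to\log M(\psi)$ forces $|{\rm Res}(t^{p^n}-1,\psi)|\to\infty$; since passing to the torsion only deletes the boundedly many (at most $\deg\Delta_K$) roots of unity killed by $g_n$, the $\psi$-contribution to $|H_1(X_{p^n})_{\rm tor}|$ still diverges, so some value exceeds $1$. (b) If $\Delta_K$ is a product of cyclotomics but has a factor $\Phi_m$ with $m$ a prime power, choose any prime $p\nmid\prod_j m_j$; then $g_n=1$ and, as in the ``if'' computation, every ${\rm Res}(\Phi_{p^i},\Phi_{m_j})$ with $i\geq1$ equals $\pm1$, so $|H_1(X_{p^n})_{\rm tor}|=\prod_j|\Phi_{m_j}(1)|^{e_j}=\prod_{m_j\ \text{a prime power}}q_j^{e_j}>1$. (c) If $\Delta_K$ is a product of cyclotomics with no prime-power index but some $\Phi_m$, $m=q_1^{a}q_2^{b}$, having exactly two prime divisors, choose $p=q_1$ and $n\geq a$; the absence of prime-power indices means no $\Phi_{q_1^i}$ divides $\Delta_K$, so again $g_n=1$, and Lemma \ref{Lem.Apostol} shows that the only nontrivial factor of ${\rm Res}(t^{q_1^n}-1,\Phi_m)=\prod_i{\rm Res}(\Phi_{q_1^i},\Phi_m)$ is the $i=a$ term $q_2^{\varphi(q_1^{a})}$, whence $|H_1(X_{q_1^n})_{\rm tor}|\geq q_2^{\varphi(q_1^{a})}>1$.

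The main obstacle is the ``only if'' direction, where two points need care. First, reducing to the cyclotomic case in (a) mixes archimedean input (the Mahler-measure growth and Kronecker's theorem) with the torsion-versus-total-homology discrepancy; the resolution is that $g_n$ has degree bounded by $\deg\Delta_K$, so only finitely many Euler factors are ever removed and the exponential growth survives (the one genuinely delicate sub-point is controlling roots of $\psi$ lying exactly on the unit circle, which is absorbed into the standard Mahler-measure limit). Second, in (c) one must rule out cancellation among the cyclotomic factors; the device that makes this clean is to choose $p$ and $n$ so that $g_n=1$, i.e.\ so that no factor of $\Delta_K$ vanishes on $p^n$-th roots of unity, after which $|H_1(X_{p^n})_{\rm tor}|$ is literally a product of integers each $\geq1$ and the single exhibited factor $>1$ finishes the argument.
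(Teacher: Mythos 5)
The paper gives no proof of this proposition at all---it is quoted verbatim from Livingston \cite[Theorem 1.2]{Livingston2002GT}---so there is nothing internal to compare against; judged on its own, your argument is essentially correct and in fact reproduces the outline of Livingston's published proof (Fox's formula plus resultants of cyclotomic polynomials for the cyclotomic part, Mahler-measure growth for a non-cyclotomic factor). Three points, though. First, you can delete the entire $g_n$ apparatus: since $\Delta_K(1)=\pm1$ while $\Phi_{q^k}(1)=q$ for any prime power $q^k$, no cyclotomic polynomial of prime-power index (nor $\Phi_1$) can ever divide a knot's Alexander polynomial, so $\Delta_K$ never vanishes at $p^n$-th roots of unity, $g_n=1$ identically, and plain Proposition \ref{prop.FoxWeber} applies in every case. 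This matters because the ``torsion variant'' $|H_1(X_{p^n})_{\rm tor}|=|{\rm Res}((t^{p^n}-1)/g_n,\Delta_K)|$ that you lean on in case (a) is not established anywhere in the paper (the remark after Proposition \ref{prop.FoxWeber} modifies Theorems \ref{thm.res.conv} and \ref{thm.res}, i.e.\ statements about resultants, not a homological identity), and such torsion formulas are genuinely delicate when the Betti numbers of the covers jump; luckily, for prime-power covers of knots that situation never arises. The same observation makes your case (b) vacuous for knots---harmless, but worth noticing. Second, Lemma \ref{Lem.Apostol} as stated assumes $m>n>1$ and $(m,n)>1$, so the instances you need with $(p^i,m_j)=1$, or with $p^i\nmid m_j$, are not literally covered by it; you must also invoke the classical complement that ${\rm Res}(\Phi_m,\Phi_n)=\pm1$ whenever $m,n>1$ and neither ratio is an integral prime power (true and standard, but it should be cited, since you attribute more to the lemma than it says). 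Third, in case (a) the limit $\frac{1}{p^n}\log|{\rm Res}(t^{p^n}-1,\psi)|\to\log M(\psi)$ in the presence of roots of $\psi$ on the unit circle that are not roots of unity is not elementary: it rests on Gel'fond/Baker-type linear-forms-in-logarithms bounds, which is exactly the Gonz\'alez-Acu\~na--Short/Riley input that Livingston himself cites; calling it ``the standard Mahler-measure limit'' buries a serious theorem. (Also note Kronecker's theorem requires $\psi$ monic; for non-monic $\psi$ one gets $M(\psi)\geq 2$ from the leading coefficient, as you implicitly use.) With these repairs---all local---your proof is complete, and it is, in structure, Livingston's.
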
 

\begin{eg} Note that a knot with any prescribed Alexander polynomial may be constructed by Rolfsen's method in \cite{Rolfsen1976}. Namely, if we have $\Delta(t)\in \Z[t]$ with $\Delta(1)=1$ and $\Delta(t)\,\dot{=}\,\Delta(1/t)$, then we have a knot $K$ with $\Delta_K(t)=\Delta(t)$. 

There are many knots with $\Delta_K(t)=1$. 
A knot $K$ with $\Delta_K(t)=\Phi_{30}(t)=t^8 + t^7 - t^5 - t^4 - t^3 + t + 1$ would be the initial example with $\Delta_K(t)\neq 1$ satisfying $|H_1(X_{p^n})_{\rm tor}|=1$ for all $p$ and $n$. 
A systematic study of such knots would be of further interest. 

If $p$ is a fixed prime number, then a knot $K$ with $\Delta_K(t)=\Phi_{30}(t)+p(t^6-t^5-t^3+t^2)$ satisfies $\lim_{n\to \infty}|H_1(X_{p^n})_{\rm tor}|=1$ and $(|H_1(X_{p^n})|)_n\neq (1)$. 
We wonder if there exists a knot with $\lim_{n\to \infty}|H_1(X_{p^n})_{\rm tor}|=1$ and $(|H_1(X_{p^n})_{\rm tor}|)_n\neq (1)$ for infinitely many $p$'s. 
\end{eg} 

A subtle question from a viewpoint of the Sato--Tate conjecture in number theory (cf.~Subsubsection \ref{sss.EC.liminZ}) 
is to ask \emph{whether there exist a knot $K$ such that the set $P(K)$ of $p$'s with $\lim_{n\to \infty}|H_1(X_{p^n})_{\rm tor}|=1$ is an infinite set but the density of $P(K)$ in the set of all prime numbers is zero.}  
As we have seen in \Cref{prop.torusknots} and \Cref{cor.twistknots}, torus knots $T_{a,b}$ and twists knots $J(2,2m)\neq 0_1$ are two extreme cases on the opposite sides; they satisfy $\lim_{n\to \infty}|H_1(X_{p^n})_{\rm tor}|=1$ and $\neq 1$ for almost all $p$, respectively.  
We wonder if there is a class with an intermediate behavior. 

Another approach to formulating an analogue of the Sato--Tate conjecture is to consider an infinite family $(K_i)_{\in \Z_{>0}}$ of disjoint knots in $S^3$ satisfying some equidistribution theorem, such as the Chebotarev law (cf.~\cite{Mazur2012, McMullen2013CM, Ueki7, Ueki9}). 
It would be interesting to study the density of knots with the $p$-adic torsion being 1 for each fixed $p$. 
A possible clue is Dehornoy's study on the Lorenz knots ($\fallingdotseq$ modular knots) in \cite{Dehornoy2015Fourier}, which is an analogue of the Riemann/Weil conjecture in a view of Noguchi \cite{Noguchi2005}. 

For a branched $\Zp$-cover $(M_{p^n}\to M_1)_n$, to ask whether all $M_{p^n}$'s are $\Q$HS$^3$'s, or equivalently, to whether every $H_1(M_{p^n})$ is a finite group, is also a weak analogue of Weber's problem. 
By $\Delta_K(1)=1$, we have the following. 
\begin{prop}[{\cite[Corollary 3.2]{Livingston2002GT}}]
The branched $\Z/p^n\Z$-cover of a knot in $S^3$ is always a $\Q$HS$\,^3$. 
\end{prop}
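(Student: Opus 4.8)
The plan is to reduce the statement to a non-vanishing property of the Alexander polynomial and then apply Fox--Weber's formula (Proposition \ref{prop.FoxWeber}). By definition, the branched $\Z/p^n\Z$-cover $M_{p^n}$ is a $\Q$HS$^3$ exactly when $H_1(M_{p^n})$ is a finite group. First I would note that, provided $\Delta_K(t)$ does not vanish at any $p^n$-th root of unity, Proposition \ref{prop.FoxWeber} gives $|H_1(M_{p^n})|=|{\rm Res}(t^{p^n}-1,\Delta_K(t))|$, a finite integer; so it suffices to establish that $\Delta_K$ has no $p^n$-th root of unity among its roots.

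The arithmetic heart of the argument is the normalization $\Delta_K(1)=1$ (in general $\Delta_K(1)=\pm 1$), combined with the factorization $t^{p^n}-1=\prod_{0\le i\le n}\Phi_{p^i}(t)$. A $p^n$-th root of unity $\zeta$ is a primitive $p^i$-th root for some $0\le i\le n$, so its minimal polynomial over $\Q$ is the irreducible cyclotomic polynomial $\Phi_{p^i}(t)$. Thus $\Delta_K(\zeta)=0$ would force $\Phi_{p^i}(t)\mid \Delta_K(t)$ in $\Z[t]$. Evaluating at $t=1$ and using $\Phi_1(1)=0$ together with $\Phi_{p^i}(1)=p$ for every $i\ge 1$, I would conclude that either $\Delta_K(1)=0$ (case $i=0$) or $p\mid \Delta_K(1)$ (case $i\ge 1$), each contradicting $\Delta_K(1)=\pm 1$. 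Hence no such $\zeta$ exists, and the desired non-vanishing, and with it the $\Q$HS$^3$ property of every $M_{p^n}$, follows.

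There is no essential obstacle in this argument: everything hinges on the elementary identity $\Phi_{p^i}(1)=p$ for $i\ge 1$, which prevents any prime-power cyclotomic polynomial from dividing a polynomial whose value at $1$ is a unit. The point worth emphasizing is the contrast with composite branching degrees, where a factor $\Phi_m(t)$ with $m$ divisible by at least three distinct primes has $\Phi_m(1)=1$ and can therefore divide $\Delta_K(t)$; in that regime finiteness may genuinely fail, which is exactly the content of Livingston's criterion recorded just above. The prime-power hypothesis is precisely what rules this out.
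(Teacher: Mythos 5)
Your proof is correct and follows essentially the same route as the source: the paper itself gives no argument, citing Livingston, and the mechanism behind that citation is exactly your reduction via Fox--Weber's formula plus the observation that $\Phi_1(1)=0$ and $\Phi_{p^i}(1)=p$ for $i\geq 1$ forbid any $\Phi_{p^i}(t)$ from dividing $\Delta_K(t)$ when $\Delta_K(1)=\pm 1$. Your closing contrast with composite branching degrees, where $\Phi_m(1)=1$ for $m$ with at least two (indeed three, for knot-realizability) distinct prime factors, correctly pinpoints why the prime-power hypothesis is what makes the statement true, in line with Livingston's criterion quoted just above it.
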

This proposition means that Kionke's $p$-adic Betti number is zero. 
It would be also interesting to study $p$-adic refinements of \cite[Theorem 1.1]{Livingston2002GT} and \cite{KimSG2009JKTR} on concordance. 

\section{Algebraic curves} \label{s.ac}
Theorems on the $p$-adic limit of cyclic resultants are applicable to algebraic curves (function fields) as well. 
We examine the cases of elliptic curves as examples and point out conditions for the $p$-adic limit value being zero and one. 

\subsection{A formula for function fields} 
Let us recollect some properties of function fields to obtain an analogue of Fox--Weber's formula. Basic references are due to Rosen \cite{Rosen2002GTM} and 
Stichtenoth \cite{Stichtenoth2009GTM}. 

Let $k$ be a finite extension of $\F_l(x)$, $l$ being a prime number. (We use $l$ instead of $p'$.) 
Let $\F(k)$ denote the constant field of $k$ and put $q=l^e=|\F(k)|$. 
Let  $\mca{D}_k$, $\mca{P}_k$, and $\mca{E}_k$ denote the set of divisors, that of principal divisors, 
and that of effective divisors of $k$ respectively. 
Put $\mca{D}_k^n=\{A\in \mca{D}_k\mid {\rm deg}A=n\}$ and $\mca{E}^n_k=\{A\in \mca{E}_k\mid {\rm deg}A=n\}$ for each $n\in \Z_{\geq 0}$. 
Let $g(k)$ denote the genus of $k$. 
The congruent zeta function of $k$ is defined by 
\[\zeta_k(s)=\sum_{A\in \mca{E}_k} 
\dfrac{1}{(q^{{\rm deg}A})^s}\]
and satisfies 
\[\zeta_k(s)=\sum_{n=0}^\infty \dfrac{|\mca{E}^n_k|}{(q^n)^s}=\prod_{P\in \mca{P}_k}\left(1-\dfrac{1}{(q^{{\rm deg}P})^s}\right)^{-1}.\]
This Dirichlet series is known to absolutely converge to a holomorphic function on ${\rm Re}(s)>1$. Moreover, we have the following. 
\begin{prop}[{Hasse--Weil, cf.~\cite{Weil1948PIMUS}, \cite[Theorem 5.9]{Rosen2002GTM}}] There exists $L_k(t)\in \Z[t]$ of degree $2g(k)$ satisfying  
\[\zeta_k(s)=\dfrac{L_k(q^{-s})}{(1-q^{-s})(1-q^{1-s})}\]
on ${\rm Re}(s)>1$. 
\end{prop}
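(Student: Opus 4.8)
The plan is to pass to the variable $u = q^{-s}$ and study the power series $Z_k(u) = \sum_{n \geq 0} |\mca{E}_k^n|\, u^n$, which equals $\zeta_k(s)$; proving the proposition amounts to showing $Z_k(u) = L_k(u)/((1-u)(1-qu))$ with $L_k \in \Z[u]$ of degree $2g := 2g(k)$. The engine is the Riemann--Roch theorem together with F.~K.~Schmidt's theorem that a function field over a finite field carries a divisor of degree $1$.

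First I would reduce the counting of effective divisors to the counting of divisor classes. For a divisor $A$ the effective divisors linearly equivalent to $A$ are parametrized by the projective space of the Riemann--Roch space $\mca{L}(A)$, so there are exactly $(q^{\ell(A)} - 1)/(q-1)$ of them, where $\ell(A) = \dim_{\F_q} \mca{L}(A)$. Summing over the (finitely many) divisor classes of degree $n$ gives
\[|\mca{E}_k^n| = \sum_{[A];\, \deg A = n} \frac{q^{\ell(A)} - 1}{q - 1}.\]
By F.~K.~Schmidt's theorem there is a divisor of degree $1$, so every degree $n \in \Z$ supports exactly $h$ divisor classes, $h := |\mca{D}_k^0/\mca{P}_k|$ being the class number.

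Next I would invoke Riemann--Roch, $\ell(A) - \ell(W - A) = \deg A - g + 1$ for a canonical divisor $W$ with $\deg W = 2g - 2$. For $\deg A = n > 2g - 2$ one has $\deg(W - A) < 0$, hence $\ell(W - A) = 0$ and $\ell(A) = n - g + 1$; thus $|\mca{E}_k^n| = h (q^{n-g+1} - 1)/(q - 1)$ for all $n \geq 2g - 1$. Splitting
\[Z_k(u) = \sum_{n=0}^{2g-2} |\mca{E}_k^n|\, u^n + \frac{h}{q-1} \sum_{n \geq 2g-1} (q^{n-g+1} - 1)\, u^n,\]
the tail is a combination of two geometric series summing to $\frac{h}{q-1}\bigl( q^{1-g}\frac{(qu)^{2g-1}}{1 - qu} - \frac{u^{2g-1}}{1-u}\bigr)$; combining with the polynomial head over the common denominator $(1-u)(1-qu)$ exhibits $Z_k(u) = L_k(u)/((1-u)(1-qu))$ as a rational function. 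Since $(1-u)(1-qu) = 1 - (1+q)u + qu^2$ has integer coefficients and every $|\mca{E}_k^n|$ is a nonnegative integer, the numerator $L_k(u) = Z_k(u)(1-u)(1-qu)$ is a power series with integer coefficients that we have just shown to terminate, so $L_k \in \Z[u]$.

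The last and subtlest point is the degree claim $\deg L_k = 2g$. I would derive it from the functional equation, which is the global shadow of the Riemann--Roch duality $A \leftrightarrow W - A$: splitting $Z_k$ into the contribution of classes with $0 \leq \deg A \leq 2g - 2$ and the explicitly summed tails, the involution $A \mapsto W - A$ together with Riemann--Roch yields $Z_k(1/(qu)) = q^{1-g} u^{2-2g} Z_k(u)$, equivalently $L_k(u) = q^g u^{2g} L_k(1/(qu))$. Evaluating at $u=0$ gives $L_k(0) = |\mca{E}_k^0| = 1$; comparing top-degree terms on the two sides of this palindromic identity then shows the right-hand side has degree $2g$ with leading coefficient $q^g L_k(0) = q^g \neq 0$, forcing $\deg L_k = 2g$ (and one reads off $L_k(1) = h$). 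The main obstacle is precisely this functional-equation step: it requires handling the duality $\ell(A) = \ell(W-A) + \deg A - g + 1$ uniformly across the finitely many ``middle'' degrees $0 \le n \le 2g-2$, rather than only in the large-degree regime where $\ell(W-A)$ vanishes, so one must track the otherwise-opaque terms $\ell(W-A)$ and use that $A \mapsto W - A$ matches the degree-$n$ classes with the degree-$(2g-2-n)$ classes.
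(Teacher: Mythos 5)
Your proof is correct, and it is essentially the argument behind the paper's citation: the paper states this proposition without proof, referring to \cite[Theorem 5.9]{Rosen2002GTM}, whose proof is exactly your route --- counting effective divisors in a class by $(q^{\ell(A)}-1)/(q-1)$, using F.~K.~Schmidt's degree-one theorem to get $h$ classes per degree, applying Riemann--Roch to sum the tail $n \geq 2g-1$ explicitly, and pinning down $\deg L_k = 2g$ via the functional equation $L_k(u) = q^g u^{2g} L_k(1/(qu))$. All steps check out (one could alternatively get the leading coefficient $q^g$ by directly computing the $u^{2g}$-coefficient using $\ell(A)$ in degree $2g-2$, avoiding the functional equation), so nothing further is needed.
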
 
This $L_k(t)$ is called \emph{the $L$-polynomial of $k$}. 
The right-hand side is an analytic continuation of $\zeta_k(s)$ to $\C$ as a meromorphic function. 
In addition, we have the following. 
\begin{itemize}
\item $L_k(0)=1$, $L_k(1)=|{\rm Cl}^0(k)|$, \ \ 
\item $L_k(t)=q^{g(k)}t^{2g(k)}L_k(\dfrac{1}{qt})$, 
\item $L_k(t)=\prod_{i=1}^{2g(k)}(1-\alpha_i t)$ for some algebraic integers $\alpha_i$ with $|\alpha_i|=\sqrt{q}$. 
\end{itemize}
If we write $L_k(t)=a_{2g(k)}t^{2g(k)}+\cdots + a_1t +a_0$, then we have $a_0=1$, $a_1=|\mca{E}^1_k|-(q+1)$, 
$a_{2g(k)}=q^{g(k)}$, and $a_{2g(k)-i}=q^{g(k)-i}a_i$ for $0\leq i\leq 2g(k)$. 
If $k_n/k$ be a constant extension of degree $n$, then we have $k_n=k\,\F(k_n)$, $g(k_n)=g(k)$, $L_{k_n}(t)=\prod_{i=1}^{2g(k)}(1-\alpha_i^n t)$, and hence 
\[|{\rm Cl}^0(k_n)|=L_{k_n}(1)=\prod_{i=1}^{2g(k)}(1-\alpha_i^n)={\rm Res}(t^n-1, t^{2g(k)}L_k(1/t)).\]
For any prime number $l'\neq l$, 
\emph{the Frobenius polynomial} $F_k(t)$ of $k$ is defined as 
the characteristic polynomial of the geometric Frobenius action on the $l'$-adic \'{e}tale cohomology of the algebraic curve corresponding to $k$ and satisfies $F_k(t)=t^{2g(k)}L_k(1/t)$ (cf.~\cite{AubryPerret2004FFA}). 
Hence we obtain the following (See also \cite[Section 2]{Kisilevsky1997PJM}). 

\begin{prop} \label{FoxWeberFF} 
Let $k$ be a function field and $k_n/k$ a constant extension of degree $n$. Then 
\[|{\rm Cl}^0(k_n)|=|{\rm Res}(t^n-1,F_k(t))|.\]
\end{prop}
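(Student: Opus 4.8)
The plan is to deduce the identity directly from the chain of equalities already assembled above for a constant extension $k_n/k$ of degree $n$, whose only missing link is the defining relation $F_k(t)=t^{2g(k)}L_k(1/t)$. Recall that the discussion preceding the proposition records
\[|{\rm Cl}^0(k_n)|=L_{k_n}(1)=\prod_{i=1}^{2g(k)}(1-\alpha_i^n)={\rm Res}\bigl(t^n-1,\,t^{2g(k)}L_k(1/t)\bigr);\]
substituting $F_k(t)=t^{2g(k)}L_k(1/t)$ into the last term is what produces the asserted resultant, so the real work is only to check that the two sides match as integers, signs included.

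First I would rewrite the Frobenius polynomial in terms of the numbers $\alpha_i$. From $L_k(t)=\prod_{i=1}^{2g(k)}(1-\alpha_i t)$ one computes
\[F_k(t)=t^{2g(k)}L_k(1/t)=t^{2g(k)}\prod_{i=1}^{2g(k)}\Bigl(1-\frac{\alpha_i}{t}\Bigr)=\prod_{i=1}^{2g(k)}(t-\alpha_i),\]
so $F_k(t)$ is monic of even degree $2g(k)$ with roots $\alpha_i$. Feeding this into the resultant formula ${\rm Res}(t^n-1,f(t))=(-1)^{n\,{\rm deg}f}a_0^n\prod_i(\alpha_i^n-1)$ with $a_0=1$ and ${\rm deg}\,F_k=2g(k)$ gives
\[{\rm Res}(t^n-1,F_k(t))=(-1)^{2ng(k)}\prod_{i=1}^{2g(k)}(\alpha_i^n-1)=\prod_{i=1}^{2g(k)}(\alpha_i^n-1).\]

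It then remains to reconcile this with $|{\rm Cl}^0(k_n)|=\prod_{i=1}^{2g(k)}(1-\alpha_i^n)$. Because there are exactly $2g(k)$ factors, an even number, flipping the sign in every factor leaves the product unchanged, so $\prod_i(1-\alpha_i^n)=(-1)^{2g(k)}\prod_i(\alpha_i^n-1)=\prod_i(\alpha_i^n-1)$, and equality of the two expressions follows. Since the left-hand side is a positive integer the resultant is in fact positive, so the absolute value in the statement (kept for uniformity with Fox--Weber's formula, Proposition~\ref{prop.FoxWeber}) is harmless.

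The genuinely substantive ingredients---Hasse--Weil rationality of $\zeta_k$, the invariance $g(k_n)=g(k)$ together with $L_{k_n}(t)=\prod_i(1-\alpha_i^n t)$ under a constant extension, and the identification $F_k(t)=t^{2g(k)}L_k(1/t)$---are all quoted from the cited references and the facts listed just above, so no new difficulty arises. The one point requiring care is the parity of the exponent in the resultant formula: the whole argument hinges on $(-1)^{2ng(k)}=1$, i.e.\ on ${\rm deg}\,F_k=2g(k)$ being even, which is precisely what reconciles the sign convention of the cyclic resultant with the product defining $L_{k_n}(1)$.
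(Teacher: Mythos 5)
Your proof is correct and follows the paper's own route: the paper likewise derives the proposition directly from the displayed chain $|{\rm Cl}^0(k_n)|=L_{k_n}(1)=\prod_i(1-\alpha_i^n)={\rm Res}(t^n-1,t^{2g(k)}L_k(1/t))$ together with the identification $F_k(t)=t^{2g(k)}L_k(1/t)$. Your explicit verification that the even degree $2g(k)$ makes the signs match (and renders the absolute value harmless) is a detail the paper leaves implicit, but it is the same argument.
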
 
This formula may be seen as an analogue of Fox--Weber's formula (\Cref{prop.FoxWeber}) for a constant extension of a function field. 

In a geometric extension of a function field, the genera may grow (cf.~\cite{KostersWan2018}). 
In the cases of knots, if we remove the assumption that $\Delta_K(t)$ does not vanish on roots of unity, then the 1st Betti numbers grow. 
We may expect further analogies there. 

\subsection{Elliptic curves} We observe the cases of elliptic curves as examples. 
Basic literatures are Silverman \cite{SilvermanAEC2009GTM} and Diamond--Shurman \cite{DiamondShurman2005GTM}.  
Let $E$ be an elliptic curve over a finite field $\F$ and let $k$ denote the function field of $E$, 
that is, we have $k_E={\rm Frac}(\F[x,y]/(E(x,y)))$. 
In addition, let $E(\F)$ denote the Model--Weil group, that is, the union of the set of $\F$-rational points of $E$ and $\{\infty\}$. Write $F_E(t)=F_k(t)$. 
Then, we have $|{\rm Cl}^0(k_E)|=F_{E}(1)=|E(\F)|$. 
Let $\F'/\F$ be a finite extension and let $E_{\F'}$ denote the same elliptic curve with the coefficient field replaced by $\F'$. 
Then we have $k_{E_{\F'}}=\F'k_{E}$ and 
$|{\rm Cl}^0(k_{E_{\F'}})|=F_{E_{\F'}}(1)=|E(\F')|$.
If $\F=\F_l$, then the Frobenius polynomial of $E$ is given by 
\[F_E(t)=t^2-(l+1-|E(\F_l)|)t+l.\]
Write $F_E(t)=(t-\alpha)(t-\beta)$. 
For each $e\in \Z_{>0}$ and $n\in \Z_{\geq 0}$, 
write  $E_{\F_{l^{ep^n}}}=E_{l^{ep^n}}$. 
Then, 
\[F_{E_{l^{ep^n}}}(t)=(t-\alpha^{e p^n})(t-\beta^{ep^n})\]
coincides with the Frobenius polynomial of the constant $\Z/p^n\Z$-extension $k_{E_{l^{ep^n}}}$ of $k_{E_{l^e}}$. 

In what follows, we first examine a fixed elliptic curve over $\F_5$ for $p=2,3,5$ and $e=1,3$ and raise a question. 
Secondly, we study conditions for $\lim_{n\to \infty} |{\rm Cl}^0(k_{E_{l^{p^n}}})| \in \Z$ with focus on the cases with $p=l$. 
Finally, we discuss the Iwasawa $\nu$-invariants for the cases with $\lim_{n\to \infty}|{\rm Cl}^0(k_{E_{l^{p^n}}})|=0$. 

\subsubsection{Observations} 
We examine the cases of $E:y^2=x^3+3x+3$, $l=5$, $e=1,3$, $p=2,3,5$. 
\begin{eg} \label{eg.q=5}
Let $l=5$ and $E:y^2=x^3+3x+3$. 
Then $|E(\F_5)|=F_{E_5}(1)=5$ and 
$F_{E_5}(t)=t^2-(5+1-5)t+5=t^2-t+5$. 

If $p=2$, then by $t^2-t+5\equiv (t^2+t+1)=\Phi_3(t)$ mod 2 and $F_{E_5}(-1)=5>0$,  
we have $\lim_{n\to \infty} |{\rm Cl}^0(E_{5^{2^n}})|=\Phi_3(1)=3$. 

If $p=3$, then by  $t^2-t+5\equiv t^2+2t+2$ mod 3, we have 
$\lim_{n\to \infty} |{\rm Cl}^0(E_{5^{3^n}})|=1-\frac{-1+\sqrt{-1}}{2}=\frac{3-\sqrt{-1}}{2}$, 
where $\zeta=\frac{-1+\sqrt{-1}}{2}$ is a primitive $8$-th root of unity 
with $\zeta+\ol{\zeta}\equiv -2$, 
$\zeta\ol{\zeta}\equiv 2$ mod 3. 

If $p=5$, then by $F_{E_5}(1)=5$,  we have $\lim_{n\to \infty} |{\rm Cl}^0(E_{5^{5^n}})|=0$ in $\Z_5$. 
Let $\alpha=\frac{1+\sqrt{-19}}{2}$ denote the larger root of $F_E(t)=t^2-t+5$, so that $|\alpha|_l$ holds. 
Since $\alpha-1$ is the smaller root of $E_F(1+t)=t^2+t+5$, we see $|\alpha-1|_5=5^{-1}<5^{-1/4}$.  
Thus, we have $\lambda=\nu=1$ 
and the value $|{\rm Cl}^0(E_{5^{5^n}})_{\text{non-}5}|=|{\rm Cl}^0(E_{5^{5^n}})|\, 5^{-(n+1)}$ converges to a non-zero value 
$\lim_{n\to \infty}{\rm Res}(t^{5^n}-1,t^2-t+5) 5^{-(n+1)}
=\lim_{n\to \infty} \frac{1-\alpha^{5^n}}{5^{n+1}}
=\frac{-\log \alpha}{5}=\frac{-1}{5}\log \frac{1+\sqrt{-19}}{2};$
\begin{center} 
\begin{tabular}{c|ccccccccc}
$n$&1&2&3&4&5&6&$\cdots$\\
\hline 
${\rm Res}(t^{5^n}-1,F_{E_5}(t))\,5^{-(n+1)}$
&$11^2$&{\scriptsize $11^2\times19704014845201$} &$\cdots$&&
{\scriptsize }&{\scriptsize }& $\cdots$\\
\hline 
mod $5^n$&1&21&71&321&1571&14071&$\cdots$
\end{tabular}.
\end{center} \ 
\end{eg}

\begin{eg} Let the notation be as in \Cref{eg.q=5} and put $q=5^3$ for instance. 
Then $F_{E_{5^3}}(t)=(t-\alpha^3)(t-\beta^3)=t^2+14t+125$, 
$F_{E_{5^{3p^n}}}(t)=(t- \alpha^{3p^n})(t- \beta^{3p^n})$, and hence  
$|{\rm Cl}^0(E_{5^{3p^n}})|=F_{E_{5^{3p^n}}}(1)={\rm Res}(t^{p^n}-1,t^2+14t+125)$. 
Note that $F_{E_{5^3}}(1)=140$. 

If $p=2$, then by $2\mid 140$, we have $\lim_{n\to \infty} |{\rm Cl}^0(E_{5^{3\cdot 2^n}})|=0$ in $\Z_2$. 
Since $t^2+14t+125\equiv t^2+1=(t-1)^2$ mod 2, we have $\lambda=2$ and 
$|{\rm Cl}^0(E_{5^{3\cdot 2^n}})|2^{-2n}$ converges to a non-zero value in $\Z_2$. 
Since $|\alpha^3-1|_2<1$, $|\beta^3-1|_2<1$, and $|(\alpha^3-1)(\beta^3-1)|_2=|140|_2=2^{-2}\geq (2^{-1})^2$, 
we may have $\nu>2$. 
In fact, we have $\nu=4$; 
If we put $\varepsilon=\alpha^3-1$ or $\beta^3-1$, then a direct calculation shows that $|\varepsilon|_2=|\varepsilon^2/2|_2=1/2$, 
$|\varepsilon-\varepsilon^2/2|_2=|\varepsilon^4/4|_2=1/4$, 
$|\varepsilon-\varepsilon^2/2-\varepsilon^4/4|_2=1/4$, while other terms satisfy $|\varepsilon^n/n|_2<1/4$. 
Thus, we have $|\log \alpha^3|_2=|\log \beta^3|_2=2^{-2}$, $\nu=4$, and 
$\lim_{n\to \infty}|{\rm Cl}^0(E_{5^{3\cdot 2^n}})_{\text{non-}2}|=2^{-4}(\log \alpha^3)(\log \beta^3)
=\frac{9}{16}(\log \alpha)(\log \beta)$, 
where $\log$ denotes the 2-adic logarithm extended to $\C_2$ so that $\log 2=0$; 
\begin{center} 
\begin{tabular}{c|cccccccccccc} 
$n$&0&1&2&3&4&5&6&7&8&9&10&$\cdots$\\ \hline 
${\rm Res}(t^{2^n}-1,F_{E_{5^3}}(t))2^{-2n-4}$ &35/4&7&245&{\scriptsize 953785}&
$\cdots$&&&&&&&\\
mod $2^n$&&1&1&1&1&17&17&17&145&401&401&$\cdots$\\
\end{tabular}. 
\end{center} 

If $p=3$, then 
by $3\nmid 140$ and $t^2+14t+125\equiv t^2+2t+2$ mod 3, the limit is the same as the case with $q=5$. 

If $p=5$, then by $5\mid 140$, 
we have $\lim_{n\to \infty} |{\rm Cl}^0(E_{5^{3\cdot 5^n}})|=0$. 
By $|\alpha^3-1|_5=|F_{E_{5^3}}(1)|=5^{-1}<5^{-1/4}$ and $|\beta^3-1|_5=1$, 
we have $\lambda=\nu=1$, and the value $|{\rm Cl}^0(E_{5^{3\cdot 5^n}})_{\text{non-}5}|=|{\rm Cl}^0(E_{5^{3\cdot 5^n}})|5^{-(n+1)}$ 
converges to a non-zero value 
$\frac{-\log \alpha^3}{5}=\frac{-3\log \alpha}{5}=\frac{-3}{5}\log \frac{1+\sqrt{-19}}{2}$. 
\end{eg}

The following question is an analogue of \Cref{q.knots} for elliptic curves; 
\begin{q} \label{q.elliptic} 
Consider the constant $\Z/n\Z$-extensions of the function field of an elliptic curve over $\F_l$. 

(1) Find all cases with $\lim=\lim_{n\to \infty} |{\rm Cl}^0(k_{E_{l^{p^n}}})| \in \Z$. 
Find conditions for the limits being specific values, especially for the case with $\lim=1$. 

(2) Find conditions of $(e,p)$ with $p\nmid e$ such that $\lim_{n\to \infty} |{\rm Cl}^0(k_{E_{l^{ep^n}}})|=0$ holds, and study the values of $\nu$. 
Can $\nu$ be arbitrarily large, while $|{\rm Cl}^0(k_{E_{l^{e}}})|$ being small? 
\end{q} 

\subsubsection{Cases with $\lim |{\rm Cl}^0(k_{E_{l^{p^n}}})| \in \Z$} \label{sss.EC.liminZ} 
First, we focus on the cases with $p=l$.

\begin{eg} Let $E:y^2=x^3-1$ with good reduction at $l\neq 2,3$. 
Let us investigate the $\Z_l$-extension of $k_E$. 
We have $F_E(t)=t^2-(l+1-|E(\F_l)|)t+l$. 
By Hasse's bound $|E(\F_l)|-(l+1)|\leq 2\sqrt{l}$, we have $l\mid (l+1-|E(\F_l)|)$ if and only if $l+1-|E(\F_l)|=0$. 
By \cite[Exercise 8.3.6]{DiamondShurman2005GTM}, we have $|E(\F_l)|\equiv 1$ if and only if $l\equiv 2$ mod 3. 
Thus, if $l\equiv 2$ mod 3, then both roots of $F_E(t)=t^2+l$ are smaller than 1, and hence $\lim_{n\to \infty}|{\rm Cl}^0(k_{E_{l^{l^n}}})|=1$. 
If instead $l\equiv 1$ mod 3, then the larger root $\alpha$ of $F_E(t)$ satisfies $|\alpha|_l=1$ and 
$\lim_{n\to \infty}|{\rm Cl}^0(k_{E_{l^{l^n}}})|=1-\zeta$ holds for the $l$-prime-th root of $\zeta$ with $|\alpha-\zeta|_l<1$. 
We have $\zeta=1$ if and only if 
$|E(\F_l)|\equiv 1-(-1)^{(l-1)/6}\binom{(l-1)/3}{(l-1)/2}\equiv 0$ mod $l$, which is in fact not the case by \cite{Olson1976JNT}. 
\end{eg} 

\begin{defn}[cf.\cite{Mazur1972Invent,LangTrotter1976LNM,DiamondShurman2005GTM}]
Let $E$ be an elliptic curve over $\Q$ with good reduction at $\F_l$. 
If $|E(\F_l)|\equiv 1$ mod $l$, then $l$ is called a \emph{supersingular prime} of $E$. 
If $|E(\F_l)|\equiv 0$ mod $l$, then $l$ is called an \emph{anomalous prime} of $E$. 
We say that $E$ has \emph{complex multiplication} (\emph{CM}) over $\Q(\sqrt{D})$ with $D<0$ 
if ${\rm End}(E)$ is isomorphic to an order of the ring of integers of $\Q(\sqrt{D})$. 
\end{defn} 

Put $a=1+l-|E(\F_l)|$. 
Since $F_E(t)=t^2-at+l\equiv t(t-a)$ mod $l$, 
we may write $F_E(t)=(t-\alpha)(t-\beta)$ with $|\alpha|_l\leq 1$ and $|\beta|_l< 1$. 
If $|\alpha|_l<1$, then we have $\lim_{n\to \infty} {\rm Res}(t^{l^n}-1,F_E(t)) =0$.
If $|\alpha|_l=1$, then we have $\lim_{n\to \infty} {\rm Res}(t^{l^n}-1,F_E(t)) =1-\xi$, 
where $\xi$ is the unique root of unity with order prime to $p$ satisfying $|\alpha-\xi|_l<1$.
We see that $\xi\in \Z$ if and only if $\xi=\pm1$. 
Therefore, if we have $\lim_{n\to \infty} {\rm Res}(t^{l^n}-1,F_E(t)) \in \Z$, 
then we have $\lim_{n\to \infty} {\rm Res}(t^{l^n}-1,F_E(t))=0,1,2$ in $\Z_l$ according as $t-a\equiv t-1,$ $t,$ $t+1$ mod $l$ and $|E(\F_l)|\equiv 0$, $1$, $2$ respectively. 
By Hasse's bound $|a|\leq 2\sqrt{l}$, we always have $F_E(1)\geq 0$ and $F_E(-1)\geq 0$. 
Hence these limits coincide with $\lim_{n\to \infty}|{\rm Cl}^0(k_{E_{l^{l^n}}})|$ for any $l$. 
Thus, we obtain the following.

\begin{prop} \label{prop.EC} 
Let $E$ be an elliptic curve over $\Q$ with good reduction at $\F_l$. 
If the limit $\lim_{n\to \infty} |{\rm Cl}^0(k_{E_{l^{l^n}}})|$ in $\Z_{l}$ is a rational integer, then  
$\lim_{n\to \infty} |{\rm Cl}^0(k_{E_{l^{l^n}}})|=0,1,2$. Moreover, 

$\bullet$ We have $\lim_{n\to \infty} |{\rm Cl}^0(k_{E_{l^{l^n}}})|=0$ in $\Z_l$ if and only if $l$ is an anomalous prime, that is, $|E(\F_l)|\equiv 0$ mod $l$ holds. We mostly have $\nu=1$ and the only exceptional cases are those with $l=2$, $|E(\F_l)|_2=1/2$, and $\nu=2$ {\rm (see Propositions \ref{prop.classify.elliptic} and \Cref{eg.p=l})}.  

$\bullet$ We have $\lim_{n\to \infty} |{\rm Cl}^0(k_{E_{l^{l^n}}})|=1$ in $\Z_l$ if and only if 
$l$ is a supersingular prime of $E$, that is, $|E(\F_l)|\equiv 1$ mod $l$ holds. 
If in addition $E$ has CM over $\Q(\sqrt{D})$ with $D<0$, 
then $\lim_{n\to \infty} |{\rm Cl}^0(k_{E_{l^{l^n}}})|=1$ in $\Z_l$ if and only if 
the Legendre symbol satisfies $(\frac{D}{l})\neq -1$. 

$\bullet$ We have $\lim_{n\to \infty} |{\rm Cl}^0(k_{E_{l^{l^n}}})|=2$ if and only if $|E(\F_l)|\equiv 2$ mod $l$ holds. 
\end{prop} 

Let $E$ be an elliptic curve over $\Q$ and put $a_l=1+l-|E(\F_l)|$. 
If $l\geq 5$, then the Hasse bound $|a_l|\leq 2\sqrt{l}$ yields the following table of $\lim=\lim_{n\to \infty} |{\rm Cl}^0(k_{E_{l^{l^n}}})|$. 
\begin{center}
\begin{tabular}{|c|c|c|c|} \hline 
$|E(\F_l)|$&$a_l$&$\lim$&$l$\\ \hline \hline 
$l$&$1$&$0$&anomalous\\  \hline 
$l+1$&0&1&supersingular\\ \hline 
$l+2$&$-1$&$2$&--\\ \hline 
\end{tabular} 
\end{center} 
Elkies \cite{Elkies1987Invent} proved that every $E$ has infinitely many supersingular primes. 
Mazur asked in \cite[Section 1, b)]{Mazur1972Invent} 
whether there are infinitely many anomalous primes, 
which is now proved in the affirmative for most cases \cite[Corollary 4.3]{BabinkostovaBahrKimNeymanTaylor2019JNT}. 
Primes with $a_l\equiv -1$ 
also appears in \cite[Remark 2.6]{BabinkostovaBahrKimNeymanTaylor2019JNT}. 
In addition, Namba--Sato and Serre--Tate's 
conjecture in the 1960s and Lang--Trotter's refinement expect the following: 
Let $N_E$ denote the conductor of $E$, let $r\in \Z$, and assume additionally $r\neq 0$ if $E$ is CM.
Then, $\pi_{E,r}(x)=\{l \mid l<x,\, a_l=r,\, l\nmid N_E\}$ $\sim C_{E,r}\sqrt{x}/\log x$ for a constant $C_{E,r}\geq 0$ (cf.~\cite{LangTrotter1976LNM,WanXi2021-arXiv}). 
Hence, by the prime number theorem $\pi(x)=\{l \mid l<x\}\sim x/\log x$ and the Hasse bound,  
the density of each of such primes in the set of all primes is expected to be zero, assuming $r\neq 0$ if $E$ is CM.

\begin{eg} Let $E:y^2=x^3-5$. Then $l=37$ is known to be 
an anomalous prime of $E$. 
We have $F_E(t)=t^2-t+37\equiv t(t-1)$ mod 37 and hence $\lim_{n\to \infty} |{\rm Cl}^0(k_{E_{l^{l^n}}})|=0$. 
Put $\alpha=\frac{1+\sqrt{-147}}{2}$ and $\beta=\frac{1-\sqrt{-147}}{2}$ so that we have $F_E(t)=(t-\alpha)(t-\beta)$ with $1/l=|\beta|_l<|\alpha|_l=1$. 
Since 
$\alpha-1$ is the smaller root of $F_E(1+t)$ $=t^2+t+37$, we have 
$|\alpha-1|_l=1/l<l^{-1/(l-1)}$ and hence $|\log \alpha|_l=1/l$. 
By \Cref{FoxWeberFF} and \Cref{thm.res},
we have $\lambda=\nu=1$ and 
the value $|{\rm Cl}^0(k_{E_{l^{l^n}}})_{\text{non-}l}|=|{\rm Cl}^0(k_{E_{l^{l^n}}})|\,l^{-(n+1)}$ converges to a non-zero value 
$\lim_{n\to \infty}{\rm Res}(t^{l^n}-1,F_E(t))l^{-(n+1)}$ 
$=\lim_{n\to \infty} \frac{1-\alpha^{l^n}}{l^{n+1}}
=\frac{-\log \alpha}{l}=\frac{-1}{37}\log\frac{1+\sqrt{-147}}{2};$ 
\begin{center} 
\begin{tabular}{c|ccccc}
$n$&0&1&2&3&$\cdots$\\
\hline  
${\rm Res}(t^{37^n}-1,F_E(t))37^{-(n+1)}$ mod $37^n$ &1&1&741&13062 & $\cdots$\\
\end{tabular}.
\end{center}
\end{eg}

Next, let us briefly examine the cases with $p\neq l$. 
Since $F_E(t)$ is monic, the argument becomes much easier than the case of twist knots in \Cref{prop.Atiyah}, yielding the following.  
\begin{prop} \label{prop.Atiyah.elliptic}
If $\lim=\lim |{\rm Cl}^0(k_{E_{l^{ep^n}}})| \in \Z$, then according as 
$F_E(t)=t^2-at+l\equiv (t+1)^2, (t-1)^2, (t+1)(t-1), t^2+t+1, t^2+1, t^2-t+1$ mod $p$, 
we have $\lim =4,0,0,3,2,1$ respectively. 
Especially, $\lim =1$ if and only if $l\equiv 1$ and $a=2-|E(\F_l)|\equiv -1$ mod $p$. 
\end{prop} 

Propositions \ref{prop.EC}, \ref{prop.Atiyah.elliptic} completes the list of the cases with the $p$-adic limit being in $\Z$. 
The cases with $\lim=0$ will be further discussed in below. 

\subsubsection{Can $\nu$ be large with $r_e$ being small?} \label{sss.ec.nu} 
For an elliptic curve $E$ over $\F_l$ with the function field $k$, 
let $k_n/k$ denote the constant $\Z/n\Z$-extension. 
Here we give a slightly systematic study of the Iwasawa $\nu$-invariants and answer the following question. 

\begin{q}[A paraphrase of \Cref{q.elliptic} (2)] 
 \label{q.nu.elliptic} 
For any $N>0$, find a $\Zp$-extension $k_{ep^n}/k_e$ with $p\nmid e$, $|{\rm Cl}^0(k_e)_{(p)}|<p^\nu$, $\nu>N$.  
\end{q} 

Put $F(t)=t^2-(l+1-|E(\F_l)|)t+l$, $a=l+1-|E(\F_l)|$. 
By Hasse's bound, we have $|a|\leq 2\sqrt{l}$. 
Put $r_n={\rm Res}(t^n-1,F(t))$, so that we have $|r_1|=|E(\F_l)|$, $|r_{n}|=|{\rm Cl}^0(k_n)|$. 
If we write $F(t)=(t-\alpha)(t-\beta)$, then we have $r_n=1+l^n-(\alpha^n+\beta^n)$. 
A similar argument to \Cref{prop.nu>0} yields the following. 

\begin{prop} \label{prop.nu>0.elliptic} 
For any $e\in \Z_{>0}$, the following conditions are equivalent. 
\begin{itemize}
\item $\lim_{n\to \infty}|{\rm Cl}^0(k_{ep^n})|=0$ in $\Zp$ 
\item $|{\rm Cl}^0(k_e)|\equiv 0$ mod $p$
\item $(k_{ep^n}/k_e)_n$ has $\nu>0$. 
\end{itemize}
Furthermore, except for the following special cases, we have $p^\nu=|{\rm Cl}^0(k_e)_{(p)}|$. 
\begin{itemize} 
\item $p=3$, $|r_e|_3=1/3$. 
\item $p=2$, $|r_e|_2=1/2,1/4$. 
\end{itemize} 
\end{prop}

Since $F(t)$ has less symmetricity than the $\Delta_K(t)$ of twist knots, 
we have slightly more exceptional cases than in \Cref{prop.nu>0}. 
More precisely, we have the following. 

\begin{prop} \label{prop.classify.elliptic} 
{\rm (1)} If $p\nmid l^e-1$, then $F_e(t)$ has just one root which is close to 1, and the only exceptional cases are $p=l=2$, $|r_e|_2=1/2$, $\nu=2$. 

{\rm (2)} If $p\mid l^e-1$, then $F_e(t)$ has two roots $\alpha^e$, $\beta^e$ close to 1. The only exceptional cases are $p=2$, $|r_e|_2=1/2$, $1/4$ and $p=3$, $|r_e|_3=1/3$. 

{\rm (i)} If in addition $|r_e+(l^e-1)|_p\leq |r_e|_p^{1/2}$, then we have $|\alpha^e-1|_p=|\beta^e-1|_p$. 

{\rm (ii)} If instead $|r_e+(l^e-1)|_p> |r_e|_p^{1/2}$, then then we have $|\alpha^e-1|_p\neq |\beta^e-1|_p$. 
\end{prop} 

\begin{proof} If we put $F_n(t)=(t-\alpha^n)(t-\beta^n)$, then we have $r_n=F_n(1)$, 
$F_n(t)=t^2-(\alpha^n+\beta^n)t+l^n=t^2-(l^n+1-r_n)t+l^n$, 
$F_n(t+1)=(t-(\alpha^n-1))(t-(\beta^n-1))=t^2+(2-\alpha^n-\beta^n)t+(\alpha^n-1)(\beta^n-1)=t^2+(r_n-(l^n-1))t+r_n$. 
Thus, whether $p\mid l^n-1$ or not determines the number of roots of $F_m(t)$ that are close to 1.

Suppose $p\mid l^e-1$, so that $|1-\alpha^e|_p<1$ and $|1-\beta^e|_p<1$. 
If $|1-\alpha^e|_p=|1-\beta^e|_p$, then 
$|r_e-(l^e-1)|_p=|(1-\alpha^e)+(1-\beta^e)|_p\leq |1-\alpha|_p=|r_e|^{1/2}$. 
If instead $|1-\alpha^e|_p>|1-\beta^e|_p$, then by $r_n=(1-\alpha^e)(1-\beta^e)$, we have 
$|r_e-(l^e-1)|_p=|(1-\alpha^e)+(1-\beta^e)|_p=|1-\alpha|_p>|r_e|^{1/2}$. 

Most part of the assertion is proved by a similar argument to \Cref{prop.nu>0}. 
\Cref{eg.p=l} completes the assertion (1). 
\Cref{eg.2-root.nugg0} completes the assertion (2). 
\end{proof} 

Let us study the exceptional cases in \Cref{prop.classify.elliptic} (1). 

\begin{eg} \label{eg.p=l} 
Suppose $p=l$ (so that $p\nmid l^e-1$). If $p=2$ and $|r_1|_2=1/2$, then we have $\nu=2$. 
If otherwise, we have $\nu=1$. 
\end{eg} 

\begin{proof} 
By $F_e(t+1)\equiv t^2-(l^n+1)t\not\equiv t^2-t$, $F_e(t)$ has just one root $\alpha^e$ such that $|r_1|=|\alpha^e-1|_p<1$. 
If $p>3$, then by $|r_e|_p\leq 1/p<1/p^{1/(p-1)}$, we have $|r_e|_p=|\alpha^e-1|_p=|\log \alpha|_p=p^{-\nu}$. 
If $p=2$ and $4\mid r_e$, then by $|r_e|_2\leq 1/4<1/2$, we have the same. 
Note that if $2\nmid e$, then $r_e/r_1$ is the square of an integer. Hence if $p=2$ and $|r_e|_2=1/2$, then we have $|r_e|_2=|r_1|_2=1/2$. 
If $p=l=2$ and $e=1$, then by Hasse's bound $|a|\leq 2\sqrt{2}$ and that $r_1=F(1)=3-a$, we have $|3-a|_1=1/2$, and hence $a=1$. 
In this case we have $F(t)=t^2-t+2$ and $\nu=2>1$. 
For instance, $E:y^2+xy-x^3-x=0$ is such a case. 
In addition, we have $F_3(t)=t^2+5t+8$ and $(r_{3^1 2^n})_n$ also have $\nu=2$. 
\end{proof} 

We next study exceptional cases in \Cref{prop.classify.elliptic} (2) (i)  with $e=1$. 
Note that $\lambda=2$. 

\begin{eg} \label{eg.2-root.nugg0} 
Let $p=2$ and $2\mid l-1$. \ 
%

$\bullet$ Let $|r_1|_2=|l+1-a|_2=1/2$. 
For instance, let $l=b 2^{1+c}+1$, $2\nmid b$, $b,c\in \Z_{\geq 0}$. 
Note that such a prime number exists for arbitrary large $c$ by Dirichlet's theorem on arithmetic progressions. 
If $a=0$, then we have $r_1=F(1)=2(b2^c+1)$, $r_4=2^{2c+4}b^2(b2^c+1)^2$, $v_2(r_4)=2c+4$. 
By $\lambda=2$, \Cref{prop.nu>0.elliptic} yields  $\nu=v_2(r_4)-2\cdot 2=2c$, while $|r_1|_2=1/2$. 

$\bullet$ Let $|r_1|_2=|l+1-a|_2=1/4$. 
For instance, if $l=b 2^{c+2}+1$, $2\nmid b$, $b,c\in \Z_{\geq 0}$, $a=-2$, then 
$v_2(r_1)=2$,  $v_2(r_4)=2c+6$, $\nu= 2c+6-4=2c+2$. 

$\bullet$ 
If $|r_1|_2=1/2^d$ with $d>2$, then $\nu$ is determined by $r_1$.
\end{eg}

\begin{eg} \label{eg.2-root.nugg0'} 
Let $p=3$. If $3\mid l-1$, 
then we have $|r_1|_3=|l+1-a|_3=1/3$.  
For instance, if $l=b 3^{2+c}+1$ with $3\nmid b$, $b,c\in\Z_{\geq 0}$ and $a=2$, then 
$v_3(r_{3^3})=c+8$, $\nu= c+8-6=c+2$. 
\end{eg}

Examples \ref{eg.2-root.nugg0} and \ref{eg.2-root.nugg0'} answer \Cref{q.nu.elliptic}. 
Cases in (2) (ii) may be treated similarly. 

\begin{rem} For any elliptic curves over $\F_l$, we have $\nu \geq 0$, as in the cases of knots. 
We wonder if there is an analogous situation to the cases of links with arbitrary negative $\nu$ (cf.~\!\Cref{rem.nu-links}). 
A systematic study of the Iwasawa $\nu$-invariants of number fields may be found in Sumida-Takahashi's works \cite{SumidaTakahashi2004JNT, SumidaTakahashi2007MC}. 
\end{rem} 

\begin{rem} We may study $\Zp$-covers of a 3-manifold and constant $\Zp$-extensions of a function field 
as subcovers or subextensions of the $\wh{\Z}$-cover or the $\wh{\Z}$-extension in a parallel manner. 
We expect further interactions between these objects.  
\end{rem}


\bibliographystyle{amsalpha}
\bibliography{UekiYoshizaki.pAdicLimit.arXiv.bbl}

\ \end{document}